\definecolor{MyColor}{HTML}{0047AB}
\theoremstyle{definition}
\newtheorem{anytheorem}{Theorem}[section] 
\newtheorem{definition}[anytheorem]{Definition}
\newtheorem{theorem}[anytheorem]{Theorem}
\newtheorem{corollary}[anytheorem]{Corollary}
\newtheorem{lemma}[anytheorem]{Lemma}
\newtheorem{assumption}[anytheorem]{Assumption}
\DeclareMathOperator*{\argmin}{arg\,min}
\newcommand{\E}{\ensuremath{\mathbb{E}}}  
\newcommand{\KL}{\text{KL}}
\def\cO{\mathcal{O}}
\title{Mirror descent for constrained stochastic control problems}
\author{
Deven Sethi
\and 
David \v{S}i\v{s}ka
}
\begin{document}

\begin{abstract}
Mirror descent is a well established tool for solving convex optimization problems with convex constraints.
This article introduces continuous-time mirror descent dynamics for approximating optimal Markov controls for stochastic control problems with the action space being bounded and convex.
We show that if the Hamiltonian is uniformly convex in its action variable then mirror descent converges linearly while if it is uniformly strongly convex relative to an appropriate Bregman divergence, then the mirror flow converges exponentially.
The two fundamental difficulties that must be overcome to prove such results are: first,
the inherent lack of convexity of the map from Markov controls to the corresponding value function.
Second, maintaining sufficient regularity of the value function and the Markov controls along the mirror descent updates.
The first issue is handled using the performance difference lemma, while the second using careful Sobolev space estimates for the solutions of the associated linear PDEs.
\end{abstract}
\maketitle

\section{Introduction}
Stochastic control problems in continuous space and time have widespread applications across fields such as engineering, finance, and economics, see \cite{fleming2012deterministic, krylov2008controlled, pham2009continuous}. 
This paper considers
first order (gradient based) optimization methods for
finite-time horizon stochastic control problems with a bounded state space $\cO\subset\mathbb{R}^d$ and a bounded convex action space $A\subset\mathbb{R}^p$ with no control in the diffusion.
We aim to minimize the value function 
\begin{align*}
V^u(t,x)= \E^u_{t,x}\bigg[\int_t^{\min\{T,\tau_\cO\}}f^{u(r,X_r)}(r,X_r)+\tau\rho^{u(r,X_r)}(r,X_r)dr+g(X_{\min\{T,\tau_\cO\}})\bigg],
\end{align*}
over Markovian controls, $u:[0,T]\times\cO\rightarrow A$
where $\tau\geq 0$ is a constant, 
$(X_{r})_{r\geq t}$ is a solution to a stochastic differential equation (SDE) corresponding to the control $u$, $\tau_\cO$ is the first exit time of $X_r$ from $\cO$
and  $f,\rho:[0,T]\times\cO\times A\rightarrow(\mathbb{R},\mathbb{R}\cup\{+\infty\})$ are given functions such that for each $(t,x)\in[0,T]\times\cO$ the map $a\mapsto\rho^a(t,x)$ is convex. 

A fundamental challenge to showing convergence of gradient-based methods 
is the non-convexity of the map $u\mapsto V^u$ from the space of Markov controls to extended real numbers even for the problems where the Hamiltonian is convex in the action variable (uniformly in all other variable).
See~\cite[Proposition 2.4]{giegrich2024convergence}.
This is fundamentally different to the case when one optimizes over open-loop controls. 
In this case the Pontryagin optimality principle states that convexity of the Hamiltonian (in state and action) is a sufficient condition for convexity of the overall objective function as a map from open-loop controls to extended real numbers.

Despite this non-convexity gradient-based methods, commonly referred to as policy-gradient-methods (PGMs),
have become an increasingly popular technique for constructing (nearly) optimal controls.
While extensive research has been conducted on PGMs for discrete-time Markov decision processes (MDPs), see \cite{han2016deep, sutton1998reinforcement, mei2020global, agarwal2021theory, pmlr-v162-leahy22a, pmlr-v97-ahmed19a} and references therein, the continuous-time setting remains relatively less explored, see \cite{munos2006policy, reisinger2023linear} 
and \cite{JMLR:v23:21-1387}.

The central idea behind these methods is to construct a 
sequence of controls based on the gradient of the function $u\mapsto V^u$, that is, given an existing Markov control $u_\text{old}$ one hopes to improve it by setting $u_\text{new} = u_\text{old} - \eta \nabla_a H(\cdot,\nabla V^{u_\text{old}},u_\text{old})$.
As the step size $\eta\searrow 0$ one heuristically obtains 
$\tfrac{d}{ds}u_s = -\nabla_a H(\cdot,\nabla V_s,u_s)$.
Here
$V_s:=V^{u_s}$ is the value function corresponding to the control $u_s$.
For avoidance of doubt: a new time axis has been introduced and $s\geq 0$ is the time in the optimization method.
There still remains the original time axis of the first exit time control problem. 

This paper focuses on the setting where $A$ is a convex bounded set in $\mathbb{R}^p$ meaning that the aforementioned update cannot not guarantee $u_s(t,x)\in A$.
A classical technique for solving constrained static optimization problems is the method of Nemirovski and Yudin \cite{nemirovskij1983problem} known as mirror descent.
The key insight of this method is that the gradient step can be done in an unconstrained manner in a dual space, which in our setting will be $\mathbb{R}^p$, and then mapped back to the primal space $A$, via an appropriately chosen mirror map.

In this paper, we consider the following continuous-time mirror flow. 
Let $\psi^*:\mathbb{R}^p\to\mathbb{R}$ be a function satisfying $\nabla \psi^*(\mathbb{R}^p) = A$. 
This gives us the mirror map $\nabla \psi^\ast$.
Fix an initial condition $Z_0 \in B_b(\cO_T; \mathbb{R}^p)$, the space of bounded measurable functions from $(0,T) \times \cO$ to $\mathbb{R}^p$. 
On $B_b(\cO_T;\mathbb{R}^p)$ the continuous-time mirror flow is given by  
\begin{align}
\label{eqn:grad_flow}
\tfrac{d}{ds}Z_s&=-\nabla_a H(\cdot,\nabla V^{u_s},u_s)\text{ where }u_s = \nabla\psi^*(Z_s) \text{ for }s\geq 0.
\end{align}
Let us now illustrate the main results of this paper by analogy with the static optimization setting.
Let $A\subset\mathbb{R}^p$ be a convex set, assume $H:A\rightarrow\mathbb{R}$ is 
a lower bounded, convex, differentiable function
and fix a map $\psi:\mathbb{R}^p\rightarrow\mathbb{R}\cup\{+\infty\}$ whose Legendre conjugate, denoted by $\psi^*$, satisfies $\nabla\psi^*(\mathbb{R}^p)=A$.
See \cite[Section 26]{rockafellar1970convex} for a comprehensive overview of such maps.
Consider the problem of approximating $a^*\in \argmin_{a\in A}H(a)$.
The continuous time mirror descent  \cite[Section 3.1]{nemirovskij1983problem} 
reads as follows: 
set $a_0=\nabla\psi^*(Z_0)$ for some $Z_0\in\mathbb{R}^p$ and for $s>0$ define 
\begin{align}
\label{eqn:eucli_md}
\tfrac{d}{ds}Z_s = - \nabla H(a_s) \text{ where } a_s =\nabla\psi^*(Z_s).
\end{align}
A simple application of the chain rule shows that $s\mapsto H(a_s)$ is decreasing (non-increasing) whenever $Z$ satisfies \eqref{eqn:eucli_md}. 
Indeed
\begin{equation*}
\begin{split}
\tfrac{d}{ds}H(a_s)&=\nabla H(a_s)\cdot\tfrac{d}{ds}a_s
= \nabla H(a_s)\cdot \nabla^2\psi^*(Z_s)\tfrac{d}{ds}Z_s=-\nabla H(a_s)^\top \nabla^2\psi^*(Z_s)\nabla H(a_s)\leq 0,
\end{split}
\end{equation*}
where the final inequality follows from the convexity of $\psi^*$, see \cite[Corollary 26.4.1]{rockafellar1970convex}.
The equivalent result in our setting is Theorem \ref{thm:decreasing_cost_funct}. 
The main challenge is proving that the chain rule can be applied to the map $s\mapsto V^{u_s}$. 
This relies on delicate PDE regularity estimates for the value functions which are presented in Section \ref{sec:reg_and_pd}.

Convergence can be proved using the Bregman divergence $D_{\psi^*}(Z,Z'):=\psi^*(Z)-\psi^*(Z') - \nabla\psi^*(Z')\cdot(Z-Z')$ which gives access to the Lyapunov function $Z\mapsto D_{\psi^*}(Z,Z^\ast$), see Nemirovski and Yudin~\cite{nemirovskij1983problem}.
Under the convexity assumption that 
there exists a $\lambda\geq 0$ such that for any $a,a'\in A$, 
\begin{align}
\label{eqn:static_convex}
H(a)-H(a')\geq\nabla H(a')\cdot(a-a')+\tfrac{\lambda}{2}D_\psi(a,a'),
\end{align}
it follows that 
\begin{equation}
\label{eqn:lyap_static}
\begin{split}
\tfrac{d}{ds}D_{\psi^*}(Z_s,Z^*)
=\left(a^* - a_s\right)\cdot \nabla H(a_s)\leq H(a^*)-H(a_s)-\tfrac{\lambda}{2}D_\psi(a^*|a_s).
\end{split}
\end{equation}
When~\eqref{eqn:static_convex} holds with $\lambda=0$ integrating the above from $0$ to $S$ yields a linear rate of convergence: for $S\geq0$ we have $
S(H(a_S)-H(a^*))\leq D_{\psi^*}(Z_0,Z^*)$.
If~\eqref{eqn:static_convex} holds with  $\lambda > 0$ and if $D_{\psi^*}(y,y')=D_{\psi}(\nabla\psi(y'),\nabla\psi^*(y))$ and $\lambda>0$ then~\eqref{eqn:lyap_static} leads to exponential convergence.
The analogous result in our setting is Theorem \ref{thm:lin_conv}, which establishes both linear and exponential convergence of the mirror flow under the assumption that there exists a constant
$\lambda\geq 0$ such that 
\begin{align}
\label{eqn:intro_conv_ham}
H(t,x,z,a)-H(t,x,z,a')\geq \nabla_aH(t,x,z,a')\cdot(a-a')+\tfrac{\lambda}{2}D_{\psi}(a,a').
\end{align}
It is worth recalling that even when~\eqref{eqn:intro_conv_ham} holds, 
the map $u\mapsto V^u$ is non-convex as a function from Markov controls to the extended real numbers.
Of course convexity is only a sufficient (not a necessary) condition for convergence of gradient methods but it still makes the result of Theorem \ref{thm:lin_conv} somewhat surprising.
The ability to bypass this non-convexity relies on two key insights. 
First, we establish a performance difference, 
formalized in Lemma \ref{lem:perf_diff}, 
which shows exactly how the convexity breaks down.
Second, we introduce a Lyapunov function~\eqref{eqn:bregman}, which extends the one used in the static case above.

We conclude this introduction by highlighting an important example which falls within our framework. 
The analysis presented in the paper covers the case where the action space is the probability simplex, i.e. $A=\{a\in \mathbb R^N : a_i\geq 0, \sum_{i=1}^N a_i = 1\}$ for some $N\in\mathbb{N}$.
See Example \ref{sec:example_finite_actions} for more detail.
If the action space used in~\cite{sethi2024entropy} is of finite cardinality and $\rho^a(t,x)=\KL(a|a^{(0)})$ for all $(t,x)\in[0,T]\times\cO$ and for some fixed $a^{(0)}\in A$,
then the results on convergence of the mirror descent in~\cite{sethi2024entropy} complements the one studied in this paper. 
The difference being that here we consider a finite-time horizon problem and include other Bregman divergences.

\subsection{Related works}
There are numerous methods for dynamic optimisation. 
While policy iteration methods~\cite{huang2025convergence, jia2023q, kerimkulov2024mirror} are known to converge exponentially they are not always favoured when designing efficient algorithms.
Here we focus on policy gradient methods (PGMs) due to their potential for computational efficiency. 
We are specifically interested in PGMs for Markov controls, rather than open loop controls, as Markov controls are more practical.
As mentioned before, the open-loop setting respects convexity, in the sense that convexity of the unoptimized Hamiltonian leads to convexity of the objective. 
Based on this PGM methods for open-loop controls have been shown to converge~\cite{sethi2024modified,vsivska2024gradient,kerimkulov2024mirror}.

Very recently, convergence of PGMs became relatively well understood in the MDP framework.
For softmax policies with finite state and action spaces  \cite{cayci2024convergence, cen2022fast,lan2023policy,ju2022policy} demonstrate a linear rate of convergence for the value functions for certain PGMs.
Linear convergence is also established for more general regularisers in \cite{zhan2023policy}.
Exponential convergence of mirror descent and its continuous-time counterpart for the entropy regularised MDPs on Polish state and action spaces is established in~\cite{kerimkulov2023fisher}.

For continuous time control problems PGMs are much less understood. 
In~\cite{JMLR:v23:21-1387} an actor-critic PGM algorithm is proposed but without any convergence analysis.
For unconstrained action spaces PGMs are shown to converge at a linear rate using specific structural assumptions~\cite{giegrich2024convergence} and using the specific structure of the LQR control problem~\cite{reisinger2023linear}.
When the action space is constrained updating the control in a PGM is transformed into a constrained optimization problem which can be addressed using mirror descent.
In~\cite{sethi2024entropy} it is demonstrated that  mirror descent converges exponentially for relaxed and  entropy regularized first-exit time problems and moreover the strength of the entropy regularization can be continuously reduced to get convergence to the unregularized control problem, in some specific cases, with linear rate.

\subsection{Our contributions}
We prove linear and exponential convergence along the Euclidean mirror descent flow \eqref{eqn:grad_flow}, Theorem~\ref{thm:lin_conv}.
The main challenges that had to be overcome to achieve this are the following.
\begin{enumerate}[i)]
\item We prove existence and uniqueness of solutions to \eqref{eqn:grad_flow} this is Theorem \ref{thm:grad_flow_well_posed}. 
This requires one to show that the value functions and the Markov controls maintain their regularity along the flow which in turn requires precise Sobolev space regularity estimates for the value functions derived from their parabolic PDE representation, see Section \ref{sec:reg_and_pd}. 
\item We establish that the value function is decreasing (non-increasing) along the flow~\eqref{eqn:grad_flow}, Theorem \ref{thm:decreasing_cost_funct}.
This requires proving Hadamard differentiability of the objective function, Lemma \ref{lem:had_diff}, and thus establishing differentiability along the flow by proving that the appropriate chain rule applies.
\end{enumerate}
Finally, we provide two concrete examples which fit the framework of this paper.
The first covers the case where the actions are restricted to a ball in $\mathbb R^p$.
The second example highlights how the proposed framework captures the entropy-regularized Markovian control problems with finite action spaces. 
These examples are presented in Section \ref{sec:examples}.

\subsection{Notation}
For measurable $D\subset\mathbb{R}^m$ and $E\subset\mathbb{R}^n$ for some $m,n\in\mathbb{N}$, $B_b(D;E)$ represent the space of bounded measurable functions from $D$ to $E$ equipped with the norm $\|u\|_{B_b(D;E)}:=\sup_{x\in D}|u(x)|$, where $|\cdot|$ is the standard Euclidean distance in $\mathbb{R}^n$.
For a given set $A\subset\mathbb{R}^p$, we will write $\text{int}(A)$ for the interior of the set $A$. That is $\text{int}(A)$ is the largest open set in $\mathbb{R}^p$ contained in $A$.
For a given function $\psi:\mathbb{R}^p\rightarrow\mathbb{R}\cup\{+\infty\}$, $\text{dom}(\psi)=\{a\in\mathbb{R}^p:\psi(a)<\infty\}$.

A domain refers to an open connected subset of $\mathbb{R}^d$. 
Let $\mathbb{R}^d_\infty:=[0,\infty)\times\mathbb{R}^d$ and for a domain $\cO\subset\mathbb{R}^d$ and $T\in(0,\infty)$, $\cO_T:=[0,T)\times\cO$.
The parabolic boundary of $\cO_T$ is $\partial\cO_T:=\{(0,T)\times\partial\cO\}\times\{(t,x)\in\{T\}\times\bar{\cO}\}$. 
For a given function $h:\mathbb{R}^d\rightarrow\mathbb{R}$, $\nabla h$ represents the gradient vector and $\nabla^2 h$ the Hessian matrix.
For a bounded domain $\cO\subset\mathbb{R}^d$, 
$W^{2,1}_q(\cO_T)$ denotes the space of functions 
$h:[0,T]\times\cO\rightarrow\mathbb{R}^d$ 
for which $h$ together with the generalized derivatives $\partial_t h$, $(\nabla h)_i$, $(\nabla^{2}h)_{ij}$ are all in $L_q(\cO_T)$ where $1\leq i,j\leq d$.

For a given function $\psi:A\rightarrow\mathbb{R}$, 
the map 
$A\ni a \mapsto D_{\psi}(a|a^0)\in\mathbb{R}\cup\{+\infty\}$ is the Bregman divergence with respect to $a^0\in A$, where $D_\psi(a|a^0)=\psi(a)-\psi(a^0)-\nabla\psi(a^0)\cdot(a-a^0)$.

\section{Problem Formulation and Main Results}
We consider finite-time horizon  stochastic control problems defined on a bounded domain $\cO\subset\mathbb{R}^d$ and with a bounded convex action space $A\subset\mathbb{R}^p$.
Fix $T\in(0,\infty)$, $K>0$, $d,d'\in\mathbb{N}$, $q\in[d+2,\infty)$,  $\kappa>0$, $\tau\geq 0$ and $\psi:\mathbb{R}^p\rightarrow\mathbb{R}\cup\{+\infty\}$.
Let $(b,f):\bar{\cO}_T\times A \rightarrow (\mathbb{R}^d,\mathbb{R})$, $\sigma:\mathbb{R}^d_\infty\rightarrow\mathbb{R}^{d\times d'}$ and $g:\bar{\cO}\rightarrow\mathbb{R}$ be given measurable functions.
Finally let $u^{(0)}:\cO_T\rightarrow A$ be a given reference policy.
\begin{assumption}
\label{ass:SDE_well_posed}
The action space $A\subset \mathbb R^p$ is bounded and convex
and the state space $\cO$ is a bounded domain whose boundary is a $C^2$ manifold.
The running cost $f\in B_b(\cO_T\times A;\mathbb{R})$ and the terminal cost $g\in C^2(\bar{\cO})$.
The function
$\sigma\in B_b(\mathbb{R}^d_\infty;\mathbb{R}^{d\times d'})\cap C(\mathbb{R}^d_\infty;\mathbb{R}^{d\times d'})$ and is Lipschitz continuous on $\bar{\cO}_T$, with the Lipschitz constant given by $K$ and uniformly elliptic with $\kappa > 0$ i.e. for any $(t,x)\in[0,\infty)\times\mathbb{R}^d$ and $\xi\in\mathbb{R}^{d'}$,
$|\sigma(t,x)\xi|^2>\kappa|\xi|^2$. 
Moreover $\psi$ is a strictly convex, continuously differentiable function for which $\text{int}(\text{dom}(\psi))=A$.
Finally $\|b\|_{B_b(\cO_T\times A;\mathbb{R}^d)}+\|\sigma\|_{B_b(\mathbb{R}^d_\infty;\mathbb{R}^{d\times d'})}+\|f\|_{B_b(\cO_T\times A;\mathbb{R})}+\|\psi(u^{(0)})\|_{B_b(\cO_T;\mathbb{R})}\leq K$.
\end{assumption}
For each $u\in B_b(\cO_T;A)$ and $(t,x)\in\cO_T$, $X^{t,x,u}$ corresponds to the unique in law weak solution to
\begin{align}
\label{eqn:controlled_SDE}
X_{t'}=x+\int_{t}^{t'}b(r,X_r,u(r,X_r))dr +\int_{t}^{t'}\sigma(r,X_r)dW^{t,x,u}_r,
\end{align}
where $W^{t,x,u}$ is a $d'$-dimensional Brownian motion defined on a probability space.
The proof of the following theorem is given in Appendix \ref{sec:app_pde_pfs}.
\begin{theorem}
\label{thm:SDE_well_posedness_moment_bound}
Let Assumption \ref{ass:SDE_well_posed} hold and assume $u\in B_b(\cO_T;A)$.
Then for each $(t,x)\in\cO_T$,
\eqref{eqn:controlled_SDE} admits a unique in law weak solution on the interval $[0,T^{t,x,u}_\cO]$ where $T^{t,x,u}_\cO:=\inf\{s\geq t : X_s^{t,x,u}\notin\cO_T\}$.
\end{theorem}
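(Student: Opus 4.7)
The plan is to invoke the classical Girsanov/weak-solution machinery for SDEs with bounded measurable drift and non-degenerate Lipschitz diffusion. Since $u$ is only assumed to lie in $B_b(\cO_T;A)$, the composition $r\mapsto b(r,X_r,u(r,X_r))$ is merely bounded and measurable, which rules out strong existence. The diffusion $\sigma$ however is bounded, continuous, Lipschitz on $\bar\cO_T$, and uniformly elliptic, so the standard drift-transformation approach applies.

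First I would construct the driftless auxiliary SDE. Extending $\sigma$ in a Lipschitz, bounded, uniformly elliptic way to $[0,T]\times\mathbb{R}^d$, one builds a filtered probability space $(\Omega,\cF,(\cF_s),\tilde{\mathbb{P}})$ carrying a $d'$-dimensional Brownian motion $\tilde W$ and a strong solution $Y$ of $dY_s=\sigma(s,Y_s)\,d\tilde W_s$ with $Y_t=x$. Uniform ellipticity ensures $\sigma\sigma^\top$ is invertible with bounded inverse, so
\[\theta(s,y):=\sigma(s,y)^\top\bigl(\sigma(s,y)\sigma(s,y)^\top\bigr)^{-1}b(s,y,u(s,y))\]
is a bounded measurable $\mathbb{R}^{d'}$-valued process and satisfies $\sigma\theta=b$. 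Define the exponential martingale
\[Z_s=\exp\!\left(\int_t^{s}\theta(r,Y_r)\cdot d\tilde W_r-\tfrac{1}{2}\int_t^{s}|\theta(r,Y_r)|^2\,dr\right),\]
which is a true martingale on $[t,T]$ by Novikov's criterion since $\theta$ is bounded. Setting $d\mathbb{P}^{t,x,u}/d\tilde{\mathbb{P}}=Z_T$ and $W^{t,x,u}_s:=\tilde W_s-\int_t^{s}\theta(r,Y_r)\,dr$, Girsanov's theorem yields that $W^{t,x,u}$ is a Brownian motion under $\mathbb{P}^{t,x,u}$ and that $X:=Y$ solves~\eqref{eqn:controlled_SDE}. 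Defining $T^{t,x,u}_\cO:=\inf\{s\geq t:X_s\notin\cO\}\wedge T$ gives a weak solution on $[t,T^{t,x,u}_\cO]$, and since the coefficients past the exit time play no role one can freeze $X$ after $T^{t,x,u}_\cO$.

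For uniqueness in law, I would reverse the Girsanov transformation: given any weak solution $(X',W')$ to~\eqref{eqn:controlled_SDE}, define $\tilde Z'_s$ as the reciprocal exponential martingale built from $-\theta(\cdot,X'_\cdot)$. Under the transformed measure, $\tilde W'_s:=W'_s+\int_t^s\theta(r,X'_r)\,dr$ is a Brownian motion and $X'$ satisfies the driftless SDE $dX'_s=\sigma(s,X'_s)\,d\tilde W'_s$. Since $\sigma$ is Lipschitz, pathwise uniqueness (hence uniqueness in law) holds for this SDE, so the law of $X'$ under the transformed measure, and therefore under the original one, is uniquely determined. Restriction to the stopped interval $[t,T^{t,x,u}_\cO]$ preserves uniqueness because the exit time is a measurable functional of the path.

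The main obstacles are largely bookkeeping: (i) verifying that the boundedness of $b$, $\sigma$, and $\sigma^{-1}$ (the latter from ellipticity) is enough to make the Girsanov density a genuine martingale rather than a local martingale, which is immediate from Novikov; and (ii) correctly handling the exit-time localisation, which is routine because $\cO$ is a bounded $C^2$ domain and $T^{t,x,u}_\cO$ is a stopping time for both the pre- and post-transformation filtrations. The assumption $q\geq d+2$ and the Sobolev hypotheses play no role at this stage --- they are only needed in subsequent sections for the PDE regularity estimates, not for the weak well-posedness of~\eqref{eqn:controlled_SDE}.
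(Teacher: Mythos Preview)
Your proposal is correct but takes a different route from the paper. The paper's proof extends the drift $b^u$ by zero outside $\cO_T$, notes that the extended drift and $\sigma$ are globally bounded with $\sigma$ continuous and uniformly elliptic, and then directly invokes Stroock--Varadhan's well-posedness result for the martingale problem \cite[Theorem~7.2.1]{stroock1997multidimensional}; no explicit Girsanov construction appears. Your approach is more hands-on: you build the weak solution via a Girsanov change of measure from the driftless SDE, using the Lipschitz property of $\sigma$ to obtain a strong solution for the latter and Novikov's criterion to justify the measure change. Both arguments are standard and valid. The paper's version is a one-line citation of a heavier black-box theorem, while yours is more self-contained and makes the role of uniform ellipticity (via the boundedness of $\theta=\sigma^\top(\sigma\sigma^\top)^{-1}b$) explicit. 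One minor difference in hypotheses: your construction genuinely uses the Lipschitz assumption on $\sigma$ (to get pathwise uniqueness for the driftless equation), whereas Stroock--Varadhan requires only continuity of $\sigma$; since Assumption~\ref{ass:SDE_well_posed} supplies both, this is immaterial here.
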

For brevity let us introduce the following notation. 
Define $\rho^\cdot:\cO_T\times A\rightarrow\mathbb{R}\cup\{\infty\}$ by $\rho^a(t,x)=D_\psi(a|u^{(0)}(t,x))$.
In particular for a map $u:\cO_T\rightarrow A$,
we define $(\rho^{u}):\cO_T\rightarrow\mathbb{R}\cup\{+\infty\}$ by 
$\rho^{u}(t,x)=\rho^{u(t,x)}(t,x)$.
The class of admissible controls is taken to be 
\begin{align*}
\mathcal{U}^\tau_\psi :=\left\{u\in B_b(\cO_T;A):\sup_{(t,x)\in \mathcal O_T} \rho^{u}(t,x)<\infty\right\} \text{ for $\tau>0$ and }\mathcal{U}^0_\psi:=B_b(\cO_T;A).
\end{align*}
We aim to minimize the value function 
\begin{align}
\label{eqn:value_funct}
V^u(t,x)=\E^{u}_{t,x}\bigg[
\int_t^{T_\cO}f(r,X_r,u(r,X_r))+\tau \rho^u(r,X_r)dr + g(X_{T_\cO})
\bigg],
\end{align}
over $u\in\mathcal{U}_\psi^\tau$.
For convenience the superscript $t,x,u$ will be placed on the expectation sign to indicate expectations of quantities which depend on $t,x,u$.
For a given $u\in  \mathcal{U}_\psi^\tau$ the on-policy Bellman equation reads as follows

\begin{equation}
\label{eqn:onpolicy_bellman}
\left\{
\begin{aligned}
\frac{\partial v}{\partial t}+\mathcal{L}^{u}v+f^u+\tau \rho^u&=0 \text{ a.e. in }\cO_T\\
v&=g  \text{ on }\partial\cO_T.
\end{aligned}
\right.
\end{equation}
where
for each $a\in A$ the parabolic operator $\mathcal{L}^a:W^{2,1}_q(Q_T)\rightarrow L_q(Q_T)$ is defined by
\begin{align*}
(\mathcal{L}^av)=\frac{1}{2}\text{Tr}(\sigma\sigma^\top \nabla^2v)+b^a\cdot \nabla v.
\end{align*}
The following theorem shows that the value functions satisfy \eqref{eqn:onpolicy_bellman} and provides a $W^{2,1}_q(\cO_T)$ estimate for the solutions.
The proof is given in Appendix \ref{sec:app_pde_pfs}.
\begin{theorem}
\label{thm:pde_well_posedness}
Suppose Assumption \ref{ass:SDE_well_posed} holds.
If  $u\in \mathcal{U}^\tau_\psi$ and $V^u$ is the associated value function
then
$V^u\in W^{2,1}_q(\cO_T)$ and
satisfies 
\eqref{eqn:onpolicy_bellman}.
Moreover there exists a $C>0$ depending on $|\cO|$, $|A|$, 
$d$, $q$, $\kappa$, $K$ and $T$ such that
\begin{align}
\label{eqn:W_2_p_mu_estimate}
\|V^u\|_{W^{2,1}_q(\cO_T)}\leq C(1+\|g\|_{C^2(\cO)}+\tau\|\rho^u\|_{B_b(\cO_T;\mathbb{R})}).
\end{align}
\end{theorem}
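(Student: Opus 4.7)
The plan is to first solve the linear parabolic Dirichlet problem~\eqref{eqn:onpolicy_bellman} in the $W^{2,1}_q$ framework and then identify its solution with $V^u$ via a stochastic representation, both steps being made possible by the condition $q>d+2$ imposed in Assumption~\ref{ass:SDE_well_posed} together with $u\in\mathcal{U}^\tau_\psi$.

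\textbf{Step 1: PDE well-posedness and a priori estimate.} Fix $u\in\mathcal{U}^\tau_\psi$ and set $b^u(t,x):=b(t,x,u(t,x))$, $f^u(t,x):=f(t,x,u(t,x))$. Since $u$ is bounded measurable and $b,f$ are bounded on $\bar\cO_T\times A$, the coefficients $b^u$ and the source term $f^u+\tau\rho^u$ are bounded measurable functions on $\cO_T$; crucially, $\rho^u\in B_b(\cO_T;\mathbb{R})$ by definition of $\mathcal{U}^\tau_\psi$. Together with the uniform ellipticity and Lipschitz continuity of $\sigma$, and the $C^2$-regularity of $\partial\cO$ and of $g$, the operator $\partial_t+\mathcal{L}^u$ is a non-degenerate linear parabolic operator with bounded measurable lower-order coefficients. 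The Dirichlet problem~\eqref{eqn:onpolicy_bellman} with terminal/lateral data $g\in C^2(\bar\cO)\hookrightarrow W^{2,1}_q(\cO_T)$ and bounded right-hand side therefore admits a unique strong solution $v\in W^{2,1}_q(\cO_T)$ by the Ladyzhenskaya--Solonnikov--Ural'ceva parabolic $L_q$-theory (see e.g.\ Krylov, \emph{Controlled Diffusion Processes}, Ch.~II, or Ladyzhenskaya--Solonnikov--Ural'ceva, Ch.~IV). The same theory yields the a priori bound
\begin{equation*}
\|v\|_{W^{2,1}_q(\cO_T)}\leq C\bigl(\|f^u+\tau\rho^u\|_{L_q(\cO_T)}+\|g\|_{W^{2,1}_q(\cO_T)}\bigr),
\end{equation*}
with $C$ depending only on $d,q,\kappa,T,|\cO|$ and the $L_\infty$-bound on $b$ and $\sigma$ (thus on $K$). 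Using $\|f^u\|_\infty\leq K$, $|\cO_T|\lesssim |\cO| T$, and $\|g\|_{W^{2,1}_q(\cO_T)}\lesssim\|g\|_{C^2(\bar\cO)}$, this yields~\eqref{eqn:W_2_p_mu_estimate}.

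\textbf{Step 2: Stochastic representation.} It remains to identify $v$ with $V^u$. Because $q>d+2$, the parabolic Sobolev embedding gives $v\in C(\bar\cO_T)$, and $v$ has generalized derivatives of the type required to apply the It\^o--Krylov formula for non-degenerate diffusions (see Krylov, \emph{Controlled Diffusion Processes}, Ch.~II, Thm.~10) to functions in $W^{2,1}_q$ with $q>d+2$. Fix $(t,x)\in\cO_T$ and let $X=X^{t,x,u}$ solve~\eqref{eqn:controlled_SDE}, which is well-posed up to $T_\cO$ by Theorem~\ref{thm:SDE_well_posedness_moment_bound}. Applying It\^o--Krylov to $v(s,X_s)$ on $[t,T_\cO]$ and using~\eqref{eqn:onpolicy_bellman} to substitute $(\partial_t+\mathcal{L}^u)v=-f^u-\tau\rho^u$ almost everywhere produces
\begin{equation*}
v(T_\cO,X_{T_\cO})=v(t,x)-\int_t^{T_\cO}\bigl(f^u(r,X_r)+\tau\rho^u(r,X_r)\bigr)\,dr+\int_t^{T_\cO}\nabla v(r,X_r)^\top\sigma(r,X_r)\,dW_r.
\end{equation*}
Since $\sigma$ and $\nabla v$ are bounded on $\cO_T$ (the latter by Sobolev embedding since $q>d+2$), the stochastic integral is a true martingale. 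Taking expectations, using the boundary condition $v=g$ on $\partial\cO_T$, and rearranging yields $v(t,x)=V^u(t,x)$, which completes the proof.

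The main obstacle is the last step: the It\^o--Krylov formula applies only because $q>d+2$ ensures both $v\in C(\bar\cO_T)$ and enough integrability of $\nabla^2 v$ along the paths of the non-degenerate diffusion $X^{t,x,u}$. If this threshold were missed one would only obtain $v$ as a viscosity / a.e.\ solution and the probabilistic identification would require extra regularisation arguments.
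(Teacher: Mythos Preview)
Your approach is essentially the same as the paper's: solve the linear parabolic Dirichlet problem in $W^{2,1}_q$, then identify the solution with $V^u$ via the It\^o--Krylov formula. The only minor difference is in how the $W^{2,1}_q$ estimate is obtained: you invoke a version of the parabolic $L_q$ estimate that directly gives $\|v\|_{W^{2,1}_q}\leq C(\|f^u+\tau\rho^u\|_{L_q}+\|g\|_{W^{2,1}_q})$, whereas the paper's Lemma~\ref{lem:parabolic_PDE} carries an extra $\|v\|_{L_q}$ term on the right-hand side, which the paper then absorbs by first identifying $v=V^u$ and bounding $|V^u(t,x)|$ pointwise from its probabilistic definition. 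Both routes are standard and lead to the same conclusion.
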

The optimal value function $V^*:\cO_T\rightarrow\mathbb{R}$ is defined by 
\begin{align}
V^*(t,x)=\inf_{u\in \mathcal{U}^\tau_\psi} V^u(t,x).
\end{align}
Define $H:[0,T]\times\cO\times\mathbb{R}^d\times A\rightarrow\mathbb{R}$ by 
\begin{align*}
H(t,x,z,a)=b(t,x,a)\cdot z + f(t,x,a) +\tau \rho^{a}(t,x).
\end{align*}
The Hamiltonian is defined to be 
\begin{align}
\label{eqn:ham}
\mathcal{H}(t,x,z):=\inf_{a\in A}H(t,x,z,a).
\end{align}
The HJB for this problem is 
\begin{equation}
\label{eqn:hjb}
\left\{
\begin{aligned}
\frac{\partial v}{\partial t}+\frac{1}{2}\text{Tr}(\sigma\sigma^\top \nabla^2v)+\mathcal{H}(\cdot,\nabla v) & = 0  \text{ a.e. in }
\cO_T\,,\\
v& = g  \text{ on }\partial\cO_T\,.
\end{aligned}
\right.
\end{equation}
In order to prove existence and uniqueness of solutions to \eqref{eqn:hjb} in $W^{2,1}_q(\cO_T)$ the following structural condition is placed on the Hamiltonian.
\begin{assumption}
\label{ass:ham_structure}
The map $\cO_T\times\mathbb{R}^d\ni (t,x,z)\mapsto\mathcal{H}(t,x,z)\in\mathbb{R}$ is measurable and
for any $(t,x)\in\cO_T$ and $z\in\mathbb{R}^d$, $|\mathcal{H}(t,x,z)|\leq K(1+|z|)$.
Moreover there exists a measurable function $a^*:\cO_T\times \mathbb{R}^d\rightarrow A$ such that $a^*(t,x,z)\in \argmin_{a\in A}H(t,x,z,a)$.
\end{assumption}

\begin{theorem}
\label{thm:HJB_well_posed}
Suppose Assumptions \ref{ass:SDE_well_posed} and \ref{ass:ham_structure} hold.
Then
$V^*\in  W^{2,1}_q(\cO_T)$ and corresponds to the unique solution of \eqref{eqn:hjb}.
Moreover $u^*(t,x)=a^*(t,x,\nabla V^*(t,x))\in \mathcal{U}_\psi^\tau$ is an optimal control. 
\end{theorem}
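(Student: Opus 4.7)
The plan is to follow the classical verification paradigm: produce a $W^{2,1}_q(\cO_T)$ solution $v$ of \eqref{eqn:hjb}, show $v\leq V^u$ for every admissible $u$ via an It\^o--Krylov expansion, and then exhibit a minimiser realising equality. Note that \eqref{eqn:hjb} is genuinely semilinear: the second-order part $\tfrac{1}{2}\tr(\sigma\sigma^\top\nabla^2\cdot)$ is linear and fixed, and the nonlinearity enters only through $\mathcal{H}(t,x,\nabla v)$. By Assumption \ref{ass:ham_structure}, $\mathcal{H}$ is measurable in $(t,x)$ and satisfies $|\mathcal{H}(t,x,z)|\leq K(1+|z|)$; moreover, since $H$ is affine in $z$ with slope $b^a$ bounded by $K$, a standard infimum argument shows $|\mathcal{H}(t,x,z_1)-\mathcal{H}(t,x,z_2)|\leq K|z_1-z_2|$. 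I would construct $v$ by a Banach fixed-point in $W^{2,1}_q(\cO_T)$ for the map $\Phi:w\mapsto v_w$, where $v_w$ solves the \emph{linear} problem
\begin{equation*}
\partial_t v_w+\tfrac{1}{2}\tr(\sigma\sigma^\top\nabla^2 v_w)+\mathcal{H}(\cdot,\nabla w)=0\text{ in }\cO_T,\qquad v_w=g\text{ on }\partial\cO_T,
\end{equation*}
whose $W^{2,1}_q$ well-posedness follows from classical $L^q$-parabolic theory (a linear-coefficient variant of Theorem \ref{thm:pde_well_posedness}); the $K$-Lipschitz dependence of $\mathcal{H}$ on $z$ combined with the $L^q$ regularity estimate delivers a contraction on a short time slab, which iterates backward in time to give global existence and uniqueness.

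Since $q\geq d+2$, the parabolic Sobolev embedding places $v$ and $\nabla v$ in $C(\bar{\cO}_T)$ with $\|\nabla v\|_\infty$ bounded by $\|v\|_{W^{2,1}_q(\cO_T)}$. Fix $u\in \mathcal{U}_\psi^\tau$ and $(t,x)\in\cO_T$, and apply the It\^o--Krylov formula to $v(\cdot,X^{t,x,u}_\cdot)$ on $[t,T_\cO\wedge T]$. Substituting \eqref{eqn:hjb} for $\partial_t v+\tfrac{1}{2}\tr(\sigma\sigma^\top\nabla^2 v)$ and using the pointwise inequality $\mathcal{H}(\cdot,\nabla v)\leq H(\cdot,\nabla v,u(\cdot))$, then taking expectations and invoking the boundary condition $v=g$ on $\partial\cO_T$, yields
\begin{equation*}
v(t,x)\leq \E^{u}_{t,x}\!\left[\int_t^{T_\cO}\!\big(f(r,X_r,u(r,X_r))+\tau\rho^u(r,X_r)\big)\,dr+g(X_{T_\cO})\right]=V^u(t,x),
\end{equation*}
so that $v\leq V^*$ on $\cO_T$ after taking the infimum in $u$.

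For the reverse inequality, set $u^*(t,x):=a^*(t,x,\nabla v(t,x))$; measurability of $a^*$ and $A$-valuedness give $u^*\in B_b(\cO_T;A)$. Admissibility $u^*\in \mathcal{U}_\psi^\tau$ follows by using the minimality of $u^*$ against the reference policy $u^{(0)}$ and the identity $\rho^{u^{(0)}}\equiv 0$:
\begin{equation*}
\tau\rho^{u^*}(t,x)=H(t,x,\nabla v,u^*)-b^{u^*}\!\cdot\nabla v-f^{u^*}\leq H(t,x,\nabla v,u^{(0)})-b^{u^*}\!\cdot\nabla v-f^{u^*}\leq 2K(1+\|\nabla v\|_\infty).
\end{equation*}
With this choice the pointwise inequality in the preceding It\^o--Krylov calculation becomes equality by the very definition of $a^*$, whence $v(t,x)=V^{u^*}(t,x)\geq V^*(t,x)$. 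Combined with the previous paragraph this forces $V^*=v=V^{u^*}\in W^{2,1}_q(\cO_T)$, establishing both the regularity of $V^*$ and optimality of $u^*$.

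The main obstacle is the existence step: since $\mathcal{H}$ is only measurable in $(t,x)$, classical Schauder/smooth-coefficient schemes are unavailable, and the fixed-point argument has to live entirely in the Sobolev $L^q$ scale. The interplay between the $K$-Lipschitz dependence of $\mathcal{H}$ on $z$ and the $L^q$-estimate for the linearised parabolic operator must be calibrated carefully---either via a short enough time slab or via a suitable weighted $W^{2,1}_q$ norm---so as to produce a genuine contraction. Once existence and uniqueness are in hand, the remaining steps reduce to a careful application of It\^o--Krylov together with the embedding $W^{2,1}_q\hookrightarrow C^1$ available because $q\geq d+2$.
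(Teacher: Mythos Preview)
Your verification argument (It\^o--Krylov, the inequality $v\leq V^u$, and the admissibility bound $\tau\rho^{u^*}\leq 2K(1+\|\nabla v\|_\infty)$) matches the paper's reasoning essentially line for line. The genuine divergence is in the existence step for \eqref{eqn:hjb}. The paper does \emph{not} use a Banach contraction; instead it sets up the same solution map $\Phi:C^{0,1}(\cO_T)\to C^{0,1}(\cO_T)$, shows it is continuous and compact (via the chain $W^{2,1}_q\hookrightarrow C^{1+\mu}\hookrightarrow\hookrightarrow C^{0,1}$), and then invokes the Leray--Schauder fixed point theorem. The required uniform a~priori bound on solutions of $v=\eta\,\Phi(v)$ for $\eta\in[0,1]$ is obtained by sandwiching the Hamiltonian between $H(\cdot,\nabla v,u^{(0)})$ and $H(\cdot,\nabla v,a^*)$ and applying the maximum principle (Corollary~\ref{cor:max_princ_ext}); uniqueness is proved separately by a comparison argument. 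Your Banach route is a legitimate alternative and does work---the key input is the short-time smoothing estimate $\|\nabla d\|\lesssim \delta^{\alpha}\|G\|$ (e.g.\ from the semigroup bound $\|\nabla e^{r\cL}\|_{L_q\to L_q}\lesssim r^{-1/2}$ or from interpolating $\|\nabla d\|_{L_q}$ between $\|d\|_{L_q}\lesssim\delta\|G\|_{L_q}$ and $\|d\|_{W^{2,1}_q}\lesssim\|G\|_{L_q}$), which turns the $K$-Lipschitz dependence into a contraction on a slab of length $\delta\ll (CK)^{-2}$; you then iterate backward with the Besov trace $v(T-\delta,\cdot)\in B^{2-2/q}_{q,q}$ as terminal datum. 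The trade-off: Leray--Schauder is a one-shot global argument that needs only an a~priori bound and compactness and is indifferent to the size of $K$, whereas your approach yields uniqueness for free but forces you to track the $\delta$-dependence of the parabolic constants and the trace compatibility at each interface. Both are standard for semilinear parabolic equations with Lipschitz gradient nonlinearity, and either would be acceptable here.
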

The proof of Theorem \ref{thm:HJB_well_posed} is given in Appendix \ref{sec:app_pde_pfs}.
We now address the well-posedness of the mirror flow \eqref{eqn:grad_flow}.
Assumptions \ref{ass:mirror_reg} and  \ref{ass:lin_growth_derivative} below ensure that the mirror map $\psi^*$ and the problems primitives are sufficiently regular.
When $\tau>0$ we take $\psi^*:\mathbb{R}^p\rightarrow\mathbb{R}$ to be the Legendre conjugate of $\psi$, that is $\psi^*(y)=\sup_{a\in A}a\cdot y - \psi(a)$. 
When $\tau=0$ it suffices to pick any sufficiently regular $\psi^*$, such that $\nabla\psi^*(\mathbb{R}^p)=A$.
Given $\psi^*$ the continuous-time mirror flow is given by  $Z_0=Z^0\in B_b(\cO_T;\mathbb{R}^p)$
\begin{align}
\tfrac{d}{ds}Z_s&=-\nabla_a H(\cdot,\nabla V^{u_s},u_s)\text{ where }u_s = \nabla\psi^*(Z_s),\text{ for }s\geq 0.
\end{align}

\begin{assumption}
\label{ass:mirror_reg}
The mirror map $\psi^*:\mathbb{R}^p\rightarrow\mathbb{R}$
has a uniformly continuous Hessian and for any $y,y'\in\mathbb{R}^p$, $|\nabla^2\psi^*(y)[y']|\leq K|y'|$.  Moreover $\nabla\psi^*(\mathbb{R}^p)=A$.

When $\tau>0$ we further need the following;
for any $y\in\mathbb{R}^p$, $|\psi(\nabla\psi^*(y))|\leq K(1+|y|)$
and 
there exists a Lipschitz continuous map $\mathcal{C}:\mathbb{R}^p\rightarrow \mathbb{R}^p$ such that $\nabla\psi(\nabla\psi^*(y))=\mathcal{C}(y)$.
Moreover,
for any $y,y'\in\mathbb{R}^p$ and for any $\varepsilon\in[0,1]$ the 
convex combination $v_\varepsilon(y,y'):=\varepsilon\nabla\psi^*(y)+(1-\varepsilon)\nabla\psi^*(y')$ 
satisfies $|\nabla\psi(v_\varepsilon(y,y'))|\leq K(1+|y|+|y'|)$.
\end{assumption}
\begin{assumption}
\label{ass:lin_growth_derivative}
For any $(t,x,a)\in\cO_T\times A$, $(|\nabla_af|+|\nabla^2_a f|+|\nabla_ab|+|\nabla^2_a b|)(t,x,a)\leq K$.
\end{assumption}
\begin{theorem}
\label{thm:grad_flow_well_posed}
Suppose Assumptions \ref{ass:SDE_well_posed}, 
\ref{ass:mirror_reg}
and  \ref{ass:lin_growth_derivative} hold. 
Then for any $Z^0\in B_b(\cO_T;\mathbb{R}^p)$
there exists a unique $Z\in \cap_{S>0}C^1([0,S]; B_b(\cO_T;\mathbb{R}^p))$ satisfying
\eqref{eqn:grad_flow}.
\end{theorem}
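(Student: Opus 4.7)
The plan is to view \eqref{eqn:grad_flow} as an ordinary differential equation in the Banach space $X := B_b(\cO_T;\mathbb{R}^p)$ and apply the Picard--Lindel\"of theorem in $X$. More precisely, define the vector field
\begin{equation*}
F : X \to X, \qquad F(Z)(t,x) := -\nabla_a H\bigl(t,x,\nabla V^{u}(t,x),u(t,x)\bigr), \quad u := \nabla\psi^*(Z),
\end{equation*}
and show that $F$ is well-defined, locally Lipschitz on $X$, and enjoys at most linear growth. These three ingredients produce a unique maximal solution which, by the growth bound, exists globally.

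First I would verify that $F$ is well-defined. For any $Z \in X$, Assumption \ref{ass:mirror_reg} gives $u=\nabla\psi^*(Z)\in B_b(\cO_T;A)$ (since $A$ is bounded) and $\|\psi(u)\|_{B_b} \leq K(1+\|Z\|_{B_b})$, so $u\in\mathcal{U}_\psi^\tau$. Theorem \ref{thm:pde_well_posedness} then provides $V^u\in W^{2,1}_q(\cO_T)$ with the quantitative bound \eqref{eqn:W_2_p_mu_estimate}; since $q\geq d+2$, the parabolic Sobolev embedding $W^{2,1}_q(\cO_T)\hookrightarrow C^{1+\alpha,(1+\alpha)/2}(\bar{\cO}_T)$ yields $\|\nabla V^u\|_{B_b}$ bounded by a constant times $(1+\|Z\|_{B_b})$. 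Combined with Assumption \ref{ass:lin_growth_derivative} and the identity $\nabla_a H(t,x,z,a)=\nabla_a b(t,x,a)z+\nabla_a f(t,x,a)+\tau(\nabla\psi(a)-\nabla\psi(u^{(0)}))$ together with the representation $\nabla\psi(\nabla\psi^*(Z))=\mathcal{C}(Z)$, we conclude $F(Z)\in X$ with
\begin{equation*}
\|F(Z)\|_X \leq C\bigl(1 + \|Z\|_X\bigr),
\end{equation*}
which is the linear growth bound required for global solvability.

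The main obstacle is proving local Lipschitz continuity of $F$. Given $Z_1,Z_2\in X$ in a ball of radius $R$, put $u_i=\nabla\psi^*(Z_i)$; the bound on $\nabla^2\psi^*$ in Assumption \ref{ass:mirror_reg} gives $\|u_1-u_2\|_{B_b}\leq K\|Z_1-Z_2\|_X$. The difference $w:=V^{u_1}-V^{u_2}$ satisfies a linear parabolic equation of the form
\begin{equation*}
\partial_t w + \tfrac{1}{2}\tr(\sigma\sigma^\top\nabla^2 w) + b^{u_1}\cdot\nabla w + \bigl(b^{u_1}-b^{u_2}\bigr)\cdot\nabla V^{u_2} + \bigl(f^{u_1}-f^{u_2}\bigr) + \tau\bigl(\rho^{u_1}-\rho^{u_2}\bigr) = 0
\end{equation*}
with zero terminal/boundary condition. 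The Lipschitz estimates in Assumption \ref{ass:lin_growth_derivative} control the $b$ and $f$ increments by $\|u_1-u_2\|_{B_b}$, while the $\rho$ increment is bounded via the mean value theorem along the convex combination $v_\varepsilon$, the bound $|\nabla\psi(v_\varepsilon(Z_1,Z_2))|\leq K(1+\|Z_1\|_X+\|Z_2\|_X)$ in Assumption \ref{ass:mirror_reg}, and the already established $\|u_1-u_2\|_{B_b}\lesssim\|Z_1-Z_2\|_X$. Applying the $W^{2,1}_q$ a priori estimate for linear parabolic equations used in Theorem \ref{thm:pde_well_posedness} (whose constant depends only on the data) then gives $\|w\|_{W^{2,1}_q(\cO_T)}\leq C_R\|Z_1-Z_2\|_X$, and the Sobolev embedding upgrades this to $\|\nabla V^{u_1}-\nabla V^{u_2}\|_{B_b}\leq C_R\|Z_1-Z_2\|_X$. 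Feeding this, the Lipschitz continuity of $\mathcal{C}$, and Assumption \ref{ass:lin_growth_derivative} into the expression for $F$ yields $\|F(Z_1)-F(Z_2)\|_X \leq L_R\|Z_1-Z_2\|_X$.

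With local Lipschitz continuity and linear growth of $F$ on $X$ in hand, the Banach-space Picard--Lindel\"of theorem (Cauchy--Lipschitz) produces a unique maximal solution $Z\in C^1([0,S_{\max}); X)$; linear growth and Gr\"onwall's inequality rule out finite-time blow-up of $\|Z_s\|_X$, so $S_{\max}=\infty$. The regularity $Z\in C^1([0,S];X)$ for every $S>0$ then follows immediately since $s\mapsto F(Z_s)$ is continuous into $X$ by local Lipschitz continuity of $F$ and continuity of $s\mapsto Z_s$. The uniformity of the Hessian of $\psi^*$ in Assumption \ref{ass:mirror_reg} is what makes Lipschitz estimates uniform on bounded sets and is, along with the PDE stability estimate above, the technical crux of the argument.
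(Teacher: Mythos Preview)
Your proposal is correct and follows essentially the same route as the paper: establish local Lipschitz continuity of the flow map $Z\mapsto -\nabla_a H(\cdot,\nabla V^{\nabla\psi^*(Z)},\nabla\psi^*(Z))$ via the $W^{2,1}_q$ stability estimate for value functions (the paper's Lemma~\ref{lem:cont_u_to_V_u}), derive the linear growth bound, and then combine Picard--Lindel\"of with Gr\"onwall to obtain global existence. The only cosmetic difference is that the paper implements Picard--Lindel\"of by hand---truncating the operator to make it globally Lipschitz and invoking Banach's fixed point theorem in a weighted-norm space---whereas you invoke the Banach-space Cauchy--Lipschitz theorem directly; the underlying analysis is identical.
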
 
Having addressed the well-posedness of \eqref{eqn:grad_flow} we consider its convergence. 
Theorem \ref{thm:decreasing_cost_funct} will show that for all $(t,x)\in\cO_T$ the value function is non-increasing along solutions to \eqref{eqn:grad_flow}. 
Using this, Theorem \ref{thm:lin_conv} shows that convexity of the Hamiltonian in the action variable, uniformly in $(t,x,z)$ (see Assumption \ref{ass:ham_convex} below), is sufficient to prove pointwise convergence of the value functions along solutions to \eqref{eqn:grad_flow}.
The proof of Theorems \ref{thm:decreasing_cost_funct} and \ref{thm:lin_conv} are given in Section \ref{sec:well_posed_decrease_conv}.

\begin{theorem}
\label{thm:decreasing_cost_funct}
Suppose Assumptions \ref{ass:SDE_well_posed}, 
\ref{ass:mirror_reg}
and  \ref{ass:lin_growth_derivative} hold. 
Assume $Z^0 \in B_b(\cO_T;\mathbb{R}^p)$ and let $Z\in \cap_{S>0} C^1([0,S]; B_b(\cO_T;\mathbb{R}^p))$ be a solution to \eqref{eqn:grad_flow}.
Then for each $(t,x)\in\cO_T$ the map $s\mapsto V^{\nabla\psi^*(Z_s)}(t,x)\in\cO_T$ is differentiable and for each $s\geq 0$
\begin{equation}
\label{eqn:decreasing_cost_funct}
\begin{split}
\tfrac{d}{ds}&V^{u_s}(t,x)=-\E^{u_s}_{t,x}\int_t^{T_\cO}
\tfrac{d}{ds}Z_s(t',X_{s,t'})\cdot
\nabla^2\psi^*(Z_s(t',X_{s,t'}))\big[\tfrac{d}{ds}Z_s(t',X_{s,t'})\big]dt',
\end{split}
\end{equation}
where $(X_{s,t'})_{t'\geq t}$ corresponds to the unique weak solution of \eqref{eqn:controlled_SDE} corresponding to the control $u_s=\nabla\psi^*(Z_s)$.
In particular $s\mapsto V^{\nabla\psi^*(Z_s)}(t,x)$ is non-increasing.
\end{theorem}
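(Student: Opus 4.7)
The plan is to first invoke the performance difference formula (Lemma~\ref{lem:perf_diff}),
\begin{equation*}
V^{u}(t,x) - V^{u'}(t,x) = \E^{u'}_{t,x}\int_t^{T_\cO} \bigl[H(\cdot, \nabla V^{u}, u) - H(\cdot, \nabla V^{u}, u')\bigr]\,dt',
\end{equation*}
obtained by applying It\^o's formula to $V^{u}\in W^{2,1}_q(\cO_T)$ (Theorem~\ref{thm:pde_well_posedness}) along the $u'$-controlled SDE and then substituting the on-policy Bellman equation~\eqref{eqn:onpolicy_bellman}. Setting $u=u_{s+h}$, $u'=u_s$ produces a difference quotient in which the expectation and the stopping time $T_\cO$ depend only on the fixed dynamics $u_s$; this is the decisive feature that enables clean differentiation in $s$, and avoids the need to differentiate the law of the state process.

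I would then rewrite the integrand as
\begin{equation*}
\left(\int_0^1 \nabla_a H\bigl(\cdot,\nabla V^{u_{s+h}}, u_s+\varepsilon(u_{s+h}-u_s)\bigr)\,d\varepsilon\right)\cdot \frac{u_{s+h}-u_s}{h}
\end{equation*}
and pass $h\to 0$ under $\E^{u_s}_{t,x}\int_t^{T_\cO}$. Three ingredients are needed: (i) Hadamard differentiability (Lemma~\ref{lem:had_diff}) combined with $u_s=\nabla\psi^*(Z_s)$ and Assumption~\ref{ass:mirror_reg} give $h^{-1}(u_{s+h}-u_s)\to \nabla^2\psi^*(Z_s)\tfrac{d}{ds}Z_s$ uniformly on $\cO_T$; (ii) continuous dependence of $\nabla V^{u}$ on $u$ in $L_q(\cO_T)$, deduced by applying the a priori estimate~\eqref{eqn:W_2_p_mu_estimate} to the affine parabolic equation solved by $V^{u_{s+h}}-V^{u_s}$ using the tools of Section~\ref{sec:reg_and_pd}; (iii) the bound on $|\nabla_a H|$ in terms of $1+|\nabla V|$ supplied by Assumption~\ref{ass:lin_growth_derivative}, giving a uniformly $L_q$-integrable majorant. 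Standard $L_q$-type bounds on the occupation density of the $u_s$-diffusion then let dominated convergence pass through the expectation.

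Substituting the flow equation $\tfrac{d}{ds}Z_s=-\nabla_a H(\cdot,\nabla V^{u_s},u_s)$ into the resulting limit, and using symmetry of $\nabla^2\psi^*$, collapses the integrand to $-\tfrac{d}{ds}Z_s\cdot \nabla^2\psi^*(Z_s)[\tfrac{d}{ds}Z_s]$, which is exactly~\eqref{eqn:decreasing_cost_funct}. The non-increasing property is then immediate: $\nabla^2\psi^*$ is positive semidefinite by convexity of $\psi^*$ (Assumption~\ref{ass:mirror_reg}), so the integrand is everywhere non-positive.

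The main obstacle is step~(ii): proving $\nabla V^{u_{s+h}}\to \nabla V^{u_s}$ in a norm strong enough to survive composition with the random trajectory $X_{s,t'}$ inside the expectation. Unlike the static setting, pointwise differentiability of $s\mapsto V^{u_s}(t,x)$ at every $(t,x)\in\cO_T$ is not automatic and ultimately rests on the full parabolic regularity theory from Section~\ref{sec:reg_and_pd} together with the fact that along the flow $u_s\in\mathcal{U}^\tau_\psi$ with a uniform $B_b$-bound on $\rho^{u_s}$; the latter is supplied by the well-posedness Theorem~\ref{thm:grad_flow_well_posed}.
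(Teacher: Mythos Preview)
Your approach is correct but takes a more hands-on route than the paper. You compute the derivative by applying the performance difference lemma to $V^{u_{s+h}}-V^{u_s}$ and then passing to the limit directly under the expectation, re-deriving all the necessary convergence from scratch. The paper instead packages exactly this computation into Lemma~\ref{lem:had_diff}, which establishes Hadamard differentiability of the map $Z\mapsto V^{\nabla\psi^*(Z)}(t,x)$ with the explicit differential
\[
\partial V^{\nabla\psi^*(Z)}(t,x)[Z']=\E^{\nabla\psi^*(Z)}_{t,x}\int_t^{T_\cO}\nabla_aH(\cdot,\nabla V^{\nabla\psi^*(Z)},\nabla\psi^*(Z))\cdot \nabla^2\psi^*(Z)[Z']\,dt'.
\]
The paper's proof of Theorem~\ref{thm:decreasing_cost_funct} is then three lines: view $s\mapsto V^{\nabla\psi^*(Z_s)}(t,x)$ as the composition of the differentiable map $s\mapsto Z_s$ with the Hadamard differentiable map $Z\mapsto V^{\nabla\psi^*(Z)}(t,x)$, apply the chain rule for Hadamard derivatives, and substitute $\tfrac{d}{ds}Z_s=-\nabla_aH(\cdot,\nabla V^{u_s},u_s)$.

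Your argument is essentially the proof of Lemma~\ref{lem:had_diff} specialised to the curve $s\mapsto Z_s$, so nothing is lost mathematically; the paper's version simply separates concerns and avoids duplication. One minor point: in your step~(i) you cite Lemma~\ref{lem:had_diff} for the convergence $h^{-1}(u_{s+h}-u_s)\to\nabla^2\psi^*(Z_s)\tfrac{d}{ds}Z_s$, but this is the content of Lemma~\ref{lem:mirror_diff} (Hadamard differentiability of $Z\mapsto\nabla\psi^*(Z)$), not of Lemma~\ref{lem:had_diff}. Had you invoked Lemma~\ref{lem:had_diff} for what it actually says, the chain rule would have delivered the result immediately and steps~(ii) and~(iii) would be unnecessary. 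Also, your appeal to ``$L_q$-type bounds on the occupation density'' is more than is needed: since all integrands are bounded uniformly in $B_b(\cO_T)$ (via the embedding~\eqref{eqn:embedding_Sob_Easy} and Lemma~\ref{lem:cont_u_to_V_u}), ordinary dominated convergence with the bound $T$ on the time interval suffices.
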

\begin{assumption}[Convexity]
\label{ass:ham_convex}
There exists a $\lambda\geq 0$ such that for any $(t,x,z)\in\cO_T\times\mathbb{R}^d$
and for any $a,a'\in A$
\begin{align}
\label{eqn:strong_convex}
H(t,x,z,a)-H(t,x,z,a')\geq \nabla_aH(t,x,z,a')\cdot(a-a')+\frac{\lambda}{2}D_{\psi}(a,a').
\end{align}
\end{assumption}
Let us
introduce the following Lyapunov function
$\mathcal{D}:B_b(\cO_T;\mathbb{R}^p)\times B_b(\cO_T;\mathbb{R}^p)\rightarrow B_b(\cO_T;\mathbb{R})$ defined for all $Z,Z'\in B_b(\cO_T;\mathbb{R}^p)$ by
\begin{equation}
\label{eqn:bregman}
\begin{split}
\mathcal{D}&(Z,Z')(t,x)\\
&:=
\E_{t,x}^{\nabla\psi^*(Z')}
\int_t^{T_\cO}
(\psi^*(Z)-\psi^*(Z'))(t',X_{t'})
- \nabla \psi^*(Z')\cdot(Z-Z')(t',X_{t'})
dt',
\end{split}
\end{equation} 
for each $(t,x)\in\cO_T$.
\begin{theorem}
\label{thm:lin_conv}
Suppose Assumptions \ref{ass:SDE_well_posed}, 
\ref{ass:mirror_reg} and \ref{ass:lin_growth_derivative} hold and  $Z\in \cap_{S>0}C^1([0,S]; B_b(\cO_T;\mathbb{R}^p))$ satisfies \eqref{eqn:grad_flow}.
If Assumption \ref{ass:ham_convex} holds with $\lambda=0$,
then for each $S>0$ and for any $(t,x)\in\cO_T$
\begin{align}
\label{eqn:lin_conv}
(V^{u_S}-V^{u^*})(t,x)
\leq \mathcal{D}(Z_0,Z^*)(t,x) S^{-1}.
\end{align}
If Assumption \ref{ass:ham_convex} holds with $\lambda>0$ 
and 
for any $y,y'\in\mathbb{R}^p$, $D_{\psi}(\nabla\psi^*(y),\nabla\psi^*(y'))=D_{\psi^*}(y,y')$ 
then for each $S>0$ and $(t,x)\in\cO_T$
\begin{align}
\label{eqn:exp_con}
\left(V^{u_S}-V^{u^*}\right)(t,x)
\leq\frac{\lambda}{2}\mathcal{D}(Z_0,Z^*)(t,x)\Big(e^{\tfrac{\lambda}{2}S}-1\Big)^{-1},
\end{align}
and
\begin{equation}
\label{eqn:strong_convex_conv}
\begin{split}
\mathcal{D}(Z_S,Z^*)(t,x)\leq e^{-\frac{\lambda}{2}S}\mathcal{D}(Z_0,Z^*)(t,x).
\end{split}
\end{equation}
\end{theorem}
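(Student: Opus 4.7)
The plan is to imitate the static-optimization argument from~\eqref{eqn:lyap_static}, promoting the scalar Bregman divergence to the functional Lyapunov $\mathcal{D}(\cdot,Z^*)$ of~\eqref{eqn:bregman} and compensating for the non-convexity of $u\mapsto V^u$ with the performance difference lemma (Lemma~\ref{lem:perf_diff}). The backbone is a single differential inequality; the two conclusions differ only in how it is integrated.

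First I would differentiate $\mathcal{D}(Z_s,Z^*)(t,x)$ in $s$. Because the law in~\eqref{eqn:bregman} is taken under the $s$-independent optimal dynamics $u^*=\nabla\psi^*(Z^*)$, the derivative can be brought inside the integral; after the cancellations built into the Bregman formula this gives
\begin{equation*}
\tfrac{d}{ds}\mathcal{D}(Z_s,Z^*)(t,x) = \E^{u^*}_{t,x}\!\int_t^{T_\cO}\!(u_s - u^*)(t',X_{t'})\cdot\tfrac{d}{ds}Z_s(t',X_{t'})\,dt'.
\end{equation*}
Substituting the flow equation~\eqref{eqn:grad_flow} and applying Assumption~\ref{ass:ham_convex} with $a=u^*$, $a'=u_s$ and $z=\nabla V^{u_s}$ upgrades this to
\begin{equation*}
\tfrac{d}{ds}\mathcal{D}(Z_s,Z^*)(t,x) \leq -\E^{u^*}_{t,x}\!\int_t^{T_\cO}\!\bigl[H(\cdot,\nabla V^{u_s},u_s)-H(\cdot,\nabla V^{u_s},u^*)+\tfrac{\lambda}{2}D_\psi(u^*,u_s)\bigr](t',X_{t'})\,dt'.
\end{equation*}
The performance difference lemma identifies the integrated Hamiltonian gap with $(V^{u_s}-V^{u^*})(t,x)$, producing the master inequality
\begin{equation*}
\tfrac{d}{ds}\mathcal{D}(Z_s,Z^*)(t,x) + (V^{u_s}-V^{u^*})(t,x) + \tfrac{\lambda}{2}\E^{u^*}_{t,x}\!\int_t^{T_\cO}\!D_\psi(u^*,u_s)(t',X_{t'})\,dt' \leq 0.
\end{equation*}

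To extract the rates, for $\lambda=0$ I integrate from $0$ to $S$, drop the non-negative term $\mathcal{D}(Z_S,Z^*)\geq 0$, and invoke the monotonicity of $s\mapsto V^{u_s}(t,x)$ from Theorem~\ref{thm:decreasing_cost_funct} to bound $\int_0^S(V^{u_s}-V^{u^*})(t,x)\,ds\geq S(V^{u_S}-V^{u^*})(t,x)$; rearranging yields~\eqref{eqn:lin_conv}. For $\lambda>0$, applying the hypothesis $D_\psi(\nabla\psi^*(y),\nabla\psi^*(y'))=D_{\psi^*}(y,y')$ pointwise under the expectation identifies the remaining Bregman term with $\tfrac{\lambda}{2}\mathcal{D}(Z_s,Z^*)(t,x)$, reducing the master inequality to $\tfrac{d}{ds}\mathcal{D}(Z_s,Z^*)+\tfrac{\lambda}{2}\mathcal{D}(Z_s,Z^*)\leq-(V^{u_s}-V^{u^*})\leq 0$; a Gr\"onwall step gives~\eqref{eqn:strong_convex_conv}, and multiplying by $e^{\lambda s/2}$, integrating on $[0,S]$, discarding the non-negative endpoint, and again using the monotonicity of $V^{u_s}$ delivers~\eqref{eqn:exp_con}. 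The principal obstacle is justifying the derivative-under-the-integral step at the outset; the $s$-independence of the controlled law keeps us from needing the full Hadamard machinery of Theorem~\ref{thm:decreasing_cost_funct}, but dominated convergence still requires the uniform Hessian bound from Assumption~\ref{ass:mirror_reg} combined with the $C^1$ regularity of $s\mapsto Z_s$ supplied by Theorem~\ref{thm:grad_flow_well_posed}.
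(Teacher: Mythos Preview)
Your proposal is correct and follows essentially the same route as the paper: differentiate $\mathcal{D}(Z_s,Z^*)$ under the $s$-independent expectation, insert the flow equation, apply the convexity Assumption~\ref{ass:ham_convex} together with the performance difference Lemma~\ref{lem:perf_diff} to obtain the master inequality, then integrate (plain for $\lambda=0$, with the exponential weight $e^{\lambda s/2}$ for $\lambda>0$) using the monotonicity from Theorem~\ref{thm:decreasing_cost_funct} and Gr{\"o}nwall. Your explicit remark that the $s$-independence of the law under $u^*$ is what permits differentiating under the integral without the Hadamard machinery is a point the paper leaves implicit.
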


\section{Examples}
\label{sec:examples}
\subsection{Linear Quadratic Control Problem On a Ball}
\label{sec:ex_LQR}
Let $M_1, M_2,M_3,N\in\mathcal{M}_{2\times 2}$, the space of $(2\times 2)$ matrices.
Set $b(t,x,a)=M_1x+Na$, $\sigma(t,x)= M_2$, $f(t,x,a)=\tfrac{|x|^2}{2}+\tfrac{|a|^2}{2}$ and $g(x)=x^\top M_3 x$ and assume $M_2$ is strictly positive definite matrix.
Let $\cO$ be an appropriate domain in $\mathbb{R}^2$ and let $A$ represent the ball of radius $R$ centered at the origin in $\mathbb{R}^2$.
Let $\psi:\mathbb{R}^2\rightarrow\mathbb{R}\cup\{\infty\}$ be defined by $\psi(a)=-\log(R^2-|a|^2) $ for $|a|<R$ and $\psi(a)=+\infty$ otherwise.
Moreover, for each $(t,x)\in\cO_T$ let $u^{(0)}(t,x)=0\in A$.
Assumptions \ref{ass:SDE_well_posed} and \ref{ass:lin_growth_derivative} are immediate. 
A straightforward but tedious calculation shows that $\nabla\psi^*:\mathbb{R}^2\rightarrow A$ is given by 
\begin{align}
\label{eqn:ex_leg_grad}
\nabla\psi^*(y) &= \tfrac{R^2 y}{1 + \sqrt{1 + R^2 |y|^2}}.
\end{align}
The derivation of \eqref{eqn:ex_leg_grad} along with the validation of Assumption \ref{ass:mirror_reg} is provided in Appendix \ref{sec:ap_ex2}.

For $\tau\geq 0$
the Hamiltonian is given by $\mathcal{H}(t,x,z)=\inf_{a\in B_R(\textbf{0})} H(t,x,z,a)$ where
\begin{align*}
H(t,x,z,a) = x^\top M_1^\top z + a^\top N^\top z + \tfrac{|x|^2}{2}+\tfrac{|a|^2}{2}+\tau D_\psi(a|0).
\end{align*}

For $\tau=0$ it is straight forward to show that for each $(t,x,z)\in\cO_T\times\mathbb{R}^2$ we have
\begin{align*}
\mathcal{H}(t,x,z)&=
\begin{cases}
(M_1 x)\cdot z + 
\tfrac{3|N^\top z|^2}{2}
+ \tfrac{|x|^2}{2}
& |N^\top z|\leq R \\
(M_1 x)\cdot z + R|N^\top z| + \tfrac{|x|^2}{2}+\tfrac{ R^2}{2} & |N^\top z|> R
\end{cases},\\
a^*(t,x,z)
&=
\begin{cases}
N^\top z & |N^\top z|\leq R\\
\tfrac{RN^\top z}{|N^\top z|} & |N^\top z|> R
\end{cases}.
\end{align*}
This is sufficient to show Assumption \ref{ass:ham_structure} holds.
For Assumption \ref{ass:ham_convex} we have  
\begin{align*}
H(t,x,z,a)-H(t,x,z,a')&=\left[\tfrac{a+a'}{2}+N^\top z\right]\cdot (a-a')\\
&=\left[\tfrac{a-a'}{2}+\nabla_aH(t,x,z,a')\right]\cdot(a-a').
\end{align*}
Therefore for 
$\tau=0$
the mirror flow reads as follows: fix $Z^0\in B_b(\cO_T;\mathbb{R}^2)$ and 
\begin{align*}
\tfrac{d}{ds}Z_s = -(N^\top \nabla V^{u_s} + u_s), \enspace u_s=\tfrac{R^2 Z_s}{1+\sqrt{1+R^2|Z_s|^2}},
\end{align*}
and Theorem \ref{thm:lin_conv} guarantees a linear rate of convergence.

For $\tau>0$ and $a,a'\in A$ the Bregman divergence is given by
\begin{align}
\label{eqn:Bregman_1}
D_\psi(a|a')
=\log\tfrac{R^2-|a'|^2}{R^2-|a|^2} + \tfrac{2a'}{R^2-|a'|^2}\cdot(a-a')\,.
\end{align}
The validation of Assumptions \ref{ass:ham_structure} and 
\ref{ass:ham_convex} 
is given in Appendix \ref{sec:ap_ex2}.
Finally since $\nabla\psi(\nabla\psi^*(y))=y$ the condition in the statement of Theorem \ref{thm:lin_conv} holds and we have exponential convergence along the mirror flow
\begin{align*}
\tfrac{d}{ds} Z_s=-[N^\top \nabla V^{u_s} + u_s+\tau \tfrac{2u_s}{R^2-|u_s|^2}], \enspace u_s=\tfrac{R^2 Z_s}{1+\sqrt{1+R^2|Z_s|^2}}.
\end{align*}

\subsection{Entropy Regularized Relaxed Control Problem with Finite Action Space}
\label{sec:example_finite_actions}
We consider a control problem where the control can take a finite number of discrete actions which we will take, 
without loss of generality, 
to be $\{1,\ldots,p\}$.
The drift and running cost are given by bounded and measurable $(\beta,\varphi):[0,T]\times \mathbb R^d \times \{1,\ldots,p\}\to (\mathbb R^d,\mathbb R)$ respectively. 
The uncontrolled diffusion $\sigma$ is as in Assumption~\ref{ass:SDE_well_posed}.

To see the relaxed forumation we take 
$A=\{a=(a_1,\dots,a_p)^\top\in\mathbb{R}^p:\enspace \sum_{i=1}^pa_i=1\text{ and }a_{i}\geq 0 \text{ for all }1\leq i \leq p\}$.
Note that $A$ can be seen as the set of probability measures on $\{1,\ldots,p\}$ i.e. $A=\mathcal P(\{1,\ldots,p\})$.
Let us fix some reference vector 
$a^{(0)}\in A$ and $\tau > 0$.
Let
$\psi:\mathbb{R}^p\rightarrow\mathbb{R}\cup\{+\infty\}$ by $\psi(a)=\sum_{i=1}^pa_i\log(a_i)$ for $a\in A$ and $\psi(a)=+\infty$ otherwise.
A short calculation using the definition of Bregman divergence shows that
$\rho^\cdot:A\rightarrow\mathbb{R}$ is given by 
$\rho^a=D_\psi(a|a^{(0)})
=\KL(a|a^{(0)})$.
For a given Markov control $u:\cO_T\rightarrow A$ the value function is given by
\begin{align}
\label{eqn:eg_finite_action_VF}
V^u(t,x)= \E_{t,x}^u \bigg[ \int_t^{T_\cO} \bigg(\sum_{i=1}^p \varphi(r,X_r,i) u_i(r,X_r) + \tau\KL(u(r,X_r)|a^{(0)}) \bigg)dr+g(X_{T_\cO})\bigg]\,,
\end{align}
where $X^{t,x,u}$ is the unique solution of the following controlled SDE
\begin{align}
\label{eqn:eg_finite_actionion_SDE}
dX_r & = \sum_{i=1}^{p}\beta(r,X_r,i) u_i(r,X_r)dr +\sigma(r,X_r)dW_r, \enspace r\geq t, \enspace X_t = x\,.
\end{align}
We have that $H(t,x,z,a)=\sum_{i=1}^p a_i\big(\beta(t,x,i)\cdot z + \varphi(t,x,i) + \tau \log \tfrac{a_i}{a_i^{(0)}}\big)$.
The proof that Assumption \ref{ass:ham_structure} is satisfied is given in Appendix \ref{sec_app_2nd_ex}.
Assumption \ref{ass:SDE_well_posed} is immediate since the map $\psi(a)=\sum_{i=1}^pa_i\log a_i$ is strictly convex on the probability simplex.
It can be shown that the Legendre conjugate is given by  $\psi^*:\mathbb{R}^p\rightarrow\mathbb{R}$ and  satisfies $\psi^*(y)=\log(\sum_{i=1}^pe^{y_i})$ with 
\begin{align*}
\nabla\psi^*(y) = \text{softmax}(y),\enspace
\nabla^2\psi^*(y)= \text{diag}(\text{softmax}(y))-\text{softmax}(y)\text{softmax}(y)^\top,
\end{align*}
where $\text{softmax}(y):=(\tfrac{e^{y_1}}{\sum_{j=1}^pe^{y_j}},\dots,\tfrac{e^{y_p}}{\sum_{j=1}^pe^{y_j}})^\top$. 
The validation of Assumptions \ref{ass:mirror_reg} and \ref{ass:ham_convex} is given in Appendix \ref{sec_app_2nd_ex}.

The mirror flow now reads as follows.
Fix $Z_0\in B_b(\cO_T;\mathbb{R}^p)$ and define
\begin{align*}
\tfrac{d}{ds}Z_s = 
-\left[\beta^\top\nabla V^{u_s} + \varphi+\tau(\log u_s + 1)\right],\enspace
u_s  =\bigg(\tfrac{e^{Z^{(i)}_s}}{\sum_{j=1}^pe^{Z^{(j)}_s}}\bigg)_{i=1}^p.
\end{align*}

\section{Value Function Regularity and the Performance Difference}
\label{sec:reg_and_pd}
This section establishes several regularity properties of the map $Z\mapsto V^{\nabla\psi^*(Z)}$, which will
be used to prove the well-posedness and convergence of the mirror flow \eqref{eqn:grad_flow}.
We first prove Lemma \ref{lem:perf_diff}, the so-called performance difference lemma. 
The proof relies on Lemma \ref{lem:fey_kac} which is a generalization of the Feyman-Kac Formula for twice weakly differentiable functions, see  \cite[Ch.~2., Sec.~10, Theorem 1]{krylov2008controlled}.
\begin{lemma}
\label{lem:fey_kac}
[\textbf{Feynmann-Kac Formula}]
Suppose Assumption \ref{ass:SDE_well_posed} holds.
Let $h\in W^{1,2}_q(\cO_T)$, $F\in L^q(\cO_T)$ and $u\in B_b(\cO_T;A)$ be such that $h=0$ a.e  on $\partial\cO_T$ and
\begin{align*}
        \tfrac{\partial h}{\partial t}+\mathcal{L}^uh+F=0\text{ a.e. in }\cO_T.
\end{align*}
Then for any $(t,x)\in\cO_T$, $h(t,x) =\E^{u}_{t,x} \left[ \int_t^{T_\cO}h(r,X_r)dr \right]$ where $(X_r)_{r\geq 0}$ corresponds to the unique weak solution of \eqref{eqn:controlled_SDE} with $u$.
\end{lemma}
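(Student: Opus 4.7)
The strategy is to establish a generalized Itô formula for the function $h\in W^{2,1}_q(\cO_T)$ along the SDE path $(X_r)_{r\geq t}$, and then use the PDE to identify the integrand. The classical Feynman--Kac formula requires $h\in C^{1,2}$; here $h$ is only twice weakly differentiable, so the core task is to justify the chain rule via approximation. Throughout, the standing assumption $q\geq d+2$ is what makes Krylov-type stochastic analysis applicable.

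First, I would approximate $h$ by mollification: construct smooth functions $h_n\in C^\infty(\overline{\cO_T})$ with $h_n\to h$ in $W^{2,1}_q(\cO_T)$. Since $q\geq d+2$, the parabolic Sobolev embedding $W^{2,1}_q(\cO_T)\hookrightarrow C(\overline{\cO_T})$ upgrades this to uniform convergence. In particular $h_n(t,x)\to h(t,x)$ and, using the boundary condition $h=0$ on $\partial\cO_T$, $h_n(T_\cO, X_{T_\cO})\to 0$ almost surely. Next, apply the classical Itô formula to $h_n(t', X_{t'})$ along the weak solution $X$ from Theorem \ref{thm:SDE_well_posedness_moment_bound}, stop at $T_\cO$, localise the stochastic integral by a standard sequence of stopping times and take expectations to kill the martingale term. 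This yields
\begin{align*}
h_n(t,x) = \E^u_{t,x}[h_n(T_\cO, X_{T_\cO})] + \E^u_{t,x}\int_t^{T_\cO} G_n(r, X_r)\,dr,
\end{align*}
where $G_n:=-\partial_r h_n - \mathcal{L}^u h_n$ converges in $L^q(\cO_T)$ to $F$ by the PDE.

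Third, I would pass to the limit $n\to\infty$. The key tool is Krylov's $L^q$ estimate for weak solutions of the SDE with bounded measurable drift and uniformly elliptic diffusion (as cited from Krylov's book): for any $G\in L^q(\cO_T)$ with $q\geq d+2$,
\begin{align*}
\E^u_{t,x}\int_t^{T_\cO}|G(r, X_r)|\,dr \leq C\|G\|_{L^q(\cO_T)}.
\end{align*}
Applied to $G=G_n - F$, this gives convergence of the integral term; combined with the uniform convergence $h_n\to h$ and the vanishing boundary contribution, the desired Feynman--Kac identity follows.

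The main obstacle is the passage to the limit for the path integrals: the $W^{2,1}_q$ regularity alone is not enough since $\partial_t h$ and $\mathcal{L}^u h$ are defined only a.e.\ on $\cO_T$, whereas the Itô formula requires evaluating them along a set (the random path) that has zero Lebesgue measure in $\cO_T$. Krylov's estimate is precisely what bridges this gap, and it is the reason the assumption $q\geq d+2$ is imposed throughout the paper. Some care is also needed in the localisation of the stochastic integral, since $\sigma\nabla h_n$ need only be bounded (not square-integrable uniformly in $n$), but on the stopped interval $[t, T_\cO]$ with $\cO$ bounded this is routine.
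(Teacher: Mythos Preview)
Your proposal is correct. The paper does not actually supply a proof of this lemma; it simply invokes Krylov's generalised It\^o formula for $W^{2,1}_q$ functions \cite[Ch.~2, Sec.~10, Theorem~1]{krylov2008controlled} as a black box (the same citation is used explicitly in the verification step of Theorem~\ref{thm:pde_well_posedness} in the appendix). Your mollification argument combined with Krylov's $L^q$ estimate is exactly the standard way that generalised It\^o formula is established, so at the level of ideas your approach and the paper's are the same --- you are just unpacking what the paper cites. Incidentally, you correctly end up with $\int_t^{T_\cO} F(r,X_r)\,dr$ in the representation; the $h(r,X_r)$ in the lemma's displayed conclusion is a typo for $F(r,X_r)$, as is clear from how the lemma is applied in the proof of Lemma~\ref{lem:perf_diff}.
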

\begin{lemma}
\label{lem:perf_diff}
[\textbf{Performance Difference}]
Suppose Assumptions \ref{ass:SDE_well_posed} and \ref{ass:mirror_reg} hold.
Let
$u,u'\in \mathcal{U}_\psi^\tau$.
Then for each $(t,x)\in\cO_T$
\begin{equation}
\label{eqn:performance_diff} 
\begin{split}
V^{u}(t,x)-V^{u'}(t,x)&=\E^{u'}_{t,x}\bigg[\int_t^{T_\cO}H(\cdot,\nabla V^{u},u)(r,X_r)-H(\cdot,\nabla V^{u},u')(r,X_r)dr\bigg],
\end{split}
\end{equation}
where $X=X^{t,x,u'}$ is the weak solution corresponding to the control $u'$.
\end{lemma}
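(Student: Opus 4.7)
The plan is to derive the performance difference identity by subtracting the two on-policy Bellman equations satisfied by $V^u$ and $V^{u'}$, regrouping the second-order and first-order terms so that the difference $h := V^u - V^{u'}$ becomes a solution of a linear PDE under the control $u'$ whose source term is exactly the Hamiltonian difference, and then invoking Lemma \ref{lem:fey_kac} to obtain the stochastic representation.

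First, by Theorem \ref{thm:pde_well_posedness}, since $u,u'\in\mathcal{U}_\psi^\tau$, both $V^u$ and $V^{u'}$ belong to $W^{2,1}_q(\cO_T)$ and satisfy \eqref{eqn:onpolicy_bellman} with their respective controls, and they agree with $g$ on $\partial\cO_T$. Subtracting the two PDEs gives, almost everywhere in $\cO_T$,
\begin{equation*}
\partial_t h + \mathcal{L}^{u} V^{u} - \mathcal{L}^{u'} V^{u'} + (f^u - f^{u'}) + \tau(\rho^{u} - \rho^{u'}) = 0.
\end{equation*}
The key algebraic step is to split the drift term as
\begin{equation*}
\mathcal{L}^{u} V^{u} - \mathcal{L}^{u'} V^{u'} = \mathcal{L}^{u'}(V^u - V^{u'}) + (b^{u} - b^{u'}) \cdot \nabla V^{u},
\end{equation*}
which uses the fact that $\mathcal{L}^a - \mathcal{L}^{a'} = (b^a - b^{a'})\cdot \nabla$ since the diffusion coefficient is uncontrolled. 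Substituting this back, $h$ satisfies
\begin{equation*}
\partial_t h + \mathcal{L}^{u'} h + F = 0, \qquad F := (b^{u} - b^{u'})\cdot \nabla V^{u} + (f^u - f^{u'}) + \tau(\rho^{u} - \rho^{u'}),
\end{equation*}
together with $h = 0$ on $\partial\cO_T$. Recognising the definition of $H$ in \eqref{eqn:ham} immediately gives $F = H(\cdot,\nabla V^u, u) - H(\cdot,\nabla V^u, u')$.

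It remains to verify the hypotheses of Lemma \ref{lem:fey_kac}. The regularity $h\in W^{2,1}_q(\cO_T)$ follows from Theorem \ref{thm:pde_well_posedness} applied to each value function. For $F\in L^q(\cO_T)$, the contributions from $b$ and $f$ are bounded by Assumption \ref{ass:SDE_well_posed}, multiplied by $\nabla V^u\in L^q(\cO_T)$, and the $\rho$ terms are bounded because $u,u'\in\mathcal{U}_\psi^\tau$, so $F\in L^\infty(\cO_T)\subset L^q(\cO_T)$ on the bounded domain $\cO_T$. Applying Lemma \ref{lem:fey_kac} under the control $u'$ then yields
\begin{equation*}
h(t,x) = \E^{u'}_{t,x}\bigg[\int_t^{T_\cO} F(r,X_r)\,dr\bigg],
\end{equation*}
which is precisely \eqref{eqn:performance_diff}. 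The only subtle point is the splitting of the second-order part of $\mathcal{L}^u V^u - \mathcal{L}^{u'} V^{u'}$, which is trivial here because $\sigma$ carries no $a$-dependence; this is what makes the identity clean. The rest is bookkeeping.
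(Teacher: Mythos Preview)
Your proof is correct and follows essentially the same approach as the paper: subtract the two on-policy Bellman equations, regroup so that $h=V^u-V^{u'}$ satisfies a linear PDE under $\mathcal{L}^{u'}$ with source term $H(\cdot,\nabla V^u,u)-H(\cdot,\nabla V^u,u')$, and then invoke Lemma~\ref{lem:fey_kac}. Your write-up is simply more explicit about the algebraic splitting and the verification that $F\in L^q(\cO_T)$, which the paper leaves implicit.
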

\begin{proof}
Let $w:=V^u-V^{u'}\in W^{1,2}_q(\cO_T)$.
From Theorem \ref{thm:pde_well_posedness} $w$ satisfies
\begin{align*}
\partial_t w +\tfrac{1}{2}\text{Tr}(\sigma\sigma^\top \nabla^2w)+b^{u'}\cdot \nabla w+[H(\cdot,\nabla V^u,u)-H(\cdot,\nabla V^u,u')]=0,
\end{align*}
a.e. in $\cO_T$ and $w=0$ on $\partial\cO_T$.
Therefore \eqref{eqn:performance_diff} follows from Lemma \ref{lem:fey_kac}.
\end{proof}

We now show that the map $B_b(\cO_T;\mathbb{R}^p)\ni Z\mapsto V^{\nabla\psi^*(Z)}\in W^{2,1}_q(\cO_T)$ is locally Lipschitz, this is Lemma \ref{lem:cont_u_to_V_u}. 
The proof makes use of the $W^{2,1}_q(\cO_T)$ estimate for linear parabolic PDEs given in  Lemma \ref{lem:parabolic_PDE},
whose proof can be found in 
~\cite[Theorem 9.2.3 and Theorem 9.2.5]{wu2006elliptic}.
A key part of the analysis in the rest of this section is the following embedding result given in \cite[Appendix E]{fleming2012deterministic}.
For $q>d+2$  there exists a $C>0$ such that for any $h:\cO_T\rightarrow\mathbb{R}$,
\begin{align}
\label{eqn:embedding_Sob_Easy}
\sup_{t,x\in\cO_T}|h(t,x)|+\sum_{i=1}^d\sup_{t,x\in\cO_T}|\nabla_{x_i}h(t,x)|\leq C\|h\|_{W^{2,1}_q(\cO_T)}.
\end{align}
\begin{lemma}
\label{lem:parabolic_PDE}
Let $\cO\subset\mathbb{R}^d$ be a bounded domain whose boundary is of class $C^2$.
For $1\leq i,j\leq d$ let $a_{ij},b_i,c:\cO_T\rightarrow\mathbb{R}$ be measurable functions for which there exists constants $\lambda,M>0$ such that for all $\xi\in\mathbb{R}^d$, $(x,t)\in\cO_T$,
$a_{ij}(x,t)\xi_i\xi_j\geq\lambda|\xi|^2$ and $\sum_{i,j=1}^d\|a_{ij}\|_{L_\infty(\cO_T)}+\sum_{i=1}^d\|b_i\|_{L_\infty(\cO_T)}+\|c\|_{L_\infty(\cO_T)}\leq M$ with $a_{ij}\in C(\bar{\cO})$.
For each $q\in(1,\infty)$  and $f\in L_q(\cO_T)$ there exists a unique $v\in W^{2,1}_q(\cO_T)$ satisfying the  initial boundary problem $ \tfrac{\partial v}{\partial t}-\sum_{i,j=1}^da_{ij}\nabla_{ij}v+\sum_{i=1}^db_i\nabla_iv+cv = f ,\enspace \text{ a.e in }\cO_T$,
$v=g\in W^{2,1}_q(\cO_T)$ on $\partial\cO_T$ 
and $\|v\|_{W^{2,1}_q(\cO_T)}\leq C(\|f\|_{L_q(\cO_T)}+\|g\|_{W^{2,1}_q(\cO_T)}+\|v\|_{L_q(\cO_T)} )$ where the constant $C>0$ depends on $d$, $q$, $\lambda$, $M$, $T$ and the modulus of continuity of $a_{ij}$.
\end{lemma}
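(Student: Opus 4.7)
The plan is to establish the quantitative $W^{2,1}_q$ estimate first, and then obtain existence by the method of continuity; uniqueness will come essentially for free from the estimate. All three pieces are classical and follow the approach of Ladyzhenskaya–Solonnikov–Uraltseva (or the Wu–Yin–Wang textbook cited by the paper); I would reconstruct them as follows.

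First, for the a priori estimate, the starting point is the Calderón–Zygmund theory for the constant coefficient heat operator: if $u$ solves $\partial_t u - \Delta u = F$ on $\mathbb{R}^{d+1}$, representing $\nabla^2 u$ and $\partial_t u$ through the heat kernel expresses them as parabolic singular integrals of $F$, which are bounded on $L^q$ for $q\in(1,\infty)$. To pass to variable coefficients I would use the Agmon–Douglis–Nirenberg freezing argument. Using the $C^2$ boundary regularity I straighten $\partial\cO$ via local charts and use a partition of unity; on each small chart I freeze $(a_{ij})$ at the centre $x_0$ and treat $a_{ij}(\cdot) - a_{ij}(x_0)$ as a perturbation. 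Uniform continuity of $a_{ij}$ forces this perturbation to be arbitrarily small in $L^\infty$ on sufficiently small charts, so the second-order error term in $L^q$ can be absorbed into the left-hand side of the Calderón–Zygmund bound for the frozen operator. The lower-order terms are handled by Young's inequality and interpolation: $\|b_i\nabla_i v\|_{L_q}\leq \varepsilon\|\nabla^2 v\|_{L_q}+C_\varepsilon\|v\|_{L_q}$, and analogously for $cv$. Summing the local estimates through the partition of unity, and controlling the boundary data via a standard extension of $g$, yields the claimed global bound.

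Once the a priori estimate is available, existence follows from the method of continuity. I would connect the operator $L$ defined by the given $(a_{ij}, b_i, c)$ to the heat operator through the family $L_s = (1-s)(\partial_t - \Delta) + sL$, $s\in[0,1]$. Each $L_s$ satisfies the structural assumptions of the lemma with constants uniform in $s$, so the a priori estimate applies uniformly. Combined with the fact that $L_0$ is surjective (solvability of the heat equation being well known), the usual continuity argument yields solvability of $L_1 v = f$ with $v|_{\partial\cO_T}=g$ in $W^{2,1}_q(\cO_T)$. For uniqueness one argues directly: if $v_1, v_2$ are two solutions, their difference $w$ satisfies the homogeneous problem with zero parabolic boundary data, and either the a priori estimate applied on successive small time slabs together with a Grönwall argument, or a maximum-principle estimate after reducing to bounded right-hand side via the equation itself, forces $w \equiv 0$.

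The main obstacle is the a priori estimate: producing it with constants that depend only on $d, q, \lambda, M, T$ and the modulus of continuity of $(a_{ij})$ requires careful bookkeeping when straightening the boundary, since the transformed coefficients must be verified to inherit the same uniform ellipticity, the same uniform bound $M$, and a controlled modulus of continuity, and the partition-of-unity radius must be chosen based on that modulus rather than on any pointwise data. Everything else (continuity method, uniqueness) is essentially mechanical once this estimate is in hand.
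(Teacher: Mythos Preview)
The paper does not actually prove this lemma: it simply states the result and cites \cite[Theorems 9.2.3 and 9.2.5]{wu2006elliptic} for the proof. Your sketch follows exactly the classical route one finds in that reference (or in Ladyzhenskaya--Solonnikov--Uraltseva): Calder\'on--Zygmund estimates for the constant-coefficient heat operator, the freezing-of-coefficients/perturbation argument exploiting the modulus of continuity of $a_{ij}$, interpolation to absorb the lower-order terms, and then existence via the method of continuity. So your proposal is correct and is in fact a reconstruction of the textbook argument the paper defers to; there is nothing to compare beyond noting that the paper treats the lemma as a black-box citation while you have unpacked it.
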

\begin{lemma}
\label{lem:cont_u_to_V_u}
Suppose Assumptions \ref{ass:SDE_well_posed},  
\ref{ass:mirror_reg} and
\ref{ass:lin_growth_derivative} hold.
Assume $Z,Z'\in B_b(\cO_T;\mathbb{R}^p)$.
Then there exists a constant $C>0$ depending on $d$, $q$, $\kappa$, $K$, $T$ and $|\cO|$ 
such that
\begin{equation}
\label{eqn:continuity_v_u}
\begin{split}
\|V^u&-V^{u'}\|_{W^{2,1}_q(\cO_T)}\\
&\leq C(1+\tau)(1+\|g\|_{C^2(\overline{\cO})}+\|Z\|_{B_b(\cO_T;\mathbb{R}^p)}+\|Z'\|_{B_b(\cO_T;\mathbb{R}^p)})\|Z-Z'\|_{B_b(\cO_T;\mathbb{R}^p)},
\end{split}
\end{equation}
where $u=\nabla\psi^*(Z)$ and $u'=\nabla\psi^*(Z')$.
\end{lemma}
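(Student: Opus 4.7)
The approach is to write $w := V^u - V^{u'}$, with $u = \nabla\psi^*(Z)$ and $u' = \nabla\psi^*(Z')$, as the solution of a linear parabolic PDE with zero boundary data, then invoke the $W^{2,1}_q$ a priori estimate of Lemma~\ref{lem:parabolic_PDE}. First one checks $u, u' \in \mathcal U_\psi^\tau$ using the bound $|\psi(\nabla\psi^*(\cdot))| \le K(1+|\cdot|)$ in Assumption~\ref{ass:mirror_reg}, so that Theorem~\ref{thm:pde_well_posedness} applies to both value functions. Subtracting the two on-policy Bellman equations~\eqref{eqn:onpolicy_bellman} gives
\begin{equation*}
\partial_t w + \mathcal L^{u'} w + F = 0 \text{ a.e.\ in } \cO_T, \qquad w = 0 \text{ on } \partial \cO_T,
\end{equation*}
with source
\begin{equation*}
F = (b^u - b^{u'}) \cdot \nabla V^u + (f^u - f^{u'}) + \tau (\rho^u - \rho^{u'}).
\end{equation*}
The coefficients of $\mathcal L^{u'}$ satisfy the hypotheses of Lemma~\ref{lem:parabolic_PDE} uniformly in $u' \in B_b(\cO_T; A)$, so Lemma~\ref{lem:parabolic_PDE}, combined with a Krylov-type occupation-time bound (valid since $q \ge d+2$) to absorb the lower-order $\|w\|_{L^q}$ term, yields $\|w\|_{W^{2,1}_q(\cO_T)} \le C \|F\|_{L^q(\cO_T)}$.

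It remains to bound $\|F\|_{L^q(\cO_T)}$ by the right-hand side of~\eqref{eqn:continuity_v_u}. From the Hessian bound in Assumption~\ref{ass:mirror_reg}, the mean value theorem gives $|u - u'| \le K \|Z - Z'\|_{B_b}$ pointwise, and Assumption~\ref{ass:lin_growth_derivative} then yields $|b^u - b^{u'}| + |f^u - f^{u'}| \le C \|Z - Z'\|_{B_b}$. Theorem~\ref{thm:pde_well_posedness} together with the embedding~\eqref{eqn:embedding_Sob_Easy} controls the multiplier:
\begin{equation*}
\|\nabla V^u\|_{L^\infty(\cO_T)} \le C \|V^u\|_{W^{2,1}_q(\cO_T)} \le C\bigl(1 + \|g\|_{C^2(\overline\cO)} + \tau \|\rho^u\|_{B_b}\bigr),
\end{equation*}
and the growth bound $|\psi(\nabla\psi^*(\cdot))| \le K(1+|\cdot|)$ translates directly into $\|\rho^u\|_{B_b} \le C(1 + \|Z\|_{B_b})$, thereby controlling the first term of $F$.

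The main obstacle is the Bregman contribution $\rho^u - \rho^{u'} = \psi(u) - \psi(u') - \nabla\psi(u^{(0)})\cdot(u - u')$: because $\psi$ may blow up at $\partial A$, no uniform Lipschitz bound on $\psi$ is available. The delicate convex-combination hypothesis of Assumption~\ref{ass:mirror_reg}, namely $|\nabla\psi(v_\varepsilon(y,y'))| \le K(1 + |y| + |y'|)$, is designed precisely for this step: applying the fundamental theorem of calculus along $\varepsilon \mapsto \varepsilon u + (1-\varepsilon)u' = v_\varepsilon(Z,Z')$ produces
\begin{equation*}
|\psi(u) - \psi(u')| \le C(1 + \|Z\|_{B_b} + \|Z'\|_{B_b})\|Z - Z'\|_{B_b},
\end{equation*}
while $|\nabla\psi(u^{(0)})|$ is bounded by a constant (from the same hypothesis applied at $y = y' = Z^{(0)}$ for any $Z^{(0)}$ with $\nabla\psi^*(Z^{(0)}) = u^{(0)}$, together with the boundedness of $\psi(u^{(0)})$). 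Combining these estimates yields~\eqref{eqn:continuity_v_u}; when $\tau = 0$ the Bregman term vanishes, consistently with the prefactor $(1+\tau)$.
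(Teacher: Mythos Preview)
Your proposal is correct and follows the paper's strategy closely: subtract the on-policy Bellman equations, apply the linear parabolic estimate of Lemma~\ref{lem:parabolic_PDE}, and bound the source $F$ term-by-term, using the convex-combination hypothesis of Assumption~\ref{ass:mirror_reg} for the delicate Bregman contribution. The only substantive difference is in absorbing the lower-order $\|w\|_{L^q}$ term from Lemma~\ref{lem:parabolic_PDE}: the paper applies the maximum principle (Corollary~\ref{cor:max_princ_ext}) directly to the PDE for $w$, obtaining the pointwise bound $|w|\le T\|F\|_{B_b}$, whereas you appeal to a Krylov-type occupation-time estimate (presumably via the Feynman--Kac representation of $w$). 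Both routes are valid; the paper's is purely PDE-based and self-contained within the toolkit already developed. A cosmetic difference is that the paper writes the equation for $w$ with operator $\mathcal{L}^u$ and multiplier $\nabla V^{u'}$, while you make the symmetric choice $(\mathcal{L}^{u'},\nabla V^u)$; this is immaterial.
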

\begin{proof}
Let 
$f^u(t,x)=f(t,x,u(t,x))$ and $b^u(t,x)=b(t,x,u(t,x))$.
Set $w=V^{u}-V^{u'}\in W^{2,1}_q(\cO_T)$. 
From \eqref{eqn:onpolicy_bellman} 
$w=0$ on $\partial\cO_T$ and 
\begin{align}
\label{eqn:incremement_pde}
\partial_t w
+\mathcal{L}^u w+F=0\text{ a.e. in }\cO_T,
\end{align}
where $F=(b^u-b^{u'})\cdot \nabla V^{u'}+f^u-f^{u'}+\tau(\rho^u-\rho^{u'})$.
From Assumption \ref{ass:lin_growth_derivative} for each $(t,x)\in\cO_T$ we have 
\begin{align*}
|F(t,x)|&\leq |b^u(t,x)-b^{u'}(t,x)||\nabla V^{u'}(t,x)|+|f^u(t,x)-f^{u'}(t,x)|+\tau|\rho^u(t,x)-\rho^{u'}(t,x)|\\
&\leq K(1+|\nabla V^{u'}(t,x)|)|(u-u')(t,x)|+\tau|\rho^u(t,x)-\rho^{u'}(t,x)|.
\end{align*}
To bound the final term 
let $v_\varepsilon(y,y')=\varepsilon\nabla\psi^*(y)+(1-\varepsilon)\nabla\psi^*(y')$. 
From the Mean Value Theorem for all $(t,x)\in\cO_T$ we know that 
\begin{align*}
(\rho^u-\rho^{u'})(t,x)
= \int_0^1 \nabla\rho^{v_\varepsilon(Z,Z')}(t,x)
d\varepsilon\cdot(u(t,x)-u'(t,x)).
\end{align*}
Since $\nabla\rho^{v_\epsilon(Z,Z')}(t,x)=\nabla\psi(v_\epsilon(Z(t,x),Z'(t,x)))-\nabla\psi(u^{(0)}(t,x))$
it follows 
from this and Assumption \ref{ass:mirror_reg} that 
\begin{align*}
|(\rho^u-\rho^{u'})(t,x)|\leq K(1+\|Z\|_{B_b(\cO_T;\mathbb{R}^p)}+\|Z'\|_{B_b(\cO_T;\mathbb{R}^p)})|u(t,x)-u'(t,x)|,
\end{align*}
which implies 
\begin{align*}
\|F\|&_{B_b(\cO_T;\mathbb{R})}\\
&\leq K[(1+\|\nabla V^{u'}\|_{C^0(\cO_T)})+\tau (1+\|Z\|_{B_b(\cO_T;\mathbb{R}^p)}+\|Z'\|_{B_b(\cO_T;\mathbb{R}^p)})]\|u-u'\|_{B_b(\cO_T;A)}\\
&\leq C(1+\tau)[1+\|g\|_{C^{2}(\cO)}+\|Z\|_{B_b(\cO_T;\mathbb{R}^p)}+\|Z'\|_{B_b(\cO_T;\mathbb{R}^p)}]\|u-u'\|_{B_b(\cO_T;A)},
\end{align*}
where the second inequality is a consequence of Assumption \ref{ass:mirror_reg} and \eqref{eqn:W_2_p_mu_estimate}.
From Lemma \ref{lem:parabolic_PDE}
there exists a constant $C>0$ such that for any $Z,Z'\in B_b(\cO_T;\mathbb{R}^p)$
\begin{equation}
\label{eqn:W_2_p_proof_help}
\begin{split}
\|w\|_{W^{2,1}_q(\cO_T)}
\leq C(\|F\|_{L_q(\cO_T)}+\|w\|_{L_q(\cO_T)})\\
\end{split}
\end{equation}
From Corollary \ref{cor:max_princ_ext} it follows that 
$-TC(Z,Z')\leq w(t,x)\leq TC(Z,Z')$ a.e. in $\cO_T$ where
\begin{align*}
C(Z,Z'):=C(1+\tau)[1+\|g\|_{C^{2}(\cO)}+\|Z\|_{B_b(\cO_T;\mathbb{R}^p)}+\|Z'\|_{B_b(\cO_T;\mathbb{R}^p)}]\|u-u'\|_{B_b(\cO_T;A)}.
\end{align*}
Therefore $\|V^u-V^{u'}\|_{W^{2,1}_q(\cO_T)}\leq C(Z,Z')\|u-u'\|_{B_b(\cO_T;A)}$.
Let $Z^\varepsilon = (1-\varepsilon)Z+\varepsilon Z'$. From the Mean Value Theorem and Assumption \ref{ass:mirror_reg} 
\begin{align*}
|(u-u')(t,x)|
&\leq\int_0^1| \nabla^2\psi^*(Z^\varepsilon(t,x))[(Z-Z')(t,x)]|d\varepsilon\leq K|(Z-Z')(t,x)|.
\end{align*}
Taking supremums and substituting the above into \eqref{eqn:W_2_p_proof_help} concludes the proof.
\end{proof}
We now want to show that for each $(t,x)$ the map $B_b(\cO_T;\mathbb{R}^p)\ni Z\mapsto V^{\nabla\psi^*(Z)}(t,x)\in\mathbb{R}$ is Hadamard differentiable, see Lemma \ref{lem:mirror_diff}. 
A key step in this is to prove the Hadamard differentiability of the map  $B_b(\cO_T;\mathbb{R}^p)\ni Z\mapsto \nabla\psi^*(Z)\in B_b(\cO_T;A)$. 
This is given in Lemma \ref{lem:mirror_diff}.
\begin{definition}
Let $X,Y$ be Banach spaces. 
We say $\mathcal{I}:X\rightarrow Y$  is Hadamard differentiable if there exists $\partial\mathcal{I}:X\rightarrow\mathcal{L}(X,Y)$, called the differential of $\mathcal{I}$, such that for all $x,v\in X$
and all sequences $\{h_n\}_{n\in\mathbb{N}}\subset(0,1)$ and $\{v_n\}_{n\in\mathbb{N}}\subset X$ such that $\lim_{n\rightarrow\infty}h_n=0$ and $\lim_{n\rightarrow\infty}v_n=v$, we have 
\begin{align*}
\lim_{n\rightarrow\infty}h_n^{-1}(\mathcal{I}(x+h_n v_n)-\mathcal{I}(x))=\partial\mathcal{I}(x)[v].
\end{align*}
\end{definition}

\begin{lemma}
\label{lem:mirror_diff}
Suppose Assumption \ref{ass:mirror_reg} holds.
Then the map $\mathcal{F}:B_b(\cO_T;\mathbb{R}^p)\rightarrow B_b(\cO_T; A)$ defined by
$Z\mapsto \nabla\psi^*(Z)$ is Hadamard differentiable and for any $Z,Z'\in B_b(\cO_T;\mathbb{R}^p)$, we have 
$\partial\mathcal{F}(Z)[Z']=\nabla^2\psi^*(Z)[Z']$.
\end{lemma}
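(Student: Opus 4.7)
The plan is to use a first order Taylor expansion of $\nabla\psi^*$ along the segment joining $Z(t,x)$ and $(Z+h_n Z_n)(t,x)$, and then exploit the two pieces of Assumption~\ref{ass:mirror_reg} separately: the uniform bound $|\nabla^2\psi^*(y)[y']|\leq K|y'|$ to control one term, and the uniform continuity of $\nabla^2\psi^*$ to control the other.

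First I would check that the candidate differential $\partial\mathcal{F}(Z)[Z'](t,x):=\nabla^2\psi^*(Z(t,x))[Z'(t,x)]$ really is a bounded linear operator from $B_b(\cO_T;\mathbb{R}^p)$ to itself: linearity is obvious and the estimate $\|\partial\mathcal{F}(Z)[Z']\|_{B_b(\cO_T;\mathbb{R}^p)}\leq K\|Z'\|_{B_b(\cO_T;\mathbb{R}^p)}$ is immediate from the bound on $\nabla^2\psi^*$ in Assumption~\ref{ass:mirror_reg}.

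Next, fix $Z,Z'\in B_b(\cO_T;\mathbb{R}^p)$ and sequences $h_n\searrow 0$, $Z_n\to Z'$ in $B_b$. Since $\psi^*\in C^2$ on all of $\mathbb{R}^p$, the fundamental theorem of calculus applied pointwise in $(t,x)$ gives
\begin{equation*}
\tfrac{1}{h_n}\bigl(\mathcal{F}(Z+h_nZ_n)-\mathcal{F}(Z)\bigr)(t,x)=\int_0^1\nabla^2\psi^*\bigl(Z(t,x)+\theta h_nZ_n(t,x)\bigr)[Z_n(t,x)]\,d\theta.
\end{equation*}
Subtracting the candidate derivative and splitting yields the decomposition
\begin{equation*}
\int_0^1\bigl[\nabla^2\psi^*(Z+\theta h_nZ_n)-\nabla^2\psi^*(Z)\bigr][Z_n]\,d\theta+\nabla^2\psi^*(Z)[Z_n-Z'].
\end{equation*}
The second summand is bounded pointwise by $K|Z_n-Z'|$, so its $B_b$ norm vanishes as $n\to\infty$. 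For the first summand, note that $\|h_nZ_n\|_{B_b(\cO_T;\mathbb{R}^p)}\leq h_n\bigl(\|Z'\|_{B_b(\cO_T;\mathbb{R}^p)}+\|Z_n-Z'\|_{B_b(\cO_T;\mathbb{R}^p)}\bigr)\to 0$, so the argument of $\nabla^2\psi^*$ inside the integrand varies by an amount that tends to zero uniformly in $(t,x,\theta)\in\cO_T\times[0,1]$. By the uniform continuity of $\nabla^2\psi^*$ on $\mathbb{R}^p$ (in operator norm), the bracketed operator has operator norm going to zero uniformly in $(t,x,\theta)$; since $\|Z_n\|_{B_b(\cO_T;\mathbb{R}^p)}$ is bounded, the integral also vanishes in $B_b$ norm.

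The only slightly delicate point is the uniformity step in the last display, where one has to make sure that uniform continuity of the matrix-valued map $\nabla^2\psi^*$ (as given by Assumption~\ref{ass:mirror_reg}) translates into uniform control of the operator acting on the bounded family $\{Z_n(t,x)\}$; this follows from submultiplicativity of the operator norm and the uniform bound $\sup_n\|Z_n\|_{B_b(\cO_T;\mathbb{R}^p)}<\infty$. Combining the two estimates shows that $h_n^{-1}(\mathcal{F}(Z+h_nZ_n)-\mathcal{F}(Z))\to\nabla^2\psi^*(Z)[Z']$ in $B_b(\cO_T;\mathbb{R}^p)$, which is exactly Hadamard differentiability with the claimed differential.
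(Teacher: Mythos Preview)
Your proof is correct and follows essentially the same approach as the paper: both use the integral form of the mean value theorem to write the difference quotient as $\int_0^1\nabla^2\psi^*(Z+\theta h_nZ_n)[Z_n]\,d\theta$, then split off $\nabla^2\psi^*(Z)[Z_n-Z']$ and handle the two terms with the Lipschitz bound and the uniform continuity of $\nabla^2\psi^*$ respectively. Your treatment of the first term is in fact slightly cleaner than the paper's, since you argue directly from uniform continuity that the operator norm vanishes uniformly in $(t,x,\theta)$, whereas the paper routes this through a dominated convergence argument.
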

\begin{proof}
Let $\{h_n\}_{n\in\mathbb{N}}\subset(0,1)$ satisfy $\lim_{n\rightarrow\infty}h_n=0$, 
let $\{Z'_n\}_{n\in\mathbb{N}}\subset B_b(\cO_T;\mathbb{R}^p)$ satisfy $\lim_{n\rightarrow\infty}Z'_n=Z'$ in $B_b(\cO_T;\mathbb{R}^p)$.
To show the map $\mathcal{F}$ is Hadamard differentiable we want to show that
\begin{align*}
\lim_{n\rightarrow \infty}\|h^{-1}_n(\nabla\psi^*(Z+h_n Z'_n)-\nabla\psi^*(Z))-\nabla^2\psi^*(Z)[Z']\|_{B_b(\cO_T;A)}=0.
\end{align*}
To this end
define $Z^{n,\varepsilon} = \varepsilon (Z+h_n Z')+(1-\varepsilon)Z$. 
From the Mean Value Theorem and Assumption \ref{ass:mirror_reg} we have that for any $(t,x)\in\cO_T$ and $n\in\mathbb{N}$
\begin{align*}
h_n^{-1}&(\nabla\psi^*(Z+h_n Z'_n)-\nabla\psi^*(Z))(t,x)-\nabla^2\psi^*(Z)[Z'(t,x)]\\
&=\int_0^1 h_n^{-1}\nabla^2\psi^*(Z^{n,\varepsilon}(t,x))[h_n Z'_n(t,x)]d\varepsilon-\nabla^2\psi^*(Z(t,x))[Z'(t,x)])\\
&=\int_0^1 \nabla^2\psi^*(Z^{n,\varepsilon}(t,x))[ Z'_n(t,x)]-\nabla^2\psi^*(Z(t,x))[Z'(t,x)]d\varepsilon\\
&=\int_0^1 (\nabla^2\psi^*(Z^{n,\varepsilon}(t,x))-\nabla^2\psi^*(Z(t,x)))[Z'_n(t,x)]d\varepsilon+\int_0^1 \nabla^2\psi^*(Z)[Z'_n-Z'](t,x)d\varepsilon,
\end{align*}
where
the first equality follows from the positive homogenenity of directional derivative ~\cite[Section 2.2.1]{bonnans2013perturbation}.
From Assumption \ref{ass:mirror_reg} we have 
\begin{align*}
    |h_n^{-1}&(\nabla\psi^*(Z+h_n Z'_n)-\nabla\psi^*(Z))(t,x)-\nabla^2\psi^*(Z(t,x))[Z'(t,x)]|\\
    &\leq \int_0^1 |\nabla^2\psi^*(Z^{n,\varepsilon}(t,x))-\nabla^2\psi^*(Z(t,x))||Z'_n(t,x)|+|\nabla^2\psi^*(Z)[(Z'_n-Z')(t,x)]|d\varepsilon\\
    &\leq \int_0^1 |\nabla^2\psi^*(Z^{n,\varepsilon}(t,x))-\nabla^2\psi^*(Z(t,x))||Z'_n(t,x)|d\varepsilon+K\|Z'_n - Z'\|_{B_b(\cO_T;\mathbb{R}^p)}\\
    &\leq C\int_0^1\sup_{t,x\in\cO_T}|\nabla^2\psi^*(Z^{n,\varepsilon}(t,x))-\nabla^2\psi^*(Z(t,x))|d\varepsilon+K\|Z'_n - Z'\|_{B_b(\cO_T;\mathbb{R}^p)},
\end{align*}
where $C=\max_{n\in\mathbb{N}}\|Z'_n\|_{B_b(\cO_T;\mathbb{R}^p)}$.
Therefore 
\begin{align*}
    \|h_n^{-1}&(\nabla\psi^*(Z+h_n Z'_n)-\nabla\psi^*(Z))-\nabla^2\psi^*(Z)[Z']\|_{B_b(\cO_T;A)}\\
    &\leq C\int_0^1 \|\nabla^2\psi^*(Z^{n,\varepsilon})-\nabla^2\psi^*(Z)\|_{B_b(\cO_T;\mathbb{R}^{p\times p})}d\varepsilon+K\|Z'_n - Z'\|_{B_b(\cO_T;\mathbb{R}^p)}.
\end{align*}
We want to show the above converges to zero as $n\rightarrow\infty$.
It suffices to prove 
\begin{align*}
    \lim_{n\rightarrow\infty}\int_0^1 \|\nabla^2\psi^*(Z^{n,\varepsilon})-\nabla^2\psi^*(Z)\|_{B_b(\cO_T;\mathbb{R}^{p\times p})}d\varepsilon=0.
\end{align*}
From the dominated convergence theorem
\begin{align*}
    \lim_{n\rightarrow\infty}\int_0^1 &\|\nabla^2\psi^*(Z^{n,\varepsilon})-\nabla^2\psi^*(Z)\|_{B_b(\cO_T;\mathbb{R}^{p\times p})}d\varepsilon\\
    &=\int_0^1\lim_{n\rightarrow\infty}\|\nabla^2\psi^*(Z^{n,\varepsilon})-\nabla^2\psi^*(Z)\|_{B_b(\cO_T;\mathbb{R}^{p\times p})}d\varepsilon
    =0
\end{align*}
where the final inequality follows from the fact $Z^{n,\varepsilon}\rightarrow Z$ uniformly as $n\rightarrow\infty$ and the Hessian is uniformly continuous due to Assumption \ref{ass:mirror_reg}.
This concludes the proof.
\end{proof}
\begin{lemma}
\label{lem:had_diff}
Suppose Assumptions \ref{ass:SDE_well_posed}, 
\ref{ass:mirror_reg} and \ref{ass:lin_growth_derivative} hold.
Then for each $(t,x)\in \cO_T$
the map $B_b(\cO_T;\mathbb{R}^p)\ni Z\mapsto V^{\nabla\psi^*(Z)}(t,x)\in\mathbb{R}$ is Hadamard differentiable and for each $Z,Z'\in B_b(\cO_T;\mathbb{R}^p)$  
\begin{equation}
\begin{split}
\partial &V^{\nabla\psi^*(Z)}(t,x)[Z']\\
&=\E^{\nabla\psi^*(Z)}_{t,x}\int_t^{T_\cO}\nabla_aH(\cdot,\nabla V^{\nabla\psi^*(Z)},\nabla\psi^*(Z))(t',X_{t'})\cdot \nabla^2\psi^*(Z)[Z'](t',X_{t'})dt'.
\end{split}
\end{equation}
\end{lemma}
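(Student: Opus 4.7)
My plan is to combine the performance difference identity of Lemma \ref{lem:perf_diff} with a first-order Taylor expansion of the Hamiltonian in its action variable, and then pass to the limit via the dominated convergence theorem, using Lemma \ref{lem:mirror_diff} for the derivative of the mirror map and Lemma \ref{lem:cont_u_to_V_u} together with the embedding \eqref{eqn:embedding_Sob_Easy} to handle the convergence of $\nabla V^{u_n}$.

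Fix $(t,x)\in\cO_T$ and $Z,Z'\in B_b(\cO_T;\mathbb{R}^p)$, and take arbitrary sequences $h_n\downarrow 0$ and $Z'_n\to Z'$ in $B_b(\cO_T;\mathbb{R}^p)$. Set $u_n:=\nabla\psi^*(Z+h_nZ'_n)$ and $u:=\nabla\psi^*(Z)$. Applying Lemma \ref{lem:perf_diff} with the pair $(u_n,u)$ and expanding the Hamiltonian difference via Taylor's theorem (legitimate because $a\mapsto H(t,x,z,a)$ is $C^1$ by Assumption \ref{ass:lin_growth_derivative} and the identity $\nabla_a\rho^a=\nabla\psi(a)-\nabla\psi(u^{(0)})$) yields
\[
V^{u_n}(t,x)-V^{u}(t,x)=\E^{u}_{t,x}\int_t^{T_\cO}\int_0^1 [\nabla_a H(\cdot,\nabla V^{u_n},u_n^\varepsilon)\cdot(u_n-u)](r,X_r)\,d\varepsilon\,dr,
\]
where $u_n^\varepsilon:=\varepsilon u_n+(1-\varepsilon)u=v_\varepsilon(Z+h_nZ'_n,Z)$ in the notation of Assumption \ref{ass:mirror_reg}. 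Dividing by $h_n$ produces the difference quotient whose limit I need to identify with the claimed $\partial V^{\nabla\psi^*(Z)}(t,x)[Z']$.

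Passing $n\to\infty$ inside the expectation and the $(r,\varepsilon)$-integrals is done by dominated convergence. Pointwise convergence of the integrand to $\nabla_aH(\cdot,\nabla V^u,u)\cdot\nabla^2\psi^*(Z)[Z']$ follows from four facts, all uniform on $\cO_T$: (i) Lemma \ref{lem:mirror_diff} gives $h_n^{-1}(u_n-u)\to\nabla^2\psi^*(Z)[Z']$; (ii) Lemma \ref{lem:cont_u_to_V_u} combined with \eqref{eqn:embedding_Sob_Easy} gives $\nabla V^{u_n}\to\nabla V^u$; (iii) the mean value theorem with Assumption \ref{ass:mirror_reg} gives $u_n\to u$, hence $u_n^\varepsilon\to u$ uniformly in $\varepsilon$; (iv) the decomposition $\nabla_aH=(\nabla_ab)z+\nabla_af+\tau(\nabla\psi(a)-\nabla\psi(u^{(0)}))$ together with the continuity properties of $\nabla_ab$, $\nabla_af$, and $\nabla\psi$ then yields continuity of $\nabla_aH$ along the convex combinations $u_n^\varepsilon$.

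The main obstacle is producing a uniform integrable majorant along the family $\{u_n^\varepsilon\}$. This is where Assumption \ref{ass:mirror_reg} is indispensable: its Hessian bound together with the mean value theorem yields $|h_n^{-1}(u_n-u)|\le K\sup_n\|Z'_n\|_{B_b(\cO_T;\mathbb{R}^p)}$, and the convex-combination clause gives $|\nabla\psi(v_\varepsilon(Z+h_nZ'_n,Z))|\le K(1+\|Z\|_{B_b(\cO_T;\mathbb{R}^p)}+\|Z+h_nZ'_n\|_{B_b(\cO_T;\mathbb{R}^p)})$, which controls the Bregman contribution to $\nabla_aH$. Together with Assumption \ref{ass:lin_growth_derivative} and the uniform bound on $\|\nabla V^{u_n}\|_{C^0}$ coming from Lemma \ref{lem:cont_u_to_V_u} and \eqref{eqn:embedding_Sob_Easy}, these majorants bound the integrand uniformly in $n$ and $\varepsilon$. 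The subtlety is that $u_n^\varepsilon$ need not lie in $\nabla\psi^*(\mathbb{R}^p)$, so the Lipschitz continuity of $\mathcal{C}=\nabla\psi\circ\nabla\psi^*$ alone is insufficient; it is precisely the convex-combination clause of Assumption \ref{ass:mirror_reg} that closes this gap.
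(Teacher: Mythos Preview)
Your proposal is correct and follows essentially the same route as the paper: apply the performance difference lemma to the pair $(u_n,u)$, expand the Hamiltonian increment via the mean value theorem along the convex interpolations $u_n^\varepsilon=v_\varepsilon(Z+h_nZ'_n,Z)$, and pass to the limit by dominated convergence using Lemma~\ref{lem:mirror_diff}, Lemma~\ref{lem:cont_u_to_V_u} with the embedding \eqref{eqn:embedding_Sob_Easy}, and the convex-combination clause of Assumption~\ref{ass:mirror_reg} to dominate the $\nabla\psi$-contribution. The paper organizes the estimate by splitting into three pieces $I_n^{(1)},I_n^{(2)},I_n^{(3)}$ (drift, running cost, regularizer) and, for the regularizer term, is slightly more explicit than your point (iv): it argues that the family $\{u_n^\varepsilon(t,x)\}$ lies in a fixed compact subset of $A$ and then invokes uniform continuity of $\nabla\psi$ there, rather than relying on an unqualified ``continuity of $\nabla\psi$''.
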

\begin{proof}
Let
$\{Z'_n\}_{n\in\mathbb{N}}\subset B_b(\cO_T;\mathbb{R}^p)$ satisfy $Z'_n\rightarrow Z'$ in $B_b(\cO_T;\mathbb{R}^p)$ as $n\rightarrow\infty$,
$\{h_n\}_{n\in\mathbb{N}} \subset (0,1)$ satisfy $h_n\rightarrow 0$ as $n\rightarrow\infty$.
For $k=b,f,\rho$ set $k^n=k^{\nabla\psi^*(Z+h_nZ'_n)}$ and $k^\infty=k^{\nabla\psi^*(Z)}$. 
Let $V^n=V^{\nabla\psi^*(Z+h_nZ'_n)}$ and  $V^\infty=V^{\nabla\psi^*(Z)}$.
Set $\E=\E^{\nabla\psi^*(Z)}_{t,x}$ and 
for each $\varepsilon\in[0,1]$ and $n\in\mathbb{N}$ let 
\begin{align}
    \mathcal{I}^{(n)}_\varepsilon(t,x):=\varepsilon \nabla\psi^*(Z(t,x)+h_nZ'_n(t,x))+(1-\varepsilon)\nabla\psi^*(Z(t,x)).
\end{align}
From Lemma \ref{lem:perf_diff} for each $(t,x)\in\cO_T$
\begin{align*}
\tfrac{ (V^n-V^\infty)(t,x)}{h_n}&=
\E
\int_t^{T_\cO} \tfrac{(b^n-b^\infty)\cdot \nabla V^n}{h_n}
(t,X_t)
dt'+
\E \int_t^{T_{\cO}}\tfrac{(f^n-f^\infty)}{h_n}(t',X_{t'})dt'\\
&\enspace\enspace+
\E \int_t^{T_{\cO}}\tfrac{(\rho^n-\rho^\infty)}{h_n}(t',X_{t'})dt'.
\end{align*}
We will show that
\begin{align*}
\tfrac{(V^{n}-V^\infty)(t,x)}{h_n}&-\E\int_t^{T_\cO}\nabla_aH(\cdot,\nabla  V^\infty,\nabla\psi^*(Z))(t',X_{t'})\cdot \nabla^2\psi^*(Z)[Z'](t',X_{t'})dt'\\
&=:I_n^{(1)}+I^{(2)}_n + I^{(3)}_n,
\end{align*}
converges to zero as $n\rightarrow\infty$,
where
\begin{align*}
I_n^{(1)} &= \E\int_t^{T_{\cO}}\tfrac{(b^n-b^\infty)\cdot \nabla V^n}{h_n}(t',X_{t'}^u) - \nabla_a(b^\infty(t',X_{t'})\cdot \nabla V^\infty(t',X_{t'}))\cdot \nabla^2\psi^*(Z)[Z'](t',X_{t'})dt',\\
I^{(2)}_n &= \E\int_t^{T_{\cO}}
\tfrac{ f^n-f^\infty}{h_n}(t',X_{t'}) - \nabla_a f^\infty(t',X_{t'})\cdot  \nabla^2\psi^*(Z)[Z'](t',X_{t'})dt',\\
I^{(3)}_n &= \E\int_t^{T_{\cO}}
\tfrac{ \rho^n-\rho^\infty}{h_n}(t',X_{t'}) - \nabla_a \rho^\infty(t',X_{t'})\cdot  \nabla^2\psi^*(Z)[Z'](t',X_{t'})dt'.
\end{align*}
For $I_n^{(1)}$ note that for each $(t,x)\in\cO_T$
\begin{align*}
\tfrac{(b^n-b^\infty)\cdot \nabla  V^n}{h_n}(t,x)=\int_0^1
\nabla_a(b(\cdot,\mathcal{I}^{(n)}_\varepsilon)\cdot \nabla  V^n)(t,x))\cdot
\Delta_n(t,x)d\varepsilon,
\end{align*}
where $\Delta_n(t,x)=h_n^{-1}(\nabla\psi^*((Z+h_nZ'_n)(t,x))-\nabla\psi^*(Z(t,x)))$. 
Then 
\begin{align*}
|I_n^{(1)}|&\leq\E\int_t^{T_{\cO}}
\left[\int_0^1|\nabla_a(b(\cdot,\mathcal{I}^{(n)}_\varepsilon)\cdot \nabla  V^n)(t',X_{t'})
-
\nabla_a(b^\infty\cdot \nabla  V^\infty)(t',X_{t'})|d\varepsilon\right]
|\Delta_n(t',X_{t'})|
dt'\\
&\enspace\enspace+\E\int_t^{T_{\cO}}|\nabla_a(b^\infty\cdot \nabla  V^\infty)(t',X_{t'})||\Delta_n(t',X_{t'})- \nabla^2\psi^*(Z)[Z'](t',X_{t'})|dt'\\
&\leq\E\int_t^{T_{\cO}}
\left[\int_0^1|\nabla_ab(t',X_{t'},\mathcal{I}^{(n)}_\varepsilon(t',X_{t'}))
- \nabla_ab^\infty(t',X_{t'})|
|\nabla  V^n(t',X_{t'})|d\varepsilon\right]
|\Delta_n(t',X_{t'})|
dt'\\
&\enspace\enspace+K\|\nabla  V^n-\nabla  V^\infty\|_{C^0(\cO_T)}\E\int_t^{T_{\cO}}
|\Delta_n(t',X_{t'})|
dt'\\
&\enspace\enspace
+K\|\nabla  V^{\infty}\|_{C^0(\cO_T)}\E\int_t^{T_{\cO}}|\Delta_n(t',X_{t'})- \nabla^2\psi^*(Z)[Z'](t',X_{t'})|dt'.
\end{align*}
From Theorem \ref{thm:pde_well_posedness}, \eqref{eqn:embedding_Holder_Sob} and Assumption \ref{ass:ham_structure} there exists a constant $C>0$ such that for any $n\in\mathbb{N}$
\begin{align}
\label{eqn:temp}
\begin{split}
\|\nabla  V^n\|_{C^0(\cO_T)}&\leq C(1+\|Z+h_nZ'_n\|_{B_b(\cO_T;\mathbb{R}^p)})\leq C(1+\|Z\|_{B_b(\cO_T;\mathbb{R}^p)})\\
\|\nabla  V^n-\nabla  V^\infty\|_{C^0(\cO_T)}&\leq Ch_n(1+\|Z\|_{B_b(\cO_T;\mathbb{R}^p)}+\|Z'\|_{B_b(\cO_T;\mathbb{R}^p)})\|Z_n'\|_{B_b(\cO_T;\mathbb{R}^p)}\\
&\leq Ch_n(1+\|Z\|_{B_b(\cO_T;\mathbb{R}^p)}+\|Z'\|_{B_b(\cO_T;\mathbb{R}^p)})
\end{split}
\end{align}
From Lemma \ref{lem:cont_u_to_V_u}, Assumption \ref{ass:lin_growth_derivative} and \eqref{eqn:temp} it follows that 
\begin{align*}
    |I_n^{(1)}|&\leq C(1+\|Z'\|+\|Z\|)\\
    &\quad\times\bigg(\|\Delta_n\|_{B_b(\cO_T;A)}\E\int_t^{T_\cO}|\nabla\psi^*((Z+h_nZ'_n)(t',X_{t'}))-\nabla\psi^*(Z(t',X_{t'}))|dt'\\
    &\quad +h_n\|\Delta_n\|_{B_b(\cO_T;A)}+\|\Delta_n- \nabla^2\psi^*(Z)[Z']\|_{B_b(\cO_T;A)}\bigg)\\
    &\leq C(1+\|Z'\|+\|Z\|)\bigg(\|\Delta_n\|_{B_b(\cO_T;A)}h_n\|Z'_n\|_{B_b(\cO_T;\mathbb{R}^p)}+h_n\|\Delta_n\|_{B_b(\cO_T;A)}\\
    &\quad +\|\Delta_n- \nabla^2\psi^*(Z)[Z']\|_{B_b(\cO_T;A)}\bigg)\\
    &\leq C(1+\|Z'\|+\|Z\|)\bigg(2\|\Delta_n\|_{B_b(\cO_T;A)}h_n+\|\Delta_n- \nabla^2\psi^*(Z)[Z']\|_{B_b(\cO_T;A)}\bigg).
\end{align*}
To see why the right hand side above converges to zero 
note that from Assumption \ref{ass:mirror_reg}, $\|\Delta_n\|_{B_b(\cO_T;A)}\leq K\|Z_n'\|$
which is bounded uniformly in $n$ since  $Z_n'\rightarrow Z'$ in $B_b(\cO_T;\mathbb{R}^p)$ as $n\rightarrow\infty$.
Also $\|\Delta_n-\nabla^2\psi^*(Z)[Z]\|_{B_b(\cO_T;A)}$ converges to zero as $n\rightarrow\infty$ due to Lemma \ref{lem:mirror_diff}.

To prove the convergence of  $I^{(2)}_n$ we proceed as follows.
From
Assumption  \ref{ass:lin_growth_derivative} for any $(t,x)\in\cO_T$ we have that 
\begin{align*}
&f^n(t,x)-f^{\infty}(t,x)=\int_0^1\nabla_af(t,x,\mathcal{I}^\varepsilon_n(t,x))\cdot[\nabla\psi^*((Z+h_nZ_n)(t,x))-\nabla\psi^*(Z(t,x))]d\varepsilon.
\end{align*}
Using similar arguments as for $I^{(1)}_n$
it can be shown that 
\begin{align*}
    |I^{(2)}_n|\leq Ch_n+K\|\Delta_n-\nabla^2\psi^*(Z)[Z']\|_{B_b(\cO_T;A)},
\end{align*}
where the constant $C$ is independent of $n$.
For $I^{(3)}_n$ note that  
\begin{align*}
\rho^n(t,x)-\rho^\infty(t,x)
&=\rho^{\nabla\psi^*(Z+h_n Z'_n)}(t,x)-\rho^{\nabla\psi^*(Z)}(t,x)\\
&= 
\int_0^1\mathcal{G}^n_\varepsilon(t,x)\cdot(\nabla\psi^*(Z+h_nZ_n')-\nabla\psi^*(Z))(t,x)d\varepsilon,
\end{align*}
where $\mathcal{G}^{n}_\varepsilon(t,x)=\nabla\rho^{v_\varepsilon^n(Z,Z'_n)}(t,x)$ and $v_\varepsilon^n(y,y')(t,x)=\varepsilon\nabla\psi^*((y+h_ny'))+(1-\varepsilon)\nabla\psi^*(y)$.
Therefore 
\begin{align*}
|I^{(3)}_n| &\leq \E\int_t^{T_{\cO}}
\int_0^1|\mathcal{G}^n_\varepsilon(t',X_{t'})- \nabla\rho^\infty(t',X_{t'})||\Delta_n(t',X_{t'})|d\varepsilon\\
&\enspace\enspace
+\E\int_t^{T_\cO}
|\nabla \rho^\infty(t',X_{t'}))||\Delta_n(t',X_{t'})-\nabla^2\psi^*(Z)[Z'](t',X_{t'})|dt'\\
&\leq
\|\Delta_n\|_{B_b(\cO_T;A)}
\E\int_t^{T_\cO}\int_0^1|(\mathcal{G}^n_\varepsilon-\nabla\rho^\infty)(t',X_{t'})|d\varepsilon dt'\\
&\quad+\|\nabla \rho^\infty\|_{B_b(\cO_T;\mathbb{R}^p)}
\|\Delta_n-\nabla^2\psi^*(Z)[Z']\|_{B_b(\cO_T;A)}.
\end{align*}
To show the above converges to zero we need to prove $\E\int_t^{T_\cO}|\mathcal{G}^n_\varepsilon(t',X_{t'})-\nabla\psi^\infty(t',X_{t'})|dt'\rightarrow 0$ as $n\rightarrow\infty$. 
To that end note that from Assumption \ref{ass:mirror_reg} for all $\varepsilon\in[0,1]$, $(t,x)\in\cO_T$ and $n\in\mathbb{N}$ we have 
\begin{align*}
|\mathcal{G}^n_\varepsilon(t,x)-\nabla \rho^\infty(t,x)|&\leq|\nabla \rho^\infty(t,x)|+|\mathcal{G}^n_\varepsilon(t,x)|\\
&\leq K(1+\|Z\|_{B_b(\cO_T;\mathbb{R}^p)}+\|Z'\|_{B_b(\cO_T;\mathbb{R}^p)})+|\nabla\psi(\nabla\psi^*(Z(t,x)))|\\
&\leq K(1+\|Z\|_{B_b(\cO_T;\mathbb{R}^p)}+\|Z'\|_{B_b(\cO_T;\mathbb{R}^p)})+\|\mathcal{C}(Z)\|_{B_b(\cO_T;\mathbb{R}^p)}.
\end{align*}
The Dominated Convergence Theorem then asserts that  
\begin{align*}
\lim_{n\rightarrow\infty} \E\int_t^{T_\cO}&\int_0^1|\mathcal{G}^n_\varepsilon(t',X_{t'})-\nabla\rho^\infty(t',X_{t'})|d\varepsilon dt'\\
&=\E\int_t^{T_\cO}\int_0^1\lim_{n\rightarrow\infty}|\mathcal{G}^n_\varepsilon(t',X_{t'})-\nabla\rho^\infty(t',X_{t'})|d\varepsilon dt'.
\end{align*}
Recall that for any $(t,x)\in\cO_T$ we have 
\begin{align*}
\mathcal{G}^n_\varepsilon(t,x)-\nabla\rho(\nabla\psi^*(Z(t,x)))
=\nabla\psi(v_\varepsilon^n(Z(t,x),Z'_n(t,x)))-\nabla\psi(\nabla\psi^*(Z(t,x))),
\end{align*}
therefore it suffices to prove 
\begin{align}
\label{eqn:uni_conv}
    \lim_{n\rightarrow\infty} \sup_{\varepsilon\in[0,1]}\sup_{t,x\in\cO_T}|\nabla\psi(v_\varepsilon^n(Z(t,x),Z'_n(t,x)))-\nabla\psi(\nabla\psi^*(Z(t,x)))|=0.
\end{align}
Firstly, 
since $\lim_{n\rightarrow\infty}Z'_n=Z'$ in $B_b(\cO_T;\mathbb{R}^p)$ the set $\{(Z+h_nZ'_n)(t,x):n\in\mathbb{N},(t,x)\in\cO_T\}$ is contained in a compact in $\mathbb{R}^p$.
Therefore using the continuity of $\nabla\psi^*$ the sets
$\{\nabla\psi^*((Z+h_nZ'_n)(t,x)): n\in\mathbb{N}, (t,x)\in\cO_T\}$ and $\{\nabla\psi^*(Z(t,x)):(t,x)\in\cO_T\}$ are contained in compact sets in $A$.
In particular from the uniform continuity of $\nabla\psi^*$ this implies $\{v_\varepsilon^n(Z(t,x),Z'_n(t,x))):n\in\mathbb{N},\epsilon\in[0,1],(t,x)\in\cO_T\}$ is contained in a compact set in $A$.
From Assumption~\ref{ass:SDE_well_posed} $\nabla\psi$ is uniformly continuous on compact subsets of $A$, 
meaning that in order to prove \eqref{eqn:uni_conv} it suffices to prove $v_\varepsilon^n(Z,Z+h_nZ'_n)$ converges to $v_\varepsilon^n(Z,Z)=\nabla\psi^*(Z)$ uniformly.
To this end we have 
\begin{align*}
    |v_\varepsilon^n(Z(t,x),(Z+h_nZ'_n)(t,x)) - \nabla\psi^*(Z(t,x))|&=\varepsilon|\nabla\psi^*((Z+h_nZ'_n)(t,x))-\nabla\psi^*(Z(t,x))|\\
    &\leq K\varepsilon h_n|Z'_n(t,x)|.
\end{align*}
Taking supremums over $\varepsilon\in[0,1]$ and $(t,x)\in\cO_T$ and recalling $\lim_{n\rightarrow\infty}Z'_n=Z'$ in $B_b(\cO_T;\mathbb{R}^p)$ concludes the proof.
\end{proof}

\section{Existence of unique solutions and Convergence of the Mirror Flow}
\label{sec:well_posed_decrease_conv}
This section is devoted to proving the well-posedness and convergence of the mirror flow \eqref{eqn:grad_flow}.
The regularity properties from Section \ref{sec:reg_and_pd} will be used to show the map $B_b(\cO_T;\mathbb{R}^p)\ni Z\mapsto \nabla_a H(\cdot, \nabla V^{\nabla\psi^*(Z)},\nabla\psi^*(Z))\in B_b(\cO_T;\mathbb{R}^p)$ is locally Lipschitz continuous.
Theorem \ref{thm:grad_flow_well_posed} then follows from a truncation argument.
Define $\mathcal{I}:B_b(\cO_T;\mathbb{R}^p)\rightarrow B_b(\cO_T;\mathbb{R}^p)$ by 
\begin{align}
\mathcal{I}(Z)=-\nabla_aH(\cdot,\cdot,\nabla V^{\nabla\psi^*(Z)},\nabla\psi^*(Z)),
\end{align}
and for each $N>0$ define the truncated operator $\mathcal{I}_N:B_b(\cO_T;\mathbb{R}^p)\rightarrow B_b(\cO_T;\mathbb{R}^p)$
\begin{align*}
\mathcal{I}_N(Z)=\begin{cases}
\mathcal{I}(Z) & \|Z\|_{B_b(\cO_T;\mathbb{R}^p)}\leq N\\
\mathcal{I}\bigg(\tfrac{N Z}{\|Z\|_{B_b(\cO_T;\mathbb{R}^p)}}\bigg) & \|Z\|_{B_b(\cO_T;\mathbb{R}^p)}> N
\end{cases}.
\end{align*}
\begin{lemma}
\label{lem:well_posed_trunc}
Let Assumptions \ref{ass:SDE_well_posed},  
\ref{ass:mirror_reg} and
\ref{ass:lin_growth_derivative} hold. 
Then 
for each $N>0$ and $Z^0\in B_b(\cO_T;\mathbb{R}^p)$
there exists a unique $Z\in\cap_{S>0}C^{1}([0,S];B_b(\cO_T;\mathbb{R}^p))$ such that $\tfrac{d}{ds}Z_s=\mathcal{I}_N(Z)$ with $Z_0 = Z^0$.
\end{lemma}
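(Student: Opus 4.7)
The plan is to apply the Picard--Lindel\"of theorem in the Banach space $B_b(\cO_T;\mathbb{R}^p)$. Since this is the standard ODE existence theorem in a Banach space, it suffices to establish that the right-hand side $\mathcal{I}_N$ is \emph{globally} Lipschitz continuous on $B_b(\cO_T;\mathbb{R}^p)$. A direct application then yields a unique global solution $Z \in C^1([0,\infty);B_b(\cO_T;\mathbb{R}^p))$, which in particular belongs to $\cap_{S>0}C^1([0,S];B_b(\cO_T;\mathbb{R}^p))$.

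The core work is to prove that the untruncated operator $\mathcal{I}$ is \emph{locally} Lipschitz. Writing $u = \nabla\psi^*(Z)$ and $u' = \nabla\psi^*(Z')$, we decompose
\begin{equation*}
\mathcal{I}(Z) - \mathcal{I}(Z')
= -\bigl[\nabla_a b^{u}\cdot \nabla V^{u} - \nabla_a b^{u'}\cdot \nabla V^{u'}\bigr]
- \bigl[\nabla_a f^{u} - \nabla_a f^{u'}\bigr]
- \tau\bigl[\nabla_a \rho^{u} - \nabla_a \rho^{u'}\bigr],
\end{equation*}
and estimate each piece separately. For the $f$-term we use that $|\nabla_a f|+|\nabla^2_a f|\leq K$ (Assumption~\ref{ass:lin_growth_derivative}), combined with the bound $|u-u'|\leq K|Z-Z'|$ coming from Assumption~\ref{ass:mirror_reg} and the mean value theorem. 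For the $b$-term we split
\begin{equation*}
\nabla_a b^{u}\cdot\nabla V^{u}-\nabla_a b^{u'}\cdot\nabla V^{u'}
=(\nabla_a b^{u}-\nabla_a b^{u'})\cdot\nabla V^{u}+\nabla_a b^{u'}\cdot(\nabla V^{u}-\nabla V^{u'}),
\end{equation*}
then use Assumption~\ref{ass:lin_growth_derivative}, Lemma~\ref{lem:cont_u_to_V_u} and the embedding \eqref{eqn:embedding_Sob_Easy} to bound $\|\nabla V^{u}-\nabla V^{u'}\|_{B_b}$ in terms of $\|Z-Z'\|_{B_b}$ with a constant depending only on $\|Z\|_{B_b}+\|Z'\|_{B_b}$. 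For the $\rho$-term, recall $\nabla_a\rho^{a}(t,x)=\nabla\psi(a)-\nabla\psi(u^{(0)}(t,x))$, and since $u=\nabla\psi^*(Z)$ one has $\nabla_a\rho^{u}(t,x)=\mathcal{C}(Z(t,x))-\nabla\psi(u^{(0)}(t,x))$ where $\mathcal{C}$ is globally Lipschitz by Assumption~\ref{ass:mirror_reg}. Summing these estimates produces a bound of the form
\begin{equation*}
\|\mathcal{I}(Z)-\mathcal{I}(Z')\|_{B_b(\cO_T;\mathbb{R}^p)}
\leq C\bigl(1+\|Z\|_{B_b}+\|Z'\|_{B_b}\bigr)\|Z-Z'\|_{B_b(\cO_T;\mathbb{R}^p)},
\end{equation*}
showing local Lipschitz continuity of $\mathcal{I}$.

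To pass from $\mathcal{I}$ to $\mathcal{I}_N$, observe that the radial truncation $T_N(Z):=Z$ for $\|Z\|_{B_b}\leq N$ and $T_N(Z):=NZ/\|Z\|_{B_b}$ otherwise maps $B_b(\cO_T;\mathbb{R}^p)$ into the closed ball of radius $N$ and is itself Lipschitz (with constant at most $2$), which follows from the telescoping identity $Z/\|Z\|-Z'/\|Z'\|=(Z-Z')/\|Z\|+Z'(\|Z'\|-\|Z\|)/(\|Z\|\|Z'\|)$. Since $\mathcal{I}_N=\mathcal{I}\circ T_N$ and $\mathcal{I}$ is Lipschitz with constant $C_N:=C(1+2N)$ on the ball of radius $N$, the composition $\mathcal{I}_N$ is globally Lipschitz on $B_b(\cO_T;\mathbb{R}^p)$ with constant $2C_N$. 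Standard Picard--Lindel\"of in Banach spaces then yields a unique $Z\in C^1([0,\infty);B_b(\cO_T;\mathbb{R}^p))$ satisfying $\tfrac{d}{ds}Z_s=\mathcal{I}_N(Z_s)$ with $Z_0=Z^0$.

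The main obstacle is controlling the $b$-term, which requires bounding $\|\nabla V^{u}-\nabla V^{u'}\|_{B_b}$ uniformly for $u=\nabla\psi^*(Z)$ with $Z$ in a ball. Here the full strength of Lemma~\ref{lem:cont_u_to_V_u}, which itself relies on the Sobolev estimates of Section~\ref{sec:reg_and_pd} and the mean-value control of $\rho^{u}-\rho^{u'}$ via Assumption~\ref{ass:mirror_reg}, is essential; the $\rho$-term itself is easier because $\mathcal{C}$ is globally Lipschitz by assumption.
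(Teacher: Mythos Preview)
Your proposal is correct and follows essentially the same route as the paper: both establish the local Lipschitz estimate $\|\mathcal{I}(Z)-\mathcal{I}(Z')\|\leq C(1+\|Z\|+\|Z'\|)\|Z-Z'\|$ via the same term-by-term decomposition (using Assumption~\ref{ass:lin_growth_derivative}, Lemma~\ref{lem:cont_u_to_V_u} with the Sobolev embedding, and the Lipschitz map $\mathcal{C}$ from Assumption~\ref{ass:mirror_reg}), then pass to global Lipschitz continuity of $\mathcal{I}_N$ and apply a fixed-point argument. The only cosmetic difference is that the paper writes out the contraction explicitly in a Bielecki-weighted norm on $C([0,S];B_b)$, whereas you invoke Picard--Lindel\"of as a black box after making the Lipschitz property of the radial truncation $T_N$ explicit; these are equivalent.
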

\begin{proof}
Throughout this proof $\|\cdot\|:=\|\cdot\|_{B_b(\cO_T;\mathbb{R}^p)}$. 
We will first show that the map $B_b(\cO_T;\mathbb{R}^p)\ni Z\mapsto \mathcal{I}_N(Z)$ is globally Lipschitz. 
Using repeated indices to represent summations we can write
\begin{equation}
\label{eqn:diff_ops}
\begin{split}
\mathcal{I}(Z)&-\mathcal{I}(Z')\\
&
=
(\nabla_{a_j}b_i^{\nabla\psi^*(Z')}-\nabla_{a_j}b_i^{\nabla\psi^*(Z)})
\nabla_{x_i}V^{\nabla\psi^*(Z)} 
+ \nabla_{a_j}b_i^{\nabla\psi^*(Z')}
(\nabla_{x_i}V^{\nabla\psi^*(Z')}-\nabla_{x_i}V^{\nabla\psi^*(Z)}) \\
&\enspace\enspace+ (\nabla_{a_j}f^{\nabla\psi^*(Z)} - \nabla_{a_j}f^{{\nabla\psi^*(Z')}})+\tau(\nabla_{a_j}\rho^{\nabla\psi^*(Z)} - \nabla_{a_j}\rho^{{\nabla\psi^*(Z')}}).
\end{split}
\end{equation}
From Assumption \ref{ass:mirror_reg} we have that
\begin{align*}
\nabla\rho^{\nabla\psi^*(Z)} - \nabla\rho^{{\nabla\psi^*(Z')}}
=
\nabla\psi(\nabla\psi^*(Z))-\nabla\psi(\nabla\psi^*(Z'))=\mathcal{C}(Z)-\mathcal{C}(Z').
\end{align*}
From Assumption \ref{ass:lin_growth_derivative} we thus get 
\begin{align*}
\|\mathcal{I}(Z)-\mathcal{I}(Z')\|
&\leq 
K(\|\nabla V^{\nabla\psi^*(Z)}\|_{B_b(\cO_T;\mathbb{R}^d)}+1)\|\nabla\psi^*(Z)-\nabla\psi^*(Z')\|\\
&\enspace\enspace+K\|\nabla V^{\nabla\psi^*(Z')}-\nabla V^{\nabla\psi^*(Z)}\|_{B_b(\cO_T;\mathbb{R}^d)}+\tau\|Z-Z'\|.
\end{align*}
From Assumption \ref{ass:mirror_reg}, the embedding \eqref{eqn:embedding_Holder_Sob},
Lemma \ref{lem:cont_u_to_V_u} and Theorem \ref{thm:HJB_well_posed}
there exists a constant $C>0$
such that for any $Z,Z'\in B_b(\cO_T;\mathbb{R}^p)$
\begin{align*}
\|\mathcal{I}(Z)-\mathcal{I}(Z')\|
&\leq 
C(1+\tau+\tau\|Z\|)
\|\nabla\psi^*(Z)-\nabla\psi^*(Z')\|\\
&\enspace\enspace+C(1+\tau)(1+\|Z\|+\|Z'\|)\|Z-Z'\|+\tau\|Z-Z'\|\\
&\leq C(1+\tau)(1+\|Z\|+\|Z'\|)\|Z-Z'\|.
\end{align*}
In particular  there exists a constant $C>0$ such that for any $Z,Z'\in B_b(\cO_T;\mathbb{R}^p)$
$\|\mathcal{I}_N(Z)-\mathcal{I}_{N}(Z')\|\leq C(1+\tau)(1+2N)\|Z-Z'\|.$
Let $S>0$ be arbitrary.
Let $X_{S,\eta}:=C([0,S];B_b(\cO_T;\mathbb{R}^p))$ be equipped with the norm $\|k\|_{S,\eta}:=\sup_{s\in[0,S]}e^{-\eta s}\|k_s\|$ and define $\Psi:X_{S,\eta}\rightarrow X_{S,\eta}$ by $\Psi(Z)_s=Z^0+\int_0^s\mathcal{I}_N(Z_{s'})ds'$.
Note that $X_{S,\eta}$ is a Banach space.
We will show that $\Psi$ is a contraction on $X_{S,\eta}$ for an appropriately picked $\eta$. 
To that end let $C_N=C(1+\tau)(1+2N)$.
Then for any $s>0$
\begin{align*}
\|\Psi(Z)_s-\Psi(Z')_s\|&\leq\int_0^s\|\mathcal{I}_N(Z_{s'})-\mathcal{I}_N(Z'_{s'})\|ds'
\leq C_N\int_0^se^{-\eta s'}\|Z_{s'}-Z'_{s'}\|e^{\eta s'}ds'\\
&\leq\eta^{-1}C_N\|Z-Z'\|_{\eta}\leq\eta^{-1}C_N\|Z-Z'\|_{\eta}e^{\eta s}.
\end{align*}
Multiplying through by $e^{-\eta s}$, 
taking supremums over $[0,S]$
and picking $\eta_0 = C_{N} + 1$ implies $\|\Psi(Z)-\Psi(Z')\|_{S,\eta_0}\leq\tfrac{C_{N}}{C_{N}+1}\|Z-Z'\|_{S,\eta_0}$.
Therefore from Banach's fixed point theorem there exists a unique $Z\in (C([0,S],B_b(\cO_T;\mathbb{R}^p)),\|\cdot\|_{\eta_0})$ such that $Z_s = Z^0 + \int_0^s\mathcal{I}_N(Z_{s'})ds'$.
Noting that the norms $\|\cdot\|_{S,\eta_0}$ and $\|\cdot\|_{S,0}$ are equivalent we have shown $Z\in (C([0,S];B_b(\cO_T;\mathbb{R}^p)),\|\cdot\|_{0})$.
Also
since it is easy to verify the map $s\mapsto \mathcal{I}_N(Z_s)$ is continuous
the fundamental theorem of calculus implies $u$ is differentiable, i.e. $Z\in C^{1}([0,S];B_b(\cO_T;\mathbb{R}^p))$.
Since $S$ was arbitrary we have that $Z\in\cap_{S>0}C^{1}([0,S];B_b(\cO_T;\mathbb{R}^p))$
This concludes the proof.
\end{proof}

\begin{lemma}
\label{lem:existence_aprior}
Suppose Assumptions \ref{ass:SDE_well_posed},  
\ref{ass:mirror_reg} and
\ref{ass:lin_growth_derivative} hold and $Z\in \cap_{S>0}C([0,S];B_b(\cO_T;\mathbb{R}^p))$ satisfies $\tfrac{d}{ds}Z_s = -\mathcal{I}(Z_s)$ with $Z_0=Z^0$ for some $Z^0\in B_b(\cO_T;\mathbb{R}^p)$.
Then there exists $C>0$ such that for any $s>0$,
$\|Z_s\|\leq (\|Z^0\|+s)e^{Cs}$.
\end{lemma}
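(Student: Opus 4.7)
The strategy is to establish that the vector field $\mathcal{I}:B_b(\cO_T;\mathbb{R}^p)\to B_b(\cO_T;\mathbb{R}^p)$ has at most linear growth, and then to apply a Grönwall argument to the integrated form of the flow $Z_s = Z^0 - \int_0^s \mathcal{I}(Z_{s'})\,ds'$. Concretely, the plan is to prove that for a constant $C>0$ depending only on the problem data,
\[
\|\mathcal{I}(Z)\|_{B_b(\cO_T;\mathbb{R}^p)} \leq C\bigl(1 + \|Z\|_{B_b(\cO_T;\mathbb{R}^p)}\bigr) \quad \text{for every } Z\in B_b(\cO_T;\mathbb{R}^p).
\]

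To obtain this linear growth bound I would decompose
$-\mathcal{I}(Z) = \nabla_a b^{\nabla\psi^*(Z)}\cdot \nabla V^{\nabla\psi^*(Z)} + \nabla_a f^{\nabla\psi^*(Z)} + \tau\, \nabla_a \rho^{\nabla\psi^*(Z)}$
and treat each term in turn. The first factors $\nabla_a b$ and $\nabla_a f$ are uniformly bounded by $K$ by Assumption~\ref{ass:lin_growth_derivative}. For the regulariser gradient, one computes $\nabla_a \rho^{\nabla\psi^*(Z)} = \nabla\psi(\nabla\psi^*(Z)) - \nabla\psi(u^{(0)}) = \mathcal{C}(Z) - \nabla\psi(u^{(0)})$, and the Lipschitz continuity of $\mathcal{C}$ from Assumption~\ref{ass:mirror_reg} yields a pointwise bound of the form $C(1 + |Z|)$. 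The substantive step is to bound $\|\nabla V^{\nabla\psi^*(Z)}\|_{\infty}$ linearly in $\|Z\|$: combining the $W^{2,1}_q$ estimate \eqref{eqn:W_2_p_mu_estimate} with the embedding \eqref{eqn:embedding_Sob_Easy} reduces this to a linear control of $\|\rho^{\nabla\psi^*(Z)}\|_{B_b}$, which in turn follows from the growth condition $|\psi(\nabla\psi^*(y))|\leq K(1+|y|)$ in Assumption~\ref{ass:mirror_reg} together with $\|\psi(u^{(0)})\|_{B_b}\leq K$ and boundedness of $A$.

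Once linear growth is in hand, the integral inequality
\[
\|Z_s\|_{B_b(\cO_T;\mathbb{R}^p)} \leq \|Z^0\|_{B_b(\cO_T;\mathbb{R}^p)} + C\int_0^s \bigl(1 + \|Z_{s'}\|_{B_b(\cO_T;\mathbb{R}^p)}\bigr)\,ds'
\]
combined with Grönwall's lemma (applied with the non-decreasing envelope $s\mapsto \|Z^0\| + Cs$) delivers $\|Z_s\|\leq(\|Z^0\|+Cs)e^{Cs}$, which is the announced bound after a harmless adjustment of the constant. The main obstacle I anticipate is ensuring that the control of $\|\rho^{\nabla\psi^*(Z)}\|_{B_b}$ is genuinely linear in $\|Z\|$: it is here that several structural conditions of Assumptions~\ref{ass:SDE_well_posed} and~\ref{ass:mirror_reg} have to be combined delicately so that the bound on $\|\nabla V^{\nabla\psi^*(Z)}\|_{\infty}$ coming from~\eqref{eqn:W_2_p_mu_estimate} does not inflate to a higher power of $\|Z\|$ when fed back through $\nabla_a b\cdot \nabla V^{\nabla\psi^*(Z)}$.
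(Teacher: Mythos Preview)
Your proposal is correct and follows essentially the same route as the paper: decompose $\nabla_a H$ into the drift, running-cost and regulariser contributions, use Assumption~\ref{ass:lin_growth_derivative} for $\nabla_a b,\nabla_a f$, use Assumption~\ref{ass:mirror_reg} to get $\|\nabla_a\rho^{\nabla\psi^*(Z)}\|\leq C(1+\|Z\|)$, combine the $W^{2,1}_q$ estimate~\eqref{eqn:W_2_p_mu_estimate} with the embedding to control $\|\nabla V^{\nabla\psi^*(Z)}\|$ linearly in $\|Z\|$, and finish with Gr{\"o}nwall. The paper arranges the final integral inequality as $\|Z_s\|\leq(\|Z_0\|+s)+C\int_0^s\|Z_{s'}\|\,ds'$ to land directly on the stated form $(\|Z^0\|+s)e^{Cs}$, whereas you obtain $(\|Z^0\|+Cs)e^{Cs}$; as you note, this is absorbed into the constant.
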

\begin{proof}
Let $\|\cdot\|=\|\cdot\|_{B_b(\cO_T;\mathbb{R}^p)}$.
Then 
\begin{align}
\label{eqn:aprioi_bound_1}
\begin{split}
\|Z_s\|&\leq\|Z_0\|+\int_0^s\|\nabla_aH(\cdot,\nabla V^{\nabla\psi^*(Z_{s'})}),\nabla\psi^*(Z_{s'})\|ds'\\
&\leq\|Z_0\|+\int_0^s K\|\nabla V^{\nabla\psi^*(Z_{s'})}\| + \|\nabla_af^{\nabla\psi^*(Z_{s'})}\|+\|\nabla\rho(\nabla\psi^*(Z_{s'}))\|ds'.
\end{split}
\end{align}
From Assumption \ref{ass:mirror_reg}, $\|\nabla\rho(\nabla\psi^*(Z_{s'}))\|\leq K(1+\|Z_{s'}\|)$. 
Therefore Lemma \ref{thm:pde_well_posedness} asserts there exists a constant independent of $s$ such that $\|\nabla V^{\nabla\psi^*(Z_{s'})}\|\leq C(1+\|\nabla\rho(\nabla\psi^*(Z_{s'}))\|)\leq C(1+\|Z_{s'}\|)$.
Substituting the above into \eqref{eqn:aprioi_bound_1} implies $\|Z_s\|\leq(\|Z_0\|+s)+C\int_0^s\|Z_{s'}\|ds'$. 
Gr{\"o}nwall's inequality concludes the proof.
\end{proof}

\begin{proof}
[Proof of Theorem \ref{thm:grad_flow_well_posed}]
Fix some $S>0$ and set $M>0$ equal to the constant from Lemma \ref{lem:existence_aprior}, i.e. $M=(\|Z_0\|_{B_b(\cO_T;\mathbb{R}^p)}+S)e^{CS}$.
From Lemma \ref{lem:well_posed_trunc} there exists a unique $\tilde{Z}\in\cap_{S>0}C^1([0,S];B_b(\cO_T;\mathbb{R}^p))$ such that $\tfrac{d}{ds}\tilde{Z}_s=-\mathcal{I}_{2M}(\tilde{Z}_s)$.
Define $S_M=\inf\{S>0:\|\tilde{Z}\|_{B_b(\cO_T;\mathbb{R}^p)}\geq 2M\}$. 
For all $s\in [0,S_M]$, $\|\tilde{Z}_s\|_{B_b(\cO_T;\mathbb{R}^p)}\leq 2M$ and therefore $\tfrac{d}{ds}\tilde{Z}_s=-\mathcal{I}(\tilde{Z}_s)$.
We claim that $S_M\geq S$.
To that end assume for the moment $S_{M}<S$.
Then we are led to the following contradiction
$2M\leq \sup_{s\in[0,S]}\|\tilde{Z}_s\|_{B_b(\cO_T;\mathbb{R}^p)}\leq (\|Z_0\|_{B_b(\cO_T;\mathbb{R}^p)}+S)e^{CS}=M$,
where the first inequality follows from the definition of $S_M$ and the assumption $S_M<S$ and the second inequality follows from Lemma \ref{lem:existence_aprior}.
Therefore we have shown that for any $S>0$, there exists $Z\in C^1([0,S];B_b(\cO_T;\mathbb{R}^p))$ such that $\tfrac{d}{ds}Z_s=-\mathcal{I}(Z_s)$, with $Z_0 = Z^0$.
\end{proof}

\begin{proof}
[Proof of Theorem \ref{thm:decreasing_cost_funct}]
The map $[0,\infty)\ni s\mapsto V^{\nabla\psi^*(Z_s)}\in W^{2,1}_q(\cO_T)$ can be viewed as the composition of the maps
$[0,\infty)\ni s\mapsto Z_s\in B_b(\cO_T;\mathbb{R}^p)$ and 
$B_b(\cO_T;\mathbb{R}^p)\ni Z\mapsto V^{\nabla\psi^*(Z)}\in W^{2,1}_q(\cO_T)$.
The first map is differentiable by the assumptions in the statement of the theorem and the second is Hadamard differentiable by Lemma \ref{lem:had_diff}.
Therefore for each $(t,x)\in\cO_T$ the chain rule asserts that 
\begin{align*}
&\tfrac{d}{ds}V^{\nabla\psi^*(Z_s)}(t,x)
=\partial V^{\nabla\psi^*(Z_s)}(\tfrac{d}{ds}\nabla\psi^*(Z_s))(t,x)
=\partial V^{\nabla\psi^*(Z_s)}(\nabla^2\psi^*(Z_s)[\tfrac{d}{ds}Z_s])(t,x)\\
&=\E^{t,x,\nabla\psi^*(Z_s)} \int_0^{T_\cO}\nabla_a H(\cdot,\nabla V^{\nabla\psi^*(Z_s)},\nabla\psi^*(Z_s))(t',X_{s,t'})\cdot\nabla^2\psi^*(Z_s)[\tfrac{d}{ds}Z_s](t',X_{s,t'})dt'\\
&=-\E^{t,x,\nabla\psi^*(Z_s)} \int_0^{T_\cO}
[\tfrac{d}{ds}Z_s]^\top(t',X_{s,t'})\cdot\nabla^2\psi^*(Z_s)[\tfrac{d}{ds}Z_s](t',X_{s,t'})dt'
\end{align*}
where $(X_{s,t'})_{t'\geq t}$ is the weak solution to the controlled SDE corresponding to the control $\nabla\psi^*(Z_s)$.
From Assumption \ref{ass:mirror_reg}, $\psi^*$ is convex and twice continuously differentiable. 
In particular its Hessian is positive semi definite, meaning that $\tfrac{d}{ds}V^{\nabla\psi^*(Z_s)}(t,x)\leq 0$ for all $(t,x)\in\cO_T$.
\end{proof}
\begin{proof}
[Proof of Theorem \ref{thm:lin_conv}]
For each $s>0$ let $u_s=\nabla\psi^*(Z_s)$ and $u^*=\nabla\psi^*(Z^*)$ where $u^*$ is an optimal control. 
From 
the definition of $\mathcal{D}$ and 
the chain rule
\begin{equation}
\label{eqn:chain_rule_bregman}
\begin{split}
\tfrac{d}{ds}\mathcal{D}(Z_s,Z^*)(t,x)&=
\E^{u^*}_{t,x} \int_t^{T_\cO}
\tfrac{d}{ds}\bigg((\psi^*(Z_s)-\psi^*(Z^*))
-\nabla\psi^*(Z^*)\cdot(Z_s-Z^*)
\bigg)(t',X_{t'})
dt'\\
&=\E^{u^*}_{t,x} \int_t^{T_\cO}
\tfrac{d}{ds}\psi^*(Z_s)(t',X_{t'}) - \tfrac{d}{ds}\bigg[\nabla\psi^*(Z^*)\cdot(Z_s)\bigg](t',X_{t'})dt'
\\
&=\E^{u^*}_{t,x} \int_t^{T_\cO}
\bigg[\left(\nabla\psi^*(Z_s)-\nabla\psi^*(Z^*)\right)\cdot\tfrac{d}{ds}Z_s\bigg](t',X_{t'})
dt'.
\end{split}
\end{equation}    
Using the definition of the mirror flow \eqref{eqn:grad_flow}, Assumption \ref{ass:ham_convex}  and Lemma \ref{lem:perf_diff} 
\begin{equation}
\begin{split}
&\tfrac{d}{ds}\mathcal{D}(Z_s,Z^*)(t,x)\\
&=
\E^{u^*}_{t,x} \int_t^{T_\cO}
\bigg[ \left(\nabla\psi^*(Z^*)-\nabla\psi^*(Z_s)\right)\cdot \nabla_aH(\cdot,\nabla V^{\nabla\psi^*(Z_s)},\nabla\psi^*(Z_s))\bigg](t',X_{t'})
dt'\\
&\leq
\E^{u^*}_{t,x} \int_t^{T_\cO}
\bigg[H(\cdot,\nabla V^{\nabla\psi^*(Z_s)},\nabla\psi^*(Z^*))-H(\cdot,\nabla V^{\nabla\psi^*(Z_s)},\nabla\psi^*(Z_s))\bigg](t',X_{t'})
dt'\\
&\enspace\enspace-\tfrac{\lambda}{2}\E^{u^*}_{t,x}\int_t^{T_\cO}D_\psi(\nabla\psi^*(Z^*(t',X_{t'})),\nabla\psi^*(Z_s(t',X_{t'})))dt'\\
&=\left(V^{\nabla\psi^*(Z^*)}-V^{\nabla\psi^*(Z_s)}\right)(t,x)-\tfrac{\lambda}{2}\mathcal{D}(Z_s,Z^*)(t,x),
\end{split}
\end{equation}
where the final equality follows from the assumptions $D_{\psi}(\nabla\psi^*(y),\nabla\psi^*(y'))=D_{\psi^*}(y,y')$.
If $\lambda=0$ then integrating the above inequality from $0$ to $S$ and the non negativity of \eqref{eqn:bregman} implies
\begin{align*}
\int_0^S \left(V^{\nabla\psi^*(Z_s)}-V^{\nabla\psi^*(Z^*)}\right)(t,x)ds\leq \mathcal{D}(Z_0,Z^*)(t,x).
\end{align*}
From Theorem \ref{thm:decreasing_cost_funct} 
\begin{align*}
S\left(V^{\nabla\psi^*(Z_S)}-V^{\nabla\psi^*(Z^*)}\right)(t,x)
\leq \int_0^S \left(V^{\nabla\psi^*(Z_s)}-V^{\nabla\psi^*(Z^*)}\right)(t,x)ds\leq \mathcal{D}(Z_0,Z^*)(t,x).
\end{align*}
This concludes the proof of \eqref{eqn:lin_conv}.
If $\lambda>0$ then 
\begin{align}
\label{eqn:help}
\tfrac{d}{ds}e^{\tfrac{\lambda}{2}s}\mathcal{D}(Z_s,Z^*)(t,x) \leq
e^{\tfrac{\lambda}{2}s}\left(V^{\nabla\psi^*(Z^*)}-V^{\nabla\psi^*(Z_s)}\right)(t,x).
\end{align}
Integrating the above from $0$ to $S$ 
\begin{align*}
e^{\tfrac{\lambda}{2}S}\mathcal{D}(Z_S,Z^*)(t,x)-\mathcal{D}(Z_0,Z^*)(t,x)\leq \int_0^Se^{\tfrac{\lambda}{2}s}\left(V^{\nabla\psi^*(Z^*)}-V^{\nabla\psi^*(Z_s)}\right)(t,x)ds,
\end{align*}
and using the non-negativity of the Bregman divergence 
\begin{align*}
\int_0^Se^{\tfrac{\lambda}{2}s}\left(V^{\nabla\psi^*(Z_s)}-V^{\nabla\psi^*(Z^*)}\right)(t,x)ds\leq \mathcal{D}(Z_0,Z^*)(t,x).
\end{align*}
From Theorem \ref{thm:decreasing_cost_funct} 
\begin{align*}
\left(V^{\nabla\psi^*(Z_S)}-V^{\nabla\psi^*(Z^*)}\right)\leq \tfrac{\mathcal{D}(Z_0,Z^*)(t,x)}{\int_0^Se^{\tfrac{\lambda}{2}s}ds}=\tfrac{\lambda\mathcal{D}(Z_0,Z^*)(t,x)}{2(e^{\tfrac{\lambda}{2}s}-1)}.
\end{align*}
This concludes the proof of \eqref{eqn:exp_con}.
Finally from \eqref{eqn:help} and Theorem \ref{thm:decreasing_cost_funct} we have 
\begin{align*}
\tfrac{d}{ds}\mathcal{D}(Z_s,Z^*)(t,x)\leq-\tfrac{\lambda}{2}\mathcal{D}(Z_s,Z^*)(t,x).
\end{align*}
Gr{\"o}nwall's inequality then concludes the proof of \eqref{eqn:strong_convex_conv}.
\end{proof}

\appendix
\section{Proof of Theorems \ref{thm:SDE_well_posedness_moment_bound},
\ref{thm:pde_well_posedness}
and \ref{thm:HJB_well_posed}}
\label{sec:app_pde_pfs}
\begin{proof}
[Proof of Theorem \ref{thm:pde_well_posedness}]
Let $b^u:\cO_T\rightarrow\mathbb{\mathbb{R}^d}$ be defined by $b^u(t,x)=b(t,x,u(t,x))$.
Define 
$ \tilde{b}^u:[0,\infty)\times\mathbb{R}^d\rightarrow\mathbb{R}^d$ by 
\begin{align}
\label{eqn:exten_b}
\tilde{b}^u(t,x)
=\begin{cases}
b^u(t,x) & (t,x)\in\cO_T \\
0 & \text{otherwise}
\end{cases}.
\end{align}
From Assumption \ref{ass:SDE_well_posed} for each $u\in\mathcal{U}^\tau_\psi$
there exists a constant $C>0$
such that 
\begin{align*}
\|\tilde{b}^u\|_{B_b([0,\infty)\times\mathbb{R}^d;\mathbb{R}^d)}+\|\sigma\|_{B_b([0,\infty)\times\mathbb{R}^d;\mathbb{R}^{d\times d'})}\leq C.
\end{align*}
Since $\sigma$ is uniformly elliptic it follows from~\cite[Theorem 7.2.1]{stroock1997multidimensional} that
for each $t,x\in[0,T]\times\mathbb{R}^d$ there exists a unique in law $(X^{t,x,u}_{t'})_{t'\geq t}$ satisfying $X^{t,x,u}_{t'}=x+\int_{t}^{t'}\tilde{b}^u(r,X^{t,x,u}_r)dr+\int_t^{t'}\sigma(r,X^{t,x,u}_r)dW_r$.
Let $\tau^{t,x,u}_\cO$ be the first exist time of $X^{t,x,u}$ from the region $\cO$.
The uniform ellipticity implies $\tau^{t,x,u}_\cO<\infty$ a.s.
Therefore for each $(t,x)\in\cO_T$, $(X^{t,x,u}_{t'})_{t'\in[t,T\wedge\tau^{t,x,u}_\cO]}$ satisfies \eqref{eqn:controlled_SDE}.
\end{proof}

We first prove Corollary \ref{cor:max_princ_ext} which is a straightforward application of the maximum principle for strong solutions to parabolic PDEs presented in \cite[Appending E]{fleming2012deterministic}.
\begin{corollary}
\label{cor:max_princ_ext}
Suppose Assumption \ref{ass:SDE_well_posed} holds.
Let $b\in B_b(\cO_T; \mathbb{R}^d)$
and $w\in W^{2,1}_q(\cO_T)$  satisfy $w(t,x)=0$ on $\partial\cO_T$ and 
\begin{align*}
\partial_t w(t,x)+\tfrac{1}{2}\text{Tr}(\sigma\sigma^\top \nabla^2w)(t,x)+b(t,x)\cdot w(t,x)\geq -C \enspace (\text{resp.} \leq C) \enspace \text{a.e. } (t,x)\in\cO_T.
\end{align*}
Then $w(t,x)\leq CT \enspace (\text{resp.}\geq -CT)$, for a.e. $(t,x)\in\cO_T$.
\end{corollary}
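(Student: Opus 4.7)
The plan is to apply the parabolic maximum principle quoted in Fleming and Soner, Appendix~E, after a simple linear-in-time tilt kills the inhomogeneous right-hand side. Throughout I read the $b(t,x)\cdot w(t,x)$ term in the hypothesis as $b(t,x)\cdot\nabla w(t,x)$, which is the only dimensionally consistent interpretation and is consistent with the parabolic operator $\mathcal{L}^a$ defined earlier.

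For the first inequality I would introduce $\tilde w(t,x):=w(t,x)-C(T-t)$. Without loss of generality $C\geq 0$, since otherwise the stated conclusion is vacuous. Because $C(T-t)$ is smooth and spatially constant, $\tilde w\in W^{2,1}_q(\cO_T)$ with $\nabla\tilde w=\nabla w$, $\nabla^2\tilde w=\nabla^2 w$ and $\partial_t\tilde w=\partial_t w+C$ almost everywhere. Substituting into the hypothesis gives
\begin{align*}
\partial_t\tilde w+\tfrac12\text{Tr}(\sigma\sigma^\top\nabla^2\tilde w)+b\cdot\nabla\tilde w
=\partial_t w+\tfrac12\text{Tr}(\sigma\sigma^\top\nabla^2 w)+b\cdot\nabla w+C\geq 0
\end{align*}
almost everywhere in $\cO_T$, while $\tilde w=-C(T-t)\leq 0$ on $\partial\cO_T$ because $w$ vanishes on the parabolic boundary and $T-t\geq 0$.

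At this point I would invoke the maximum principle in Fleming-Soner, Appendix~E. Its hypotheses are satisfied: $\sigma\sigma^\top$ is uniformly elliptic with constant $\kappa>0$ and continuous on $\bar\cO_T$ by Assumption~\ref{ass:SDE_well_posed}, $b\in B_b(\cO_T;\mathbb R^d)$ by assumption, and the integrability index $q\geq d+2$ provides the $C^0(\bar\cO_T)$ embedding \eqref{eqn:embedding_Sob_Easy} that allows the classical comparison statement to extend to $W^{2,1}_q$ subsolutions. The conclusion is $\tilde w\leq 0$ a.e.\ in $\cO_T$, whence $w\leq C(T-t)\leq CT$ almost everywhere.

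The resp.\ case follows by applying the same argument to $-w$, which satisfies the analogous inequality with $\geq -C$ in place of $\leq C$; this yields $-w\leq CT$ and hence $w\geq -CT$. There is no substantive obstacle here: the only real judgement is the choice of tilt, and the guiding principle is to pick a function depending only on $t$ that kills the constant forcing while vanishing at the terminal slice $\{t=T\}$, so the boundary inequality is preserved. Everything else is bookkeeping.
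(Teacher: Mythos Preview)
Your proposal is correct and essentially identical to the paper's proof: both subtract the linear-in-time function $C(T-t)$ from $w$ to absorb the constant forcing, verify the resulting function is a subsolution with nonpositive parabolic boundary data, and invoke the Fleming--Soner Appendix~E maximum principle. Your handling of the second case via $-w$ is equivalent to the paper's use of a second auxiliary function $\tilde\phi(t,x)=C(t-T)$, and your observation that $b\cdot w$ should be read as $b\cdot\nabla w$ is correct.
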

\begin{proof}
In order to apply the maximum principle for strong solutions \cite[Appendix E]{fleming2012deterministic} we introduce the function $\phi(t,x)=C(T-t)$.
Then
\begin{align*}
    \begin{aligned}
        \tfrac{\partial (w-\phi)}{\partial t}+\tfrac{1}{2}\text{Tr}(\sigma\sigma^\top \nabla^2(w-\phi))+ b\cdot\nabla (w-\phi)\geq 0 &\quad\text{ a.e. } \in\cO_T \\
        (w-\phi)(\cdot,t)=C(t-T)\leq 0  &\quad\text{ on }\partial\cO_T
    \end{aligned}.
\end{align*}
From the maximum principle, $(w-\phi)(t,x)\leq 0$ a.e. in $\cO_T$ which implies $w(t,x)\leq CT$ for a.e. $(t,x)\in\cO_T$.
For the reverse inequality 
let $\tilde{\phi}(t,x)=C(t-T)$, then $w-\tilde{\phi}$ satisfies 
\begin{align*}
    \begin{aligned}
        \tfrac{\partial (w-\tilde{\phi})}{\partial t}+\tfrac{1}{2}\text{Tr}(\sigma\sigma^\top \nabla^2(w-\tilde{\phi}))+ b\cdot\nabla (w-\tilde{\phi})\leq 0 &\quad\text{ a.e. in }\cO_T\\
(w-\tilde{\phi})(\cdot,t)=C(T-t)\geq 0 & \quad\text{ on }\partial\cO_T
    \end{aligned}.
\end{align*}
Applying the maximum principle implies that for a.e. $(t,x)\in\cO_T$, $-(w-\tilde{\phi})(t,x)\leq 0$ which means $w(t,x)\geq -CT$.
\end{proof}
Following the notation introduced in ~\cite[Appendix $E$]{fleming2012deterministic}
we define the following norms of H{\"o}lder  type.
For $0<\mu\leq 1$ and
for a given 
$h:\cO_T\rightarrow\mathbb{R}$ define
\begin{align*}
    \|h\|_{C^{\mu}(\cO_T)}&:=\|h\|_{C^0(\cO_T)}
+[h]_{\mu;\tfrac{\mu}{2}},\\
\|h\|_{C^{1+\mu}(\cO_T)}&:=\|h\|_{C^\mu(\cO_T)}+\sum_{i=1}^d\|\nabla_{x_i}h\|_{C^\mu(\cO_T)},
\end{align*}
where
$\|h\|_{C^0(\cO_T)}:=\sup_{(t,x)\in\cO_T}|h(t,x)|$ and 
\begin{align}
    [h]_{\mu;\tfrac{\mu}{2}}:=\sup_{t\in[0,T],x,y\in\cO}\tfrac{|h(t,x)-h(t,y)|}{|x-y|^\mu}
+ \sup_{s,t\in[0,T],x\in\cO}\tfrac{|h(t,x)-h(s,x)}{|t-s|^{\tfrac{\mu}{2}}}.
\end{align}
Let $ C^{0,1}(\cO_T)$ represent the space of all function $ h:\cO_T\rightarrow\mathbb{R}$ which are continuous, have a continuous first derivative in space, equipped with the norm $\|h\|_{C^0(\cO_T)}+\sum_{i=1}^d\|\nabla_{x_i} h\|_{C^0(\cO_T)}$.
In order to prove Theorem \ref{thm:HJB_well_posed} recall the following embedding results:
from Arzel\`a-Asocoli's Theorem $C^{1+\mu}(\cO_T)$ is compactly embedded in $C^{0,1}(\cO_T)$ 
and 
as outlined on \cite[Appendix E, Equation E.9]{fleming2012deterministic}
if $q>d+2$ 
then there exists a constant $C$ depending on $\cO_T$ and $q$ such that for any $h\in W^{2,1}_q(\cO_T)$ 
\begin{align}
\label{eqn:embedding_Holder_Sob}
\|h\|_{C^{1+\mu}(\cO_T)}\leq C\|h\|_{W^{2,1}_q(\cO_T)} \text{ with }\mu=1-\tfrac{d+2}{q}.
\end{align}

\begin{proof}
[Proof of Theorem \ref{thm:pde_well_posedness}]
Let $f^u(t,x):= f(t,x,u(t,x))$.
Consider the following boundary value problem
\begin{equation}
\label{eqn:pf_bdry_prob}
    \begin{aligned}
        \tfrac{\partial v}{\partial t} + \mathcal{L}^{u}v = - f^u  -\tau \rho^u & \text{ a.e.  in $\cO_T$}\\
   v=g & \text{ on $\partial\cO_T$}\,.
    \end{aligned}
\end{equation}
From Assumption \ref{ass:SDE_well_posed}, the definition of the class $\mathcal{U}^\tau_\psi$
and Lemma \ref{lem:parabolic_PDE} we see that that there exists 
a unique solution $v\in W^{2,1}_q(\cO_T)$ 
to \eqref{eqn:pf_bdry_prob} such that
\begin{equation}
\label{eqn:helper_bound_1}
\begin{split}
\|v\|_{W^{2,1}_q(\cO_T)}\leq C(1+\tau\|\rho^u\|_{B_b(\cO_T;\mathbb{R})}+\|g\|_{W^{2}_q(\cO)} +\|v\|_{L_q(\cO_T)}),
\end{split}
\end{equation}
where constant $C>0$ depends on $d$, $q$, $\kappa$, $K$ and $T$.
In order to bound $\|v\|_{L_q(\cO_T)}$, we first identify $v$ with $V^u$. 

For each $t,x$ let $X=X^{t,x,u}$.
Applying the generalised Ito's formula  \cite[Ch.~2., Sec.~10, Theorem 1]{krylov2008controlled}  to $v$ and $X$ implies
\begin{align*}
g(X_{T_\cO})&= v(T_{\cO},X_{T_{\cO}})\\
&= v(t,x)+\int_{t}^{T_{\cO}}(\partial_t v^u+\mathcal{L}^uv)(t',X_{t'})dt'+\int_t^{T_{\cO}}\nabla v(t',X_{t'})\sigma(t',X_{t'})dW_{t'} .
\end{align*}
From Assumption \ref{ass:SDE_well_posed} 
and \eqref{eqn:embedding_Holder_Sob}
the stochastic integral is a martingale. 
Therefore
\begin{align*}
v(t,x)
=\E^{u}_{t,x}
\left[\int_t^{T_{\cO}}f(t',X_{t'},u(t',X_{t'}))+\tau\rho^u(t',X_{t'})dt'+g(X_{T_{\cO}})\right]=V^u(t,x).
\end{align*}
Returning to  \eqref{eqn:helper_bound_1} 
it remains to bound the $\|V^u\|_{L_q(\cO_T)}$.
From Assumption \ref{ass:SDE_well_posed}
\begin{align*}
|V^u(t,x)|
&\leq
\E^{u}_{t,x} \left[\int_t^{T_{\cO}}K(1+\tau|\rho^u(t',X_{t'})|)dt'+K\right]\leq C_{K,A,\cO}(1+\tau\|\rho^u\|_{B_b(\cO_T;\mathbb{R})}),
\end{align*}
which concludes the proof.
\end{proof}
The next lemma establishes an a priori estimate necessary to apply
Leray-Schauder's Fixed Point Theorem ~\cite[Theorem 10.1.1]{wu2006elliptic} to prove
Theorem \ref{thm:HJB_well_posed}.
\begin{lemma}
\label{lem:LS-FP-Helper}
Suppose Assumptions \ref{ass:SDE_well_posed} and \ref{ass:ham_structure} hold. 
There exists a $C\geq 0$, 
if $v\in W^{2,1}_q(\cO_T)$ and $\eta\in[0,1]$ satisfy
\begin{equation}
    \begin{aligned}
        \tfrac{\partial v}{\partial t}+\tfrac{1}{2}\text{Tr}(\sigma\sigma^\top \nabla^2v)+\eta\mathcal{H}(\cdot,\nabla v) &= 0, \text{ a.e. in $\cO_T$}\\
    v&=\eta g \text{ on } \partial\cO_T
    \end{aligned}
,\end{equation}
then $\|v\|_{W^{2,1}_q(\cO_T)}\leq C(1+\|g\|_{C^2(\cO)})$. 
\end{lemma}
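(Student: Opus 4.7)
The plan is to reduce the quasilinear equation to a linear parabolic equation with bounded measurable coefficients, and then combine an $L^\infty$ bound obtained from the maximum principle (Corollary \ref{cor:max_princ_ext}) with the parabolic $W^{2,1}_q$ estimate of Lemma \ref{lem:parabolic_PDE}. Throughout, I would track carefully that the constants produced do not depend on either the particular $v$ or the parameter $\eta\in[0,1]$.

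The first step is a linearisation of the Hamiltonian. Since $\mathcal{H}(t,x,z)=\inf_{a\in A}\bigl[b(t,x,a)\cdot z+f(t,x,a)+\tau\rho^a(t,x)\bigr]$ is an infimum of functions which are affine in $z$ with slopes $b(t,x,a)$ of norm at most $K$ by Assumption \ref{ass:SDE_well_posed}, the map $z\mapsto \mathcal{H}(t,x,z)$ is $K$-Lipschitz. Combined with $|\mathcal{H}(t,x,0)|\leq K$ from Assumption \ref{ass:ham_structure}, this lets me associate to any fixed $v\in W^{2,1}_q(\cO_T)$ a measurable $\tilde b\colon \cO_T\to\mathbb{R}^d$ with $\|\tilde b\|_{L^\infty(\cO_T)}\leq K$ such that
\[
\eta\mathcal{H}(t,x,\nabla v(t,x))=\eta\mathcal{H}(t,x,0)+\eta\tilde b(t,x)\cdot\nabla v(t,x)\quad\text{a.e.},
\]
(concretely, $\tilde b$ is given by a pointwise incremental ratio). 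The equation then becomes the linear parabolic PDE
\[
\partial_t v+\tfrac12\mathrm{Tr}(\sigma\sigma^\top\nabla^2 v)+\eta\tilde b\cdot\nabla v=-\eta\mathcal{H}(\cdot,0)\ \text{a.e.~in}\ \cO_T,\qquad v=\eta g\ \text{on}\ \partial\cO_T,
\]
with drift and source both bounded by $K$ uniformly in $\eta$ and $v$.

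Next I would extract the $L^\infty$ bound. Let $\tilde g(t,x):=g(x)$ be the time-independent extension of $g$ and set $w:=v-\eta\tilde g$. Then $w$ vanishes on $\partial\cO_T$ and satisfies a linear parabolic equation of the type in Corollary \ref{cor:max_princ_ext}, whose right-hand side is bounded in absolute value by $K+\tfrac12\|\sigma\sigma^\top\|_\infty\|\nabla^2 g\|_\infty+K\|\nabla g\|_\infty\leq C(1+\|g\|_{C^2(\cO)})$. The corollary, applied in both directions, yields $\|w\|_{L^\infty(\cO_T)}\leq CT(1+\|g\|_{C^2(\cO)})$, hence $\|v\|_{L^\infty(\cO_T)}\leq C(1+\|g\|_{C^2(\cO)})$.

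Finally, I would feed the linearised PDE into Lemma \ref{lem:parabolic_PDE} with $a_{ij}=\tfrac12(\sigma\sigma^\top)_{ij}$ (uniformly elliptic with constant $\kappa$ and continuous on $\overline{\cO}$ with modulus determined by $\sigma$), $b_i=\eta\tilde b_i$ and $c=0$, giving
\[
\|v\|_{W^{2,1}_q(\cO_T)}\leq C\bigl(\|\eta\mathcal{H}(\cdot,0)\|_{L^q(\cO_T)}+\|\eta g\|_{W^{2,1}_q(\cO_T)}+\|v\|_{L^q(\cO_T)}\bigr),
\]
and substituting $\|g\|_{W^{2,1}_q(\cO_T)}\leq C\|g\|_{C^2(\cO)}$ together with the $L^\infty$ bound of the previous step delivers the claim. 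The main point requiring care is the uniformity of the constant in Lemma \ref{lem:parabolic_PDE}: it depends on the ellipticity $\kappa$, the $L^\infty$ bounds on the coefficients, and the modulus of continuity of $a_{ij}$. All of these are controlled by the problem data independently of the particular solution $v$ and of $\eta\in[0,1]$, which is exactly what the Leray-Schauder scheme demands.
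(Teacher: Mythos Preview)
Your proof is correct and takes a somewhat more streamlined route than the paper's. The paper first subtracts the boundary data to form $w=v-\eta g$, then applies Lemma \ref{lem:parabolic_PDE} with the full nonlinear term $\eta\mathcal{H}(\cdot,\nabla(w+\eta g))$ treated as a source; this leaves a residual $\|\nabla w\|_{L_q}$ on the right-hand side, which is absorbed via the interpolation inequality $\|\nabla w\|_{L_q}\leq \varepsilon\|w\|_{W^{2,1}_q}+C_\varepsilon\|w\|_{L_q}$. For the $L^\infty$ bound the paper uses two separate drifts, coming from the optimizer $u^*$ and the reference control $u^{(0)}$, to obtain the two one-sided inequalities needed for Corollary \ref{cor:max_princ_ext}. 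Your linearisation---writing $\mathcal{H}(t,x,\nabla v)-\mathcal{H}(t,x,0)=\tilde b\cdot\nabla v$ with a single bounded measurable $\tilde b$---folds the gradient dependence into the drift coefficient of a genuinely linear equation, so Lemma \ref{lem:parabolic_PDE} applies with uniform constants and no interpolation is needed; the same $\tilde b$ also handles both directions of the maximum-principle argument at once. The paper's version makes the control-theoretic structure (the roles of $u^*$ and $u^{(0)}$) more explicit, while yours is shorter and uses only the $K$-Lipschitz dependence of $\mathcal{H}$ on $z$.
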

\begin{proof}
Let $w=v-\eta g$, then $w\in W^{2,1}_q(\cO_T)$ and equals zero on the parabolic boundary in the classical sense (since $q>d+2$).
Therefore
\begin{align*}
\tfrac{\partial (w+\eta g)}{\partial t}+\tfrac{1}{2}\text{Tr}(\sigma\sigma^\top \nabla ^2(w+\eta g))+\eta\mathcal{H}(\cdot,\nabla (w+\eta g)) = 0,  
\end{align*}
a.e in $O_T$.
Since $g$ depends only on $x$ the above is equivalent to 
\begin{align}
\label{eqn:PDE_aprior_bound}
\tfrac{\partial w}{\partial t}+\tfrac{1}{2}\text{Tr}(\sigma\sigma^\top \nabla^2w)
=
-\tfrac{\eta}{2}\text{Tr}(\sigma\sigma^\top \nabla ^2g)
-\eta\mathcal{H}(\cdot,\nabla (w+\eta g)),
\end{align}
a.e in $O_T$.
From Assumption \ref{ass:SDE_well_posed}  
$\text{Tr}(\sigma\sigma^\top \nabla^2g)\in L_q(\cO_T)$ and from Assumption \ref{ass:ham_structure} the map $(t,x)\mapsto\mathcal{H}(t,x,(\nabla w+\eta \nabla g)(t,x))$ is measurable.
The structural condition in \ref{ass:ham_structure} implies that the right hand side of \eqref{eqn:PDE_aprior_bound} is in $L_q(\cO_T)$.
From Lemma \ref{lem:parabolic_PDE} there exists a constant $C>0$ such that for any $v$
\begin{equation}
\label{eqn:first_bound}
\begin{split}
\|w\|_{W^{2,1}_q(\cO_T)}&\leq 
C(\|
-\tfrac{\eta}{2}\text{Tr}(\sigma\sigma^\top \nabla^2g) - \eta \mathcal{H}(\cdot,\nabla(w+\eta g))\|_{L_q(\cO_T)}+\|w\|_{L_q(\cO_T)})\\
&\leq 
C(\left\|\nabla^2g\|_{L_q(\cO)} + \eta\| \mathcal{H}(\cdot,\nabla(w+\eta g))\right\|_{L_q(\cO_T)}+\|w\|_{L_q(\cO_T)})\\
&\leq C(1+\|g\|_{C^2(\cO)}+\|\nabla w\|_{L_q(\cO_T)}+\|w\|_{L_q(\cO_T)})
\end{split}
\end{equation}
In order to bound $\|\nabla w\|_{L_q(\cO_T)}$ we will use an interpolation inequality. 
Let $\varepsilon>0$.
There exists a $C_\varepsilon>0$ such that for any $w\in W^{2,1}_q(\cO_T)$
\begin{align*}
\|\nabla w\|_{L_q(\cO_T)}&\leq\left(\int_0^T\|w(t,\cdot)\|^q_{W^{1}_q(\cO)}dt\right)^{\tfrac{1}{q}}\\
&\leq \left(\int_0^T(\varepsilon\|w(t,\cdot)\|_{W^2_q(\cO)}+C_\varepsilon\|w(t,\cdot)\|_{L_q(\cO)})^q\right)^{\tfrac{1}{q}}\\
&\leq C_q \left(
\int_0^T \varepsilon^q\|w(t,\cdot)\|^q _{W^2_q(\cO)}dt\right)^{\tfrac{1}{q}}
+
\left(
C^q_\varepsilon\int_0^T\|w(t,\cdot)\|^q_{L_q(\cO)} dt\right)^{\tfrac{1}{q}}\\
&\leq 
\varepsilon C_q\|w\|_{W^{2,1}_q(\cO_T)}
+C_{q,\varepsilon}\|w\|_{L_q(\cO_T)},
\end{align*}
where the second inequality follows from \cite[Theorem 7.28]{gilbarg1977elliptic}.
Therefore $\|w\|_{W^{2,1}_q(\cO_T)}\leq C(1+\|g\|_{C^2(\cO_T)}+\|w\|_{L_q(\cO_T)})$ where the constant is independent of $w$.
In order to bound the term $\|w\|_{L_q(\cO_T)}$ we will use Corollary \ref{cor:max_princ_ext}. 
From Assumption \ref{ass:ham_structure} for each $w$ there exists  $u^*$ such that 
\begin{align*}
\mathcal{H}(\cdot,\nabla(w+\eta g))&=b^{u^*}\cdot(\nabla w+\eta\nabla g)+f^{u^*}+\tau \rho^{u^*},
\end{align*}
and using the non-negativity of the Bregman divergence and the definition of the Hamiltonian we 
have the following upper and lower bounds
\begin{align}
\mathcal{H}(\cdot,\nabla(w+\eta g))&\geq b^{u^*}\cdot\nabla(w+\eta g)+f^{u^*},\label{eqn:lower}\\
\mathcal{H}(\cdot,\nabla(w+\eta g))&\leq b^{u^{(0)}}\cdot\nabla(w+\eta g)+f^{u^{(0)}}\label{eqn:upper}.
\end{align}
We will first bound $w$ from above. 
Substituting \eqref{eqn:upper} into \eqref{eqn:PDE_aprior_bound} implies
\begin{equation}
\begin{aligned}
\tfrac{\partial w}{\partial t}+\tfrac{1}{2}\text{Tr}(\sigma\sigma^\top \nabla^2w)+\eta b^{u^{(0)}}\cdot\nabla w &\geq - F^{u^{(0)}} \text{ a.e in $\cO_T$}\\
w&=0  \text{ on }\partial\cO_T
\end{aligned}.
\end{equation}
where $F^a(t,x)=\tfrac{\eta}{2}\text{Tr}(\sigma\sigma^\top \nabla^2g)(t,x)+ \eta f(t,x,a)
+ \eta^2 \nabla g(x)\cdot b(t,x,a)$.
From Assumption \ref{ass:SDE_well_posed}
there exists a constant $C>0$ such that for any $a\in A$ 
\begin{align}
\label{eqn:helphelp}
    \|F^{a}\|_{B_b(\cO_T;\mathbb{R})}\leq \tfrac{1}{2}\|\nabla^2g\|_{C^0(\cO)}+K(1+\|\nabla g\|_{C^0(\cO)})=:C
\end{align}
Therefore 
\begin{equation}
\begin{aligned}
\tfrac{\partial w}{\partial t}+\tfrac{1}{2}\text{Tr}(\sigma\sigma^\top \nabla^2w)+\eta b^{u^{(0)}}\cdot\nabla w(t,x) &\geq - C  \text{ a.e. in }\cO_T \\
w&=0  \text{ on }\partial\cO_T
\end{aligned}.
\end{equation}
From Corollary \ref{cor:max_princ_ext},  $w(t,x)\leq CT$ a.e. in $\cO_T$.
For the lower bound substituting \eqref{eqn:lower} into \eqref{eqn:PDE_aprior_bound} implies 
\begin{align*}
&\tfrac{\partial w}{\partial t}+\tfrac{1}{2}\text{Tr}(\sigma\sigma^\top \nabla^2w)+\eta b^{u^*}\cdot\nabla w\leq
-\tfrac{\eta}{2}\text{Tr}(\sigma\sigma^\top \nabla ^2g)-\eta f^{u^*}-\eta^2 b^{u^*}\cdot \nabla g=-F^{u^*}.
\end{align*}
Therefore 
\begin{equation}
\begin{aligned}
\tfrac{\partial w}{\partial t}+\tfrac{1}{2}\text{Tr}(\sigma\sigma^\top \nabla^2w)+\eta b^{u^*}\cdot\nabla w&\leq C  \text{ a.e. in }\cO_T\\
w&=0  \text{ on }\partial\cO_T
\end{aligned},
\end{equation}
where the constant $C$ is as in \eqref{eqn:helphelp}.
Corollary \ref{cor:max_princ_ext} concludes the proof.
\end{proof}

\begin{proof}
[Proof of Theorem \ref{thm:HJB_well_posed}]
Define the mapping $T:C^{0,1}(\cO_T)\rightarrow C^{0,1}(\cO_T)$ by $v=Tu$, where $v$ is the solution to 
\begin{align}
\label{eqn:fixed_point_pde}
\tfrac{\partial v}{\partial t}+\tfrac{1}{2}\text{Tr}(\sigma\sigma^\top \nabla^2 v)+\mathcal{H}(\cdot,\nabla u) = 0 \text{ a.e. in }\cO_T
\end{align}
and $v=g$  on $\partial\cO_T$.
The map $T$ is well-defined.
To see this let $u\in C^{0,1}(\cO_T)$.
Then from Assumption \ref{ass:ham_structure} $(t,x)\mapsto\mathcal{H}(t,x,\nabla u(t,x))$ is measurable and 
$|\mathcal{H}(t,x,\nabla u(t,x))|\leq K(1+\|u\|_{C^{0,1}(\cO_T)})$.
Therefore $\mathcal{H}(\cdot,\nabla  u(\cdot))\in L_q(\cO_T)$. 
Lemma \ref{lem:parabolic_PDE} and \eqref{eqn:embedding_Holder_Sob} implies $v\in W^{2,1}_q(\cO_T)\subset C^{0,1}(\cO_T)$.

To show the map $T$ is continuous let
$\{u_n\}_{n\in\mathbb{N}}$ converge to $u$ in $C^{0,1}(\cO_T)$, $v_n=Tu_n$ and $v=Tu$. 
Then $v-v_n\in W^{2,1}_q(\cO_T)$ satisfies
\begin{align*}
\tfrac{\partial (v-v_n)}{\partial t}(t,x)+\tfrac{1}{2}\text{Tr}(\sigma\sigma^\top \nabla^2(v-v_n))+\mathcal{H}(t,x,\nabla u(t,x))-\mathcal{H}(t,x, \nabla u_n(t,x)) = 0,
\end{align*}
for a.e $(t,x)\in\cO_T$,
and $v-v_n=0$ on $\partial\cO_T$.
From 
\cite[Appendix E, p. 207]{fleming2012deterministic}  and \eqref{eqn:embedding_Holder_Sob} there exists a constant $C>0$ independent of $n$ such that 
\begin{equation}
\label{eqn:temp_1}
\|v-v_n\|_{C^{0,1}(\cO_T)}
\leq\|v-v_n\|_{W^{2,1}_q(\cO_T)}\leq C\|\mathcal{H}(\cdot,\nabla u(\cdot))-\mathcal{H}(\cdot,\nabla u_n(\cdot))\|_{L_q(\cO_T)}.
\end{equation}
In order to bound \eqref{eqn:temp_1} note that due to Assumption \ref{ass:ham_structure} for any $(t,x)\in\cO_T$ and $z\in\mathbb{R}^d$ there exists $a^*_z\in A$ such that 
$\mathcal{H}(t,x,z)=H(t,x,z,a^*_z)$. Similarly for any $(t,x)\in\cO_T$ and $z'\in\mathbb{R}^d$ $\mathcal{H}(t,x,z')\leq H(t,x,z',a)$ for all $a$. Taking $a=a^*_z$ yields the following
\begin{align}
\label{eqn:lip_ham}
\mathcal{H}(t,x,z)-\mathcal{H}(t,x,z')\geq H(t,x,z,a^*_z)- H(t,x,z',a^*_z)\geq -K|z-z'|.
\end{align}
An upper bound is attained by interchanging $z$ and $z'$.
Combining both inequalities it follows that
there exists a constant $C>0$ such that for any $(t,x)\in\cO_T$ and $z,z'\in\mathbb{R}^d$,
$|\mathcal{H}(t,x,z)-\mathcal{H}(t,x,z)|\leq C|z-z'|$.
Returning to \eqref{eqn:temp_1} there exists a constant $C>0$ independent of $n\in\mathbb{N}$ such that 
\begin{align*}
\|v-v_n\|_{C^{0,1}(\cO_T)}
\leq C\| \nabla u-\nabla u_n\|_{L_q(\cO_T)}
\leq C\|\nabla (u- u_n)\|_{C^{0}(\cO_T)}
\leq C\|u-u_n\|_{C^{0,1}(\cO_T)}.
\end{align*}
This concludes the proof of the continuity of the map $T$.

It remains to prove $T$ is a compact, that is we want to show $T$ maps any bounded set in
$C^{0,1}(\cO_T)$ to a precompact set.
Assume $\{u_n\}_{n\in\mathbb{N}}\subset C^{0,1}(\cO_T)$ is bounded and let $v_n=Tu_n$. 
Recall that
for $0<\mu\leq 1$
$C^{1+\mu}(\cO_T)$ is precompact in $C^{0,1}(\cO_T)$.
Let $\mu=1-\frac{d+2}{q}$.
Then there exists a constant $C>0$ such that for any $n\in\mathbb{N}$
\begin{align*}
\|v_n\|_{C^{1+\mu}(\cO_T)}&\leq\|v_n\|_{W^{2,1}_q(\cO_T)}\leq C(\|\mathcal{H}(\cdot,\nabla u_n)\|_{L_q(\cO_T)}+\|v_n\|_{L_q(\cO_T)})\\
&\leq C(1+\|u_n\|_{C^{0,1}(\cO_T)})\leq C,
\end{align*}
where the first inequality follows from \eqref{eqn:embedding_Holder_Sob}, the second from Lemma \ref{lem:parabolic_PDE}, the third from Corollary \ref{cor:max_princ_ext} and the fact $v^n$ satisfies \eqref{eqn:fixed_point_pde} (the details for this are omitted as the argument is similar to that in Lemma \ref{lem:LS-FP-Helper}).
Therefore $T$ maps bounded sets in $C^{0,1}(\cO_T)$ to bounded sets in $C^{1+\mu}(\cO_T)$ which is compactly embedded in $C^{0,1}(\cO_T)$.

To conclude the proof let $\eta\in[0,1]$ and $v\in C^{0,1}(\cO_T)$ be such that $v=\eta Tv$, i.e.
\begin{align}
\label{eqn:PDE_HJB_no_val_funct}
\tfrac{\partial v}{\partial t}(t,x)+\tfrac{1}{2}\text{Tr}(\sigma\sigma^\top \nabla^2v)(t,x)+\eta\mathcal{H}(t,x,\nabla v(t,x))=0,\text{ a.e. }(t,x)\in\cO_T,
\end{align}
and $v(t,x)=\eta g(x)$. 
From Lemma \ref{lem:LS-FP-Helper} there exists a constant $C>0$, independent of $\eta$ and $v$ such that $\|v\|_{W^{2,1}_q(\cO_T)}\leq C$.
It follows from 
Leray-Schauder Theorem \cite[Theorem 11.3]{gilbarg1977elliptic} there exists a $v\in C^{0,1}(\cO_T)$ such that $v=Tv$, and from \eqref{eqn:embedding_Holder_Sob} this solution is in $W^{2,1}_q(\cO_T)$.

To prove uniqueness let $v_1,v_2\in W^{2,1}_q(\cO_T)$ satisfy \eqref{eqn:hjb}. 
let $w=v_1-v_2\in W^{2,1}_q(\cO_T)$
satisfies 
\begin{align}
\label{eqn:uniqueness_1}
\tfrac{\partial w}{\partial t}(t,x)+\tfrac{1}{2}\text{Tr}(\sigma\sigma^\top \nabla^2w)(t,x)+\mathcal{H}(t,x,\nabla v_1(t,x))-\mathcal{H}(t,x,\nabla v_2(t,x))=0 ,
\end{align}
a.e. $(t,x)\in\cO_T$ and $w=0$ on $\partial\cO_T$. 
Let $a^*:\cO_T\rightarrow A$ be the map for which 
\begin{align*}
\mathcal{H}(t,x,\nabla v_2(t,x))=H(t,x,\nabla v_2(t,x),a^*(t,x)).
\end{align*}
Then 
\begin{align*}
\mathcal{H}&(t,x,\nabla v_1(t,x))- \mathcal{H}(t,x,\nabla v_2(t,x)) =\mathcal{H}(t,x,\nabla v_1(t,x))-H(t,x,\nabla v_2(t,x),a^*(t,x))\\
&\leq H(t,x,\nabla v_1(t,x),a^*(t,x))-H(t,x,\nabla v_2(t,x),a^*(t,x))= b(t,x,a^*(t,x))\cdot \nabla w(t,x)
\end{align*}
Substituting this into \eqref{eqn:uniqueness_1} implies 
\begin{align*}
0&=\tfrac{\partial w}{\partial t}(t,x)+\tfrac{1}{2}\text{Tr}(\sigma\sigma^\top \nabla^2w)(t,x)+\mathcal{H}(t,x,\nabla v_1(t,x))-\mathcal{H}(t,x,\nabla v_2(t,x))\\
&\leq \tfrac{\partial w}{\partial t}(t,x)+\tfrac{1}{2}\text{Tr}(\sigma\sigma^\top \nabla^2w)(t,x)+b(t,x,a^*(t,x))\cdot \nabla w(t,x)
\end{align*}
a.e. $(t,x)\in\cO_T$ and $w=0$ on $\partial\cO_T$.
Applying the Maximum Principle \cite[Appendix E]{fleming2012deterministic} gives $w(t,x)\leq 0$ a.e.
Interchanging the roles of $v_1$ and $v_2$ gives the reverse inequality.
Therefore there exists one and only solution.
It remains to identify the unique solution in $W^{2,1}_q(\cO_T)$ of \eqref{eqn:hjb} with the optimal value function and that $u^*(t,x)=a^*(t,x,\nabla V^*(t,x))$ is an optimal control. 
This follows from the usual verification argument which utilizes the
generalized Ito's formula \cite[Ch.~2., Sec.~10, Theorem 1]{krylov2008controlled}.

Finally to show this control is indeed optimal from the definition of the Hamiltonian
\begin{align*}
    \mathcal{H}(t,x,\nabla V^*(t,x))\leq H(t,x,\nabla V^*(t,x),u^{(0)}(t,x)).
\end{align*}
Therefore from the non-negativity of the Bregman divergence 
\begin{align*}
   0&\leq\tau\rho^{u^*}(t,x)\\
   &\leq H(t,x,\nabla V^*(t,x),u^{(0)}(t,x))- b(t,x,u^*(t,x))\cdot \nabla V^*(t,x)-f(t,x,u^*(t,x))\\
   &\leq b(t,x,u^{(0)}(t,x))\cdot \nabla V^*(t,x)+f(t,x,u^{(0)}(t,x))+K(1+\|\nabla V^*\|_{C^0(\cO_T)})\\
   &\leq C(1+\|\nabla V^*\|_{C^0(\cO_T)}),
\end{align*}
which concludes the proof.
\end{proof}

\section{Proofs for the Examples}
\label{sec:ap_ex1}
\subsection{Validation of assumptions for Example \ref{sec:ex_LQR}}
\label{sec:ap_ex2}
Let $\psi^*:\mathbb{R}^2\rightarrow\mathbb{R}$ 
denote the Legendre conjugate of $\psi$. 
We will show 
\begin{align}
\label{eqn:leg_conj_lqr}
\psi^*(y)=-1+\sqrt{1+R^2|y|^2}+\log\tfrac{-1+\sqrt{1+R^2|y|^2}}{|y|^2}+\log 2.
\end{align}
To this end recall that 
$\psi^*(y)=\sup_{a\in B_R(0)}a\cdot y + \log(R^2-|a|^2)$. 
Since $-\log(R^2-|a|^2)$ converges to $+\infty$ near the boundary the first order optimality condition asserts that $y = \tfrac{2a^*(y)}{R^2-|a^*(y)|^2}$.
This suggests that for each $y$, $a^*(y)=\alpha\tfrac{ y}{|y|}$ for some $\alpha\in(0,R)$.
To find $\alpha$ we solve $|y|\alpha^2+2\alpha-|y|R^2=0$ which implies $\alpha=|y|^{-1}(-1+\sqrt{1+|y|^2R^2})$.
A direct computation shows that $\nabla\psi^*:\mathbb{R}^2\rightarrow A$ is given by \eqref{eqn:ex_leg_grad} and 
$[\nabla^2\psi^*(y)]_{ij}=\alpha(|y|)\delta_{ij}-\beta(|y|)y_iy_j$
where $\alpha(|y|)=R^2[1+\sqrt{1+R^2|y|^2}]^{-1}$ and $\beta(|y|)=R^4(\sqrt{1+R^2|y|^2})^{-1}(1+\sqrt{1+R^2|y|^2})^{-2}$.
The details for these calculations are omitted. 

To show Assumption \ref{ass:mirror_reg} holds
we need to show the operator norm of the Hessian is bounded. To that end 
let $y,y'\in\mathbb{R}^2$ then
\begin{align*}
|\nabla^2\psi^*(y)[y']|\leq|\alpha(|y|)y'|+|\beta(|y|)(y^\top y)[y']|\leq |\alpha(|y|)||y'|+\beta(|y|)|y|^2|y'|\leq C_R|y'|.
\end{align*}
Next note that since $\psi(\nabla\psi^*(y))=\log\tfrac{2R^2}{(1+\sqrt{1+R^2|y|^2})}$ and $\tfrac{R^2}{\sqrt{1+R^2|y|^2}}\leq\tfrac{2R^2}{(1+\sqrt{1+R^2|y|^2})}\leq 2R^2$
it follows that there a constant $C>0$ such that for any $y,y'\in\mathbb{R}^2$ we have
$|\psi(\nabla\psi^*(y))|\leq C\max\{1, |\log(\sqrt{1+R^2|y|^2})|\}\leq C(1+|y|)$.
Moreover $\nabla\psi(\nabla\psi^*(y))=y$ meaning $\mathcal{C}(y)=y$ for all $y\in\mathbb{R}^2$.
Next let $y,y'\in\mathbb{R}^2$ 
and define 
$v_\varepsilon(y,y')=\varepsilon\nabla\psi^*(y)+(1-\varepsilon)\nabla \psi^*(y')$.
Note that
\begin{align*}
    |\nabla\psi(a)|=\tfrac{2|a|}{R^2-|a|^2}, \text{ and }|v_\varepsilon(y,y')|\leq \tfrac{R^2\max\{|y|,|y|'\}}{1+\sqrt{1+R^2\max\{|y|^2,|y'|^2\}}}.
\end{align*}
Since the function $|a|\mapsto\tfrac{2|a|}{R^2-|a|^2}$ is (strictly) increasing in $|a|$ 
and assuming WLOG $|y|\geq |y'|$ 
we have that 
\begin{align*}
    |\nabla\psi(v_\varepsilon(y,y'))|&=\tfrac{2|v_\varepsilon(y,y')|}{R^2-|v_\varepsilon(y,y')|^2}\leq \tfrac{2\tfrac{R^2|y|}{1+\sqrt{1+R^2|y|^2}}}{R^2-\left(\tfrac{R^2|y|}{1+\sqrt{1+R^2|y|^2}}\right)^2}=\tfrac{2|y|}{1+\sqrt{1+R^2|y|^2}}\tfrac{1}{1-\tfrac{R^2|y|^2}{(1+\sqrt{1+R^2|y|^2})^2}}\\
    &=2|y|\tfrac{1+\sqrt{1+R^2|y|^2}}{2(1+\sqrt{1+R^2|y|^2})}=|y|.
\end{align*}

This is sufficient to show Assumption \ref{ass:mirror_reg} holds.
For $\tau>0$ we show the Hamiltonian and optimal control satisfy the conditions outlined in Assumption \ref{ass:ham_structure}. 
From the definition of the Bregman divergence given in \eqref{eqn:Bregman_1} we have 
\begin{align*}
\mathcal{H}(t,x,z)
&=x^\top M^\top_1 z +\tfrac{|x|^2}{2}+\inf_{a\in B_R(0)} \bigg[z^\top N^\top a + \tfrac{|a|^2}{2}+\tau\log\tfrac{R^2}{R^2-|a|^2}\bigg].
\end{align*}
Let $a^*(t,x,z)$ be the optimal $a\in B_R(0)$ for a given $(t,x,z)$. 
The first order optimality condition reads as follows: $\left(1+\tfrac{2\tau}{R^2-|a^*|^2}\right)a^*=-z^\top N^\top$. 
Taking absolute values implies
\begin{align}
\label{eqn:FoC_LQR}
|a^*|\left(1+\tfrac{2\tau}{R^2-|a^*|^2}\right)=|N^\top z|. 
\end{align}
In particular we see that the magnitude of $a^*(t,x,z)$ depends on the magnitude of $N^\top z$ in the following way.
When $|N^\top z|$ is large, the only way \eqref{eqn:FoC_LQR} can hold is if $|a^*(t,x,z)|$ is close to $R$.
Similarly if $|N^\top z|=0$ then $a^*(t,x,z)=0$ and if $|N^\top z|$ is small $|a^*(t,x,z)|$ must be close to $0$.
Observe that from the optimality condition we have that for each $(t,x,z)$, $a^*(t,x,z)=-(R-\varepsilon(z))\tfrac{N^\top z}{|N^\top z|}$.
Substituting this into \eqref{eqn:FoC_LQR} and taking absolute values implies 
\begin{align}
\label{eqn:help_bound}
|N^\top z|=(R-\varepsilon(z))\bigg(1+\tfrac{2\tau}{R^2-(R-\varepsilon(z))^2}\bigg).
\end{align}
Therefore we see that for each $z$, $\varepsilon(z)$ is a root of the cubic equation $P(\varepsilon)=0$ where 
\begin{align*}
P(\varepsilon)=\varepsilon^3+(|z^\top N|-3R)\varepsilon^2+(2R^2-2\tau - 2R|z^\top N|)\varepsilon+2R\tau.
\end{align*}
Note $P(0)=2R\tau$ and $P(R)= -|z^\top N|R^2$ meaning the polynomial admits a real root in the interval $(0,R)$.
The Hamiltonian is therefore measurable and is given by 
\begin{align*}
\mathcal{H}(t,x,z)=x^\top M^\top_1 z +\tfrac{|x|^2}{2}+\bigg[-(R-\varepsilon(z))|z^\top N| + \tfrac{(R-\varepsilon(z))^2}{2}+\tau\log\tfrac{R^2}{R^2-(R-\varepsilon(z))^2}\bigg],
\end{align*}
and the optimal control 
satisfies $u^*(t,x)=a^*(t,x,\nabla V^*(t,x))$.
It remains to show $\mathcal{H}$ grow linearly in $|z|$ which will be addressed in Lemmas \ref{lem:LQR_structural1}, \ref{lem:LQR_structural2} and \ref{lem:LQR_structural3} below.
To this end note that since $\varepsilon(z)\in(0,R)$ for all $z\in\mathbb{R}^d$
\begin{align*}
|\mathcal{H}(t,x,z)|\leq C(1+|z|)+\tau\left|\log \tfrac{R^2}{2R\varepsilon(z)-\varepsilon(z)^2}\right|.
\end{align*}
The following three lemmas are sufficient to show Assumption \ref{ass:ham_structure} holds.
\begin{lemma}
\label{lem:LQR_structural1}
There exists a $\tilde{z}\in\mathbb{R}^d$ such that whenever $z\in\mathbb{R}^d$ satisfies $|N^\top z|\geq |N^\top \tilde{z}|$   we have $\varepsilon(z)<1$.
\end{lemma}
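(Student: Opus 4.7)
The plan is to read \eqref{eqn:help_bound} as an implicit equation defining $\varepsilon(z)$ in terms of $|N^\top z|$ and to show that this implicit function is strictly monotone. Once monotonicity is in hand, the lemma reduces to picking $\tilde z$ so that $|N^\top \tilde z|$ lies above a convenient threshold. Concretely, using the identity $R^2 - (R-\varepsilon)^2 = \varepsilon(2R-\varepsilon)$, I would introduce
\[ \Phi(\varepsilon) := (R-\varepsilon)\left(1 + \frac{2\tau}{\varepsilon(2R-\varepsilon)}\right), \qquad \varepsilon \in (0,R), \]
so that \eqref{eqn:help_bound} reads $\Phi(\varepsilon(z)) = |N^\top z|$.

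Next I would establish that $\Phi$ is a strictly decreasing bijection from $(0,R)$ onto $(0,\infty)$, with $\Phi(\varepsilon)\to +\infty$ as $\varepsilon\to 0^+$ and $\Phi(\varepsilon)\to 0$ as $\varepsilon\to R^-$. The linear piece $R-\varepsilon$ is clearly strictly decreasing. For the other piece $h(\varepsilon):=\tfrac{R-\varepsilon}{\varepsilon(2R-\varepsilon)}$, a short differentiation shows the numerator of $h'$ equals $-(\varepsilon^2 - 2R\varepsilon + 2R^2)$, a quadratic with discriminant $-4R^2<0$ and positive leading coefficient, hence strictly positive; so $h'<0$. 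This monotonicity also sharpens the discussion preceding the lemma by showing that the root of the cubic $P$ in $(0,R)$ is in fact unique, so $\varepsilon(z)$ is unambiguously defined.

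To conclude: if $R\leq 1$ then $\varepsilon(z)\in(0,R)\subset(0,1)$ for every $z$ and any $\tilde z$ works. If $R>1$, I would choose $\tilde z \in \mathbb{R}^2$ with $|N^\top \tilde z|=\Phi(1/2)$; such $\tilde z$ exists because $N\neq 0$ (otherwise \eqref{eqn:help_bound} admits no root in $(0,R)$, contradicting the derivation of $a^*$), so the map $z\mapsto|N^\top z|$ is surjective onto $[0,\infty)$. For any $z$ with $|N^\top z|\geq|N^\top\tilde z|=\Phi(1/2)$, strict monotonicity of $\Phi$ gives $\varepsilon(z)\leq 1/2 <1$.

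The only nontrivial calculation is the sign of $h'$, which is routine. I do not anticipate a genuine obstacle: the lemma is essentially the qualitative comparative statics statement that as the driving coefficient $|N^\top z|$ in the first-order condition grows, the gap $R-|a^*(t,x,z)| = \varepsilon(z)$ from the boundary of the admissible ball shrinks.
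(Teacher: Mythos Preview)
Your proposal is correct and follows essentially the same route as the paper: both introduce the function $\Phi=\mathcal F$ from \eqref{eqn:help_bound}, establish that it is (strictly) decreasing on $(0,R)$, and then invert to conclude. The only cosmetic difference is that the paper obtains monotonicity without calculus by observing that $\mathcal F$ is a product of two positive decreasing factors, whereas you differentiate; your version is in fact a bit more careful about the existence of $\tilde z$ and the case $R\le 1$.
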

\begin{lemma}
\label{lem:LQR_structural2}
Let $\tilde{z}\in\mathbb{R}^d$ be the vector from Lemma \ref{lem:LQR_structural1}.
Whenever $z\in\mathbb{R}^d$ satisfies $|z^\top N|\geq C=:|\tilde{z}^\top N|$ then 
$|\mathcal{H}(t,x,z)|\leq C(1+|z|+\log|z^\top N|)$.
\end{lemma}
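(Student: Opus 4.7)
\textbf{The plan} is to continue from the pointwise bound on the Hamiltonian derived in the paragraph immediately preceding the statement, namely
\begin{align*}
|\mathcal{H}(t,x,z)|\leq C(1+|z|)+\tau\Big|\log\tfrac{R^2}{2R\varepsilon(z)-\varepsilon(z)^2}\Big|,
\end{align*}
and to show that, under the hypothesis $|z^\top N|\geq|\tilde z^\top N|$, the logarithmic term grows at most like $\log|z^\top N|$. Lemma~\ref{lem:LQR_structural1} supplies exactly what is needed: it yields $\varepsilon(z)<1$, so that both $R-\varepsilon(z)$ and $2R-\varepsilon(z)$ remain bounded below by positive constants depending only on $R$. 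Consequently
\begin{align*}
\log\tfrac{R^2}{2R\varepsilon(z)-\varepsilon(z)^2}=2\log R-\log\varepsilon(z)-\log\bigl(2R-\varepsilon(z)\bigr)\leq -\log\varepsilon(z)+C_R,
\end{align*}
and the task reduces to bounding $-\log\varepsilon(z)$ by $\log|z^\top N|$ plus a constant.

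The key step is thus to extract a quantitative lower bound for $\varepsilon(z)$ from the implicit relation~\eqref{eqn:help_bound},
\begin{align*}
|z^\top N|=(R-\varepsilon(z))\Big(1+\tfrac{2\tau}{\varepsilon(z)(2R-\varepsilon(z))}\Big).
\end{align*}
Using $\varepsilon(z)<1$ to replace $R-\varepsilon(z)$ and $2R-\varepsilon(z)$ by lower-bounding constants $c_R>0$, and dropping the additive $1$ inside the parenthesis (which is nonnegative), one obtains an estimate of the form $|z^\top N|\geq c_R\tau/\varepsilon(z)$, i.e. $\varepsilon(z)\geq c_R\tau/|z^\top N|$. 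Substituting this into the previous display yields
\begin{align*}
\log\tfrac{R^2}{2R\varepsilon(z)-\varepsilon(z)^2}\leq\log|z^\top N|+C_R',
\end{align*}
and combining with the initial bound delivers the claim $|\mathcal{H}(t,x,z)|\leq C(1+|z|+\log|z^\top N|)$, where $C$ absorbs the constants depending on $R$, $\tau$, and $N$ (but not on $(t,x)$).

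\textbf{The main obstacle} is really only the step of extracting the quantitative lower bound $\varepsilon(z)\gtrsim 1/|z^\top N|$ from the implicit cubic-type relation for $\varepsilon(z)$; once this is in hand the remaining work is elementary logarithm bookkeeping. A minor technical point is that the argument above implicitly uses that $R-\varepsilon(z)$ is bounded away from zero, which is automatic when $R>1$ (since $\varepsilon(z)<1$); if $R\leq 1$ one simply raises the threshold $|\tilde z^\top N|$ in Lemma~\ref{lem:LQR_structural1} to force $\varepsilon(z)\leq R/2$, which preserves its conclusion and makes the same estimates go through with constants depending only on $R$.
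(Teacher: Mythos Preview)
Your argument is correct but proceeds along a genuinely different route from the paper. You start from the pointwise bound
\[
|\mathcal{H}(t,x,z)|\leq C(1+|z|)+\tau\Big|\log\tfrac{R^2}{2R\varepsilon(z)-\varepsilon(z)^2}\Big|
\]
and extract a quantitative lower bound $\varepsilon(z)\geq c_R\tau/|z^\top N|$ directly from the first-order relation \eqref{eqn:help_bound}, which then controls $-\log\varepsilon(z)$ by $\log|z^\top N|$. The paper instead re-expresses $\mathcal{H}$ as the infimum over $\varepsilon\in(0,1)$ of the original one-dimensional objective (using that the minimiser $\varepsilon(z)$ lies in $(0,1)$), sandwiches the logarithmic term via $\varepsilon(2R-1)\leq 2R\varepsilon-\varepsilon^2\leq 2R\varepsilon$, and then computes the simplified infimum $\inf_{0<\varepsilon<1}[\varepsilon|z^\top N|-\tau\log\varepsilon]$ explicitly (attained at $\varepsilon=\tau/|z^\top N|$). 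Thus the paper never estimates $\varepsilon(z)$ itself; it bypasses the implicit relation by returning to the variational formulation. Your approach is more direct and arguably more transparent; the paper's is slightly slicker in that it replaces an implicit estimate by an explicit optimisation.

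Two small remarks on exposition. First, when you say you replace ``$R-\varepsilon(z)$ and $2R-\varepsilon(z)$ by lower-bounding constants'', note that $2R-\varepsilon(z)$ sits in the denominator, so what you actually use there is the \emph{upper} bound $2R-\varepsilon(z)\leq 2R$; the conclusion $|z^\top N|\geq c_R\tau/\varepsilon(z)$ is nonetheless correct. Second, the absolute value in the starting bound is harmless because $2R\varepsilon(z)-\varepsilon(z)^2=R^2-(R-\varepsilon(z))^2<R^2$, so the logarithm is already nonnegative.
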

\begin{lemma}
\label{lem:LQR_structural3}
Let $\tilde{z}\in\mathbb{R}^d$ be the vector from Lemma \ref{lem:LQR_structural1}.
Whenever $z\in\mathbb{R}^d$ satisfies $|z^\top N|\leq C=:|\tilde{z}^\top N|$ then $|\mathcal{H}(t,x,z)|\leq C(1+|z|)$.
\end{lemma}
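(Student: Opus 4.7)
The plan is to start from the inequality already derived immediately before the three sublemmas, namely
\[
|\mathcal{H}(t,x,z)| \leq C(1+|z|) + \tau\Bigl|\log\tfrac{R^2}{2R\varepsilon(z) - \varepsilon(z)^2}\Bigr|\,,
\]
and to show that the log term is bounded by a constant independent of $z$ on the set $\{|N^\top z|\leq |N^\top \tilde{z}|\}$. Since the Bregman penalty makes the optimised objective strictly convex in $a$ and blows up as $|a|\to R^-$, the minimiser $a^*(t,x,z)$ is unique and interior, so $\varepsilon(z) = R - |a^*(t,x,z)| \in (0,R)$ is a well-defined function of $z$ depending only on $|N^\top z|$.

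The key step I would carry out first is establishing the monotonic dependence of $\varepsilon(z)$ on $|N^\top z|$. Reparameterising via $u = R - \varepsilon$, the first-order condition \eqref{eqn:FoC_LQR} rewrites as
\[
|N^\top z| \;=\; u\Bigl(1 + \tfrac{2\tau}{R^2 - u^2}\Bigr) \;=:\; g(u)\,,
\]
where $g:(0,R)\to(0,\infty)$ is continuous, strictly increasing, with $g(0^+)=0$ and $g(R^-)=+\infty$. Uniqueness of the minimiser then forces $u$ (hence $\varepsilon(z)$) to equal the unique $g$-preimage of $|N^\top z|$, and so the map $|N^\top z|\mapsto \varepsilon(z)$ is strictly decreasing.

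Applied to the present setting, $|N^\top z|\leq |N^\top \tilde{z}|$ gives $\varepsilon(z)\geq \varepsilon(\tilde{z})$, while Lemma~\ref{lem:LQR_structural1} together with $\varepsilon(\tilde{z})\in(0,R)$ yields $\varepsilon(\tilde{z})\in(0,1)$. Since $\varepsilon\mapsto 2R\varepsilon - \varepsilon^2$ is strictly increasing on $(0,R)$, this produces
\[
2R\varepsilon(z) - \varepsilon(z)^2 \;\geq\; 2R\varepsilon(\tilde{z}) - \varepsilon(\tilde{z})^2 \;=:\; c_0 \;>\; 0\,,
\]
uniformly over the region of interest. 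Hence the log term is bounded by $|\log(R^2/c_0)|$, a universal constant depending only on $R$, $\tau$, $N$ and $|\tilde{z}|$. Substituting this into the initial inequality yields $|\mathcal{H}(t,x,z)|\leq C(1+|z|)$ with $C$ independent of $z$.

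I do not anticipate a serious obstacle here: the only delicate point is making the monotonicity of $\varepsilon(z)$ in $|N^\top z|$ precise, which is handled cleanly by the $u=R-\varepsilon$ reparameterisation together with strict convexity of the minimised objective. The degenerate case $N^\top z = 0$ is trivial, since then $a^*=0$, $\varepsilon=R$, and the log term vanishes outright, so the bound holds without any further work.
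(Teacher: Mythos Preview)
Your proposal is correct and follows essentially the same approach as the paper: both use the monotonicity of $\varepsilon(z)$ in $|N^\top z|$ (which you re-derive via the $u=R-\varepsilon$ reparametrisation, equivalent to the paper's argument that $\mathcal{F}$ is decreasing) to conclude $\varepsilon(z)\geq \varepsilon(\tilde z)=c>0$, and then observe that the logarithmic term is uniformly bounded on $\varepsilon\in[c,R]$. Your route is slightly more streamlined in that you work directly with the pre-derived inequality $|\mathcal{H}(t,x,z)|\leq C(1+|z|)+\tau|\log(R^2/(2R\varepsilon(z)-\varepsilon(z)^2))|$, whereas the paper revisits the two-sided sandwich bounds on $\mathcal{H}$ before drawing the same conclusion.
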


\begin{proof}
[Proof of Lemma \ref{lem:LQR_structural1}]
We first show if $|N^\top z_1|\geq |N^\top z_2|$ then $\varepsilon(z_1)\leq\varepsilon(z_2)$.

To this end define $\mathcal{F}:(0,R)\rightarrow[0,\infty)$ by $\mathcal{F}(\varepsilon)=(R-\varepsilon)\left(1+\tfrac{2\tau}{R^2-(R-\varepsilon)^2}\right)$
and note that $\mathcal{F}$ is continuous and decreasing in $\varepsilon$.
To see this observe that when $\varepsilon_1,\varepsilon_2\in(0,R)$ satisfy 
$\varepsilon_1\geq \varepsilon_2$
\begin{align*}
0\leq R-\varepsilon_1\leq R-\varepsilon_2\text{ and }0\leq 1+\tfrac{2\tau}{R^2-(R-\varepsilon_1)^2}\leq 1+\tfrac{2\tau}{R^2-(R-\varepsilon_2)^2}.
\end{align*}
Therefore $\mathcal{F}(\varepsilon_1)\leq\mathcal{F}(\varepsilon_2)$.
Since we are assuming $|N^\top z_1|\geq|N^\top z_2|$ we have from \eqref{eqn:help_bound} that 
\begin{align*}
\mathcal{F}(\varepsilon(z_1))=|z_1^\top N|
\geq|z_2^\top N|=\mathcal{F}(\varepsilon(z_2))\implies\varepsilon(z_1)\leq \varepsilon(z_2),
\end{align*}
where the implication follows from the fact $\mathcal{F}$ is decreasing.
This concludes the proof that whenever $z_1,z_2\in\mathbb{R}^d$ satisfy  
\begin{align}
\label{eqn:temp_blah}
   | N^\top z_1|\geq |N^\top z_2|\implies\varepsilon(z_1)\leq\varepsilon(z_2).
\end{align}
Let $\tilde{z}\in\mathbb{R}^d$ satisfy $\varepsilon(\tilde{z})=c<1$. 
Then setting $z_2=\tilde{z}$ in \eqref{eqn:temp_blah} concludes the proof.
\end{proof}
\begin{proof}
[Proof of Lemma \ref{lem:LQR_structural2}]
Since $|z^\top N|\geq|\tilde{z}^\top N|$ 
Lemma \ref{lem:LQR_structural1} 
assets that $\varepsilon(z)<1$.
Therefore the Hamiltonian can be written as follows
\begin{align*}
\mathcal{H}(t,x,z)=x^\top M^\top_1 z +\tfrac{|x|^2}{2}+\inf_{0<\varepsilon <1}\bigg[-(R-\varepsilon)|z^\top N| + \tfrac{(R-\varepsilon)^2}{2}+\tau\log\tfrac{R^2}{R^2-(R-\varepsilon)^2}\bigg],
\end{align*}
which can be bound for above and below by
\begin{equation}
\label{eqn:Ham_up_down_bound}
\begin{split}
&x^\top M^\top_1 z +\tfrac{|x|^2}{2}+\inf_{0<\varepsilon <1}\bigg[-(R-\varepsilon)|z^\top N|+\tau\log\tfrac{R^2}{R^2-(R-\varepsilon)^2}\bigg]\\
&\leq\mathcal{H}(t,x,z)\leq x^\top M^\top_1 z +\tfrac{|x|^2}{2}+\tfrac{R^2}{2}+\inf_{0<\varepsilon <1}\bigg[-(R-\varepsilon)|z^\top N|+\tau\log\tfrac{R^2}{R^2-(R-\varepsilon)^2}\bigg].
\end{split}
\end{equation}
Since $|z^\top N|\geq|\tilde{z}^\top N|$ we have that $\varepsilon(z)\leq\varepsilon(\tilde{z})<1$
and so 
$\varepsilon(2R-1)\leq 2R\varepsilon-\varepsilon^2\leq2R\varepsilon$.
Therefore $\log\tfrac{R^2}{2R\varepsilon}\leq\log\tfrac{R^2}{R^2-(R-\varepsilon)^2}=\log\tfrac{R^2}{2R\varepsilon-\varepsilon^2}\leq\log\tfrac{R^2}{\varepsilon(2R-1)}$.
Combining this with \eqref{eqn:Ham_up_down_bound} we have 
\begin{equation}
\begin{split}
&x^\top M^\top_1 z+\tfrac{|x|^2}{2}+\tau\log\tfrac{R^2}{2R}-R|z^\top N|+\inf_{0<\varepsilon <1}\bigg[\varepsilon|z^\top N|-\tau\log\varepsilon\bigg]\\
&\leq\mathcal{H}(t,x,z)\\
&\leq x^\top M^\top_1 z +\tfrac{|x|^2}{2}+\tfrac{R^2}{2}+\tau\log\tfrac{R^2}{2R-1}-R|z^\top N|+\inf_{0<\varepsilon <1}\bigg[\varepsilon|z^\top N|-\tau\log\varepsilon\bigg].
\end{split}
\end{equation}
The infimums are satisfied when $\varepsilon=\tfrac{\tau}{|z^\top N|}$ if $\tau<|z^\top N|$ 
and as $\epsilon\rightarrow 1^{-}$ otherwise 
meaning
\begin{equation}
\begin{split}
&x^\top M^\top_1 z +\tfrac{|x|^2}{2}+\tau\log\tfrac{R^2}{2R}-R|z^\top N|+\tau(1+\log|z^\top N|)\\
&\leq\mathcal{H}(t,x,z)\\
&\leq x^\top M^\top_1 z +\tfrac{|x|^2}{2}+\tfrac{R^2}{2}+\tau\log\tfrac{R^2}{2R-1}-R|z^\top N|+\tau(1+\log|z^\top N|).
\end{split}
\end{equation}
whenever $\tau<|z^\top N|$ and 
\begin{equation}
\begin{split}
&x^\top M^\top_1 z+\tfrac{|x|^2}{2}+\tau\log\tfrac{R^2}{2R}-R|z^\top N|+|z^\top N|\\
&\leq\mathcal{H}(t,x,z)\\
&\leq x^\top M^\top_1 z +\tfrac{|x|^2}{2}+\tfrac{R^2}{2}+\tau\log\tfrac{R^2}{2R-1}-R|z^\top N|+|z^\top N|,
\end{split}
\end{equation}
whenever $\tau\geq |z^\top N|$.
This concludes the proof.
\end{proof}
\begin{proof}
[Proof of Lemma \ref{lem:LQR_structural3}]
Since $|z^\top N|\leq|\tilde{z}^\top N|$  using the same argument as in Lemma \ref{lem:LQR_structural1} we have $c=\varepsilon(\tilde{z})\leq\varepsilon(z)$ where $c$ is the constant from the proof of Lemma \ref{lem:LQR_structural1}.
Therefore it follow from \eqref{eqn:Ham_up_down_bound} that the
\begin{equation}
\begin{split}
&x^\top M^\top_1 z +\tfrac{|x|^2}{2}-R|z^\top N|+\inf_{c\leq\varepsilon \leq R}\bigg[\varepsilon|z^\top N|+\tau\log\tfrac{R^2}{R^2-(R-\varepsilon)^2}\bigg]\\
&\leq\mathcal{H}(t,x,z)\leq x^\top M^\top_1 z +\tfrac{|x|^2}{2}+\tfrac{R^2}{2}-R|z^\top N|+\inf_{c\leq\varepsilon \leq R}\bigg[\varepsilon|z^\top N|+\tau\log\tfrac{R^2}{R^2-(R-\varepsilon)^2}\bigg].
\end{split}
\end{equation}
Since the logarithmic term is bounded on the interval $\varepsilon\in(c,R)$ we have 
\begin{equation}
\begin{split}
x^\top M^\top_1 z &+\tfrac{|x|^2}{2}-R|z^\top N|+\tau C+|z^\top N|\\
&\leq\mathcal{H}(t,x,z)\leq x^\top M^\top_1 z +\tfrac{|x|^2}{2}+\tfrac{R^2}{2}-R|z^\top N|+\tau C+R|z^\top N|.
\end{split}
\end{equation}
This concludes the proof.
\end{proof}
It remains to show the convexity condition in Assumption \ref{ass:ham_convex} holds. 
To this end 
\begin{align*}
H(t,x,z,a)-H(t,x,z,a')&=\left[\tfrac{a+a'}{2}+N^\top z\right]\cdot (a-a')+\tau\bigg(\log\tfrac{R^2}{R^2-|a|^2}-\log\tfrac{R^2}{R^2-|a'|^2}\bigg)\\
&=\left[\tfrac{a+a'}{2}+N^\top z\right]\cdot (a-a')+\tau\bigg(\log\tfrac{R^2-|a'|^2}{R^2-|a|^2}\bigg)\\
&=\tfrac{|a-a'|^2}{2}+\bigg[a'+N^\top z + \tfrac{2\tau a'}{R^2-|a'|^2}\bigg]\cdot (a-a')+\tau D_\psi(a|a')\\
&\geq \nabla_a H(t,x,z,a')\cdot(a-a')+\tau D_\psi(a|a').
\end{align*}

\subsection{Validation of assumptions for Example \ref{sec:example_finite_actions}}
\label{sec_app_2nd_ex}
We first show that the formulation presented in Example \ref{sec:example_finite_actions} is equivalent to considering an entropy regularized Markovian control problem with an action space of finite cardinality.
For a given  $\pi\in\mathcal{P}(\{1,\dots,p\}|\cO_T)$ let $X^{t,x,\pi}$ correspond to the solution of the following controlled SDE
\begin{equation}
\label{eqn:relaxed_SDE}
\begin{split}
dX_s &= \int_{A}\beta(s,X_s,a)\pi(da|s,X_s)dt + \sigma(s,X_s)dW_s,\enspace s\geq t \enspace X_t=x\\
&=\sum_{i=1}^p\beta(s,X_s,i)\pi(a=i| s,X_s)ds+\sigma(s,X_s)dW_s.
\end{split}
\end{equation}
The value function is given by 
\begin{equation}
\label{eqn:relaxed_VF}
\begin{split}
V^\pi(t,x)
& = \E\bigg[\int_t^{T_\cO}\sum_{i=1}^p\bigg(
\varphi(s,X_s,i)+\tau \log\tfrac{\pi(a=i|s,X_s)}{\mu(i)}\bigg)\pi(a=i|s,X_s)ds + g(X_{T_\cO})\bigg]
\end{split}
\end{equation}
Define $u:\cO_T\rightarrow A$ by $u_i(t,x)=\pi(a=i|t,x)$ and $\mu(i)=a^{(0)}_i$.
It is then clear displays \eqref{eqn:relaxed_SDE} and \eqref{eqn:relaxed_VF}
are identical to \eqref{eqn:eg_finite_actionion_SDE} and \eqref{eqn:eg_finite_action_VF} respectively.
A straightforward computation shows that  
\begin{align*}
u^*(t,x)\in\argmin_{a\in A}H(t,x,z,a)\iff u^*(t,x)=\left(\tfrac{a_i^{(0)}\exp\bigg(-\tfrac{\beta(t,x,i)\cdot z + \varphi_i(t,x)}{\tau}\bigg)}{\sum_{j=1}^pa_j^{(0)}\exp\bigg(-\tfrac{\beta(t,x,j)\cdot z + \varphi(t,x,j)}{\tau}\bigg)}\right)_{i=1}^p,
\end{align*}
and 
\begin{align*}
\mathcal{H}(t,x,z)=\inf_{a\in A}H(t,x,z,a)=-\tau\log\sum_{i=1}^pa_i^{(0)}\exp\bigg(-\tfrac{\beta(t,x,i)\cdot z + \varphi(t,x,i)}{\tau}\bigg).
\end{align*}
Therefore
\begin{align*}
|\mathcal{H}(t,x,z)|&\leq \tau\log\sum_{i=1}^p\exp\left(\tfrac{\sup_{1\leq i \leq p}|z|\|\beta(\cdot,i)\|_{B_b(\cO_T;\mathbb{R}^d)}+\sup_{1\leq i\leq p}\|\varphi(\cdot,i)\|_{B_b(\cO_T;\mathbb{R})}}{\tau}\right)\\
&\leq p\left(\sup_{1\leq i \leq p}\|b_i\|_{B_b(\cO_T;\mathbb{R}^d)}|z|+\sup_{1\leq i \leq p}\|\varphi(\cdot,i)\|_{B_b(\cO_T;\mathbb{R})}\right)\leq C(1+|z|),
\end{align*}
which concludes the proof of Assumption \ref{ass:ham_structure}.
Next we validate Assumption \ref{ass:mirror_reg}.  Let $y,y'\in \mathbb{R}^p$
\begin{align*}
|\nabla^2\psi^*(y)[y']|\leq|\text{diag}(\text{softmax}(y))[y']|+|\text{softmax}(y)\text{softmax}(y)^\top[y']|\leq C|y'|.
\end{align*}
Since $\psi(a)=\sum_{i=1}^pa_i\log a_i$, $\nabla\psi(a)=(1+\log a_i)_{i=1}^p$ and $\nabla\psi^*(y)=\tfrac{e^{y_i}}{\sum_{j=1}^p e^{y_j}}$, we have
\begin{align*}
|\psi(\nabla\psi^*(y))|&=\bigg|\sum_{i=1}^p\tfrac{e^{y_i}}{\sum_{j=1}^pe^{y_j}}(y_i-\log\sum_{j=1}^pe^{y_j})\bigg|\leq |y|+|\log\sum_{j=1}^pe^{y_j}\leq 2|y|,\\
\nabla\psi(\nabla\psi^*(y))&= (1+y_i-\log\sum_{j=1}^pe^{y_j})_{i=1}^p.
\end{align*}
so that $\mathcal{C}(y)=(1+\log\sum_{j=1}^pe^{y_j}))_{i=1}^p$.
Next let $y,y'\in \mathbb{R}^p$, $\varepsilon\in[0,1]$ and $v_\varepsilon(y,y')=\varepsilon\nabla\psi^*(y)+(1-\varepsilon)\nabla\psi^*(y')$ then 
\begin{align*}
\nabla\psi(v_\varepsilon(y,y'))= \nabla\psi(\varepsilon\nabla\psi^*(y)+(1-\varepsilon)\nabla\psi^*(y'))
=\bigg(1+\log\left(\tfrac{\varepsilon e^{y_i}}{\sum_{j=1}^pe^{y_j}}+\tfrac{(1-\varepsilon)e^{y'_i}}{\sum_{j=1}^pe^{y'_j}}\right)\bigg)_{i=1}^p.
\end{align*}
For each $1\leq i \leq p$ from Jensen's inequality
\begin{align*}
[\nabla\psi(v_\varepsilon(y,y'))]_i\geq 1+\varepsilon(y_i - \log \sum_{j=1}^p e^{y_j})+(1-\varepsilon)(y'_i - \log \sum_{j=1}^p e^{y'_j}).
\end{align*}
For the upper bound note that since the map $x\mapsto\log x$, satisfies $\log x \leq x + 1$ for all $x>0$,
\begin{align*}
[\nabla\psi(v_\varepsilon(y,y'))]_i\leq 2+\tfrac{\varepsilon e^{y_i}}{\sum_{j=1}^pe^{y_j}}+\tfrac{(1-\varepsilon)e^{y'_i}}{\sum_{j=1}^pe^{y'_j}}.
\end{align*}
Combining the above inequalities concludes the proof for Assumption \ref{ass:mirror_reg}.
It remains to prove that for any $y,y'\in\mathbb{R}^p$, $D_\psi(\nabla\psi^*(y),\nabla\psi^*(y'))=D_{\psi^*}(y',y)$.
Let $Z=\sum_{j=1}^p e^{y_j}$ and $Z'=\sum_{j=1}^p e^{y'_j}$.
Then 
\begin{align*}
&D_\psi(\nabla\psi^*(y),\nabla\psi^*(y'))\\
&=\psi(\nabla\psi^*(y))-\psi(\nabla\psi^*(y'))-\nabla\psi(\nabla\psi^*(y'))\cdot(\nabla\psi^*(y)-\nabla\psi^*(y'))\\
&=\sum_{i=1}^p \tfrac{e^{y_i}}{Z}(y_i-\log Z)
- \sum_{i=1}^p \tfrac{e^{y'_i}}{Z'}(y'_i-\log Z')-
\sum_{i=1}^p
\left[1+y'_i-\log Z'\right]
\cdot
\left[\tfrac{e^{y_i}}{Z}-\tfrac{e^{y'_i}}{Z'}\right]\\
&=\sum_{i=1}^p \tfrac{y_ie^{y_i}}{Z}-\log Z
+\log Z'-\sum_{i=1}^p\tfrac{y'_ie^{y_i}}{Z}=\sum_{i=1}^p\tfrac{e^{y_i}}{Z}(y_i-y'_i)+\log\tfrac{Z'}{Z}\\
&=\psi^*(y')-\psi^*(y)-\tfrac{e^{y_i}}{\sum_{j=1}^pe^{y_j}}(y'_i-y_i)=D_{\psi^*}(y',y).
\end{align*}
Finally we want to show the convexity condition in Assumption \ref{ass:ham_convex} holds.
To that end 
\begin{align*}
H(t,x,z,a)&-H(t,x,z,a')=H(t,x,z,a)-H(t,x,z,a')+\tau (\KL(a'|a^{(0)})-\KL(a'|a^{(0)}))\\
&=\nabla_aH(t,x,z,a')\cdot(a-a')+\tau (\KL(a'|a^{(0)})-\KL(a'|a^{(0)}))\\
&\geq \nabla_aH(t,x,z,a')\cdot(a-a')+\tau \KL(a|a'),
\end{align*}
where the first equality follows from the fact $H$ is linear in $a$ and the first inequality follows from standard properties of $\KL$ divergence.
\bibliographystyle{siam}
\bibliography{bibliography}

\begin{thebibliography}{10}

\bibitem{agarwal2021theory}
{\sc A.~Agarwal, S.~M. Kakade, J.~D. Lee, and G.~Mahajan}, {\em On the theory
  of policy gradient methods: Optimality, approximation, and distribution
  shift}, Journal of Machine Learning Research, 22 (2021), pp.~1--76.

\bibitem{pmlr-v97-ahmed19a}
{\sc Z.~Ahmed, N.~Le~Roux, M.~Norouzi, and D.~Schuurmans}, {\em Understanding
  the impact of entropy on policy optimization}, in Proceedings of the 36th
  International Conference on Machine Learning, K.~Chaudhuri and
  R.~Salakhutdinov, eds., vol.~97 of Proceedings of Machine Learning Research,
  PMLR, 09--15 Jun 2019, pp.~151--160.

\bibitem{bonnans2013perturbation}
{\sc J.~F. Bonnans and A.~Shapiro}, {\em Perturbation analysis of optimization
  problems}, Springer Science \& Business Media, 2013.

\bibitem{cayci2024convergence}
{\sc S.~Cayci, N.~He, and R.~Srikant}, {\em Convergence of entropy-regularized
  natural policy gradient with linear function approximation}, SIAM Journal on
  Optimization, 34 (2024), pp.~2729--2755.

\bibitem{cen2022fast}
{\sc S.~Cen, C.~Cheng, Y.~Chen, Y.~Wei, and Y.~Chi}, {\em Fast global
  convergence of natural policy gradient methods with entropy regularization},
  Operations Research, 70 (2022), pp.~2563--2578.

\bibitem{fleming2012deterministic}
{\sc W.~H. Fleming and R.~W. Rishel}, {\em Deterministic and stochastic optimal
  control}, vol.~1, Springer Science \& Business Media, 2012.

\bibitem{giegrich2024convergence}
{\sc M.~Giegrich, C.~Reisinger, and Y.~Zhang}, {\em Convergence of policy
  gradient methods for finite-horizon exploratory linear-quadratic control
  problems}, SIAM Journal on Control and Optimization, 62 (2024),
  pp.~1060--1092.

\bibitem{gilbarg1977elliptic}
{\sc D.~Gilbarg, N.~S. Trudinger, D.~Gilbarg, and N.~Trudinger}, {\em Elliptic
  partial differential equations of second order}, vol.~224, Springer, 1977.

\bibitem{han2016deep}
{\sc J.~Han et~al.}, {\em Deep learning approximation for stochastic control
  problems}, arXiv preprint arXiv:1611.07422,  (2016).

\bibitem{huang2025convergence}
{\sc Y.-J. Huang, Z.~Wang, and Z.~Zhou}, {\em Convergence of policy iteration
  for entropy-regularized stochastic control problems}, SIAM Journal on Control
  and Optimization, 63 (2025), pp.~752--777.

\bibitem{JMLR:v23:21-1387}
{\sc Y.~Jia and X.~Y. Zhou}, {\em Policy gradient and actor-critic learning in
  continuous time and space: Theory and algorithms}, Journal of Machine
  Learning Research, 23 (2022), pp.~1--50.

\bibitem{jia2023q}
{\sc Y.~Jia and X.~Y. Zhou}, {\em q-learning in continuous time}, Journal of
  Machine Learning Research, 24 (2023), pp.~1--61.

\bibitem{ju2022policy}
{\sc C.~Ju and G.~Lan}, {\em Policy optimization over general state and action
  spaces}, arXiv preprint arXiv:2211.16715,  (2022).

\bibitem{kerimkulov2023fisher}
{\sc B.~Kerimkulov, J.-M. Leahy, D.~\v{S}i\v{s}ka, L.~Szpruch, and Y.~Zhang},
  {\em A {F}isher--{R}ao gradient flow for entropy-regularised markov decision
  processes in {P}olish spaces}, arXiv preprint arXiv:2310.02951,  (2023).

\bibitem{kerimkulov2024mirror}
{\sc B.~Kerimkulov, D.~{\v{S}}i{\v{s}}ka, {\L}.~Szpruch, and Y.~Zhang}, {\em
  Mirror descent for stochastic control problems with measure-valued controls},
  arXiv preprint arXiv:2401.01198,  (2024).

\bibitem{krylov2008controlled}
{\sc N.~V. Krylov}, {\em Controlled diffusion processes}, vol.~14, Springer,
  2008.

\bibitem{lan2023policy}
{\sc G.~Lan}, {\em Policy mirror descent for reinforcement learning: Linear
  convergence, new sampling complexity, and generalized problem classes},
  Mathematical programming, 198 (2023), pp.~1059--1106.

\bibitem{pmlr-v162-leahy22a}
{\sc J.-M. Leahy, B.~Kerimkulov, D.~Siska, and L.~Szpruch}, {\em Convergence of
  policy gradient for entropy regularized {MDP}s with neural network
  approximation in the mean-field regime}, in Proceedings of the 39th
  International Conference on Machine Learning, K.~Chaudhuri, S.~Jegelka,
  L.~Song, C.~Szepesvari, G.~Niu, and S.~Sabato, eds., vol.~162 of Proceedings
  of Machine Learning Research, PMLR, 17--23 Jul 2022, pp.~12222--12252.

\bibitem{mei2020global}
{\sc J.~Mei, C.~Xiao, C.~Szepesvari, and D.~Schuurmans}, {\em On the global
  convergence rates of softmax policy gradient methods}, in International
  conference on machine learning, PMLR, 2020, pp.~6820--6829.

\bibitem{munos2006policy}
{\sc R.~Munos}, {\em Policy gradient in continuous time}, Journal of Machine
  Learning Research, 7 (2006), pp.~771--791.

\bibitem{nemirovskij1983problem}
{\sc A.~S. Nemirovskij and D.~B. Yudin}, {\em Problem complexity and method
  efficiency in optimization}, Wiley-Interscience, 1983.

\bibitem{pham2009continuous}
{\sc H.~Pham}, {\em Continuous-time stochastic control and optimization with
  financial applications}, vol.~61, Springer Science \& Business Media, 2009.

\bibitem{reisinger2023linear}
{\sc C.~Reisinger, W.~Stockinger, and Y.~Zhang}, {\em Linear convergence of a
  policy gradient method for some finite horizon continuous time control
  problems}, SIAM Journal on Control and Optimization, 61 (2023),
  pp.~3526--3558.

\bibitem{rockafellar1970convex}
{\sc R.~Rockafellar}, {\em Convex analysis}, Princeton Mathematical Series, 28
  (1970).

\bibitem{sethi2024modified}
{\sc D.~Sethi and D.~{\v{S}}i{\v{s}}ka}, {\em The modified {MSA}, a gradient
  flow and convergence}, The Annals of Applied Probability, 34 (2024),
  pp.~4455--4492.

\bibitem{sethi2024entropy}
{\sc D.~Sethi, D.~{\v{S}}i{\v{s}}ka, and Y.~Zhang}, {\em Entropy annealing for
  policy mirror descent in continuous time and space}, arXiv preprint
  arXiv:2405.20250,  (2024).

\bibitem{vsivska2024gradient}
{\sc D.~{\v{S}}i{\v{s}}ka and {\L}.~Szpruch}, {\em Gradient flows for
  regularized stochastic control problems}, SIAM Journal on Control and
  Optimization, 62 (2024), pp.~2036--2070.

\bibitem{stroock1997multidimensional}
{\sc D.~W. Stroock and S.~S. Varadhan}, {\em Multidimensional diffusion
  processes}, Springer, 1997.

\bibitem{sutton1998reinforcement}
{\sc R.~S. Sutton, A.~G. Barto, et~al.}, {\em Reinforcement learning: An
  introduction}, vol.~1, MIT press Cambridge, 1998.

\bibitem{wu2006elliptic}
{\sc Z.~Wu, J.~Yin, and C.~Wang}, {\em Elliptic and parabolic equations}, World
  Scientific Publishing Company, 2006.

\bibitem{zhan2023policy}
{\sc W.~Zhan, S.~Cen, B.~Huang, Y.~Chen, J.~D. Lee, and Y.~Chi}, {\em Policy
  mirror descent for regularized reinforcement learning: A generalized
  framework with linear convergence}, SIAM Journal on Optimization, 33 (2023),
  pp.~1061--1091.

\end{thebibliography}

\end{document}